\documentclass{article}

\usepackage[letterpaper,top=2cm,bottom=2cm,left=3cm,right=3cm,marginparwidth=1.75cm]{geometry}

\usepackage{amsmath}
\usepackage{amsthm}
\usepackage{amssymb}
\usepackage{amsfonts}
\usepackage{authblk}
\usepackage{bm}
\usepackage{graphicx}
\usepackage[colorlinks=true, allcolors=blue]{hyperref}
\usepackage{mathtools}
\usepackage[english]{babel}
\usepackage{needspace}
\usepackage{upgreek}

\newtheorem{theorem}{Theorem}[section]
\newtheorem{definition}[theorem]{Definition}
\newtheorem{corollary}[theorem]{Corollary}
\newtheorem{proposition}[theorem]{Proposition}
\newtheorem{lemma}[theorem]{Lemma}

\AtBeginEnvironment{theorem}{\Needspace{8\baselineskip}}
\AtBeginEnvironment{definition}{\Needspace{8\baselineskip}}
\AtBeginEnvironment{corollary}{\Needspace{8\baselineskip}}
\AtBeginEnvironment{proposition}{\Needspace{8\baselineskip}}
\AtBeginEnvironment{lemma}{\Needspace{8\baselineskip}}
\newcommand{\reals}{\mathbb{R}}

\newcommand{\mb}{\mathbf}
\DeclarePairedDelimiter{\norm}{\lVert}{\rVert} 
\DeclareMathOperator{\Col}{col}
\DeclareMathOperator{\rank}{rank}
\DeclareMathOperator{\Span}{span}

\title{Linear Systems and Eigenvectors in Constructive Mathematics}
\author[ ]{Bob Driessen \hspace{-0.25cm}}
\author[1]{Christiaan J. F. van de Ven}
\affil[1]{Radboud University, Institute for Mathematics, Astrophysics and Particle Physics,
Heyendaalseweg 135, 6525 AJ Nijmegen (The Netherlands)
}
\date{\today}

\begin{document}

\maketitle

\begin{abstract}
\noindent
In this work we study two classical problems of (numerical) linear algebra: (i) solving linear systems and (ii) computing eigenvectors, within a constructive framework.
Numerical accuracy and indeterminacy are naturally incorporated through Bishop-style constructive mathematics. 
Our contributions include new results on Gauss-Jordan elimination and on approximating the rank of a matrix. 
Additionally, we introduce a novel method for constructing approximate eigenvectors, based on a previously unexplored characterization of singular matrices.
\end{abstract}

\section{Introduction}

Many publications on constructive linear algebra exist.
The main references are \cite{CiM2} and \cite{ACiCA}, both of which are mainly based on Wim Ruitenburg's Ph.D. thesis \cite{InA}.
An older but almost forgotten article is \cite{UuIA} by Heyting. 
The article \cite{DoCMaaRoI} by Scedrov addresses a special case of matrix diagonalization. 
Its constructive results are used primarily as tools for studying classical function spaces through topological models. 
Nevertheless, the constructive approach to linear algebra presented in Scedrov's work is original.
Finally, \cite{CAoEPiCuNU} considers the calculation of eigenvalues and eigenvectors.

Another related line of research is linear algebra in computable analysis.
The book chapter \cite{FoITitEoNaMC} deals with solving linear equations, and \cite{CtDoLS} is about computing the dimension of a linear space.
While here the metalanguage is classical, the results have a flavor similar to constructive mathematics.

However, none of these works address Gauss-Jordan elimination. The present article fills this gap.
The main results of this work can be found in Section \ref{sec_gauss} and Section \ref{sec_eigen}. 
To reach these, we start with a basic introduction to constructive real numbers in Section \ref{sec_basic_def}. 
This also includes a proof of the equivalence of co-transitivity of equality with WLLPO.
Section \ref{sec_vector} subsequently introduces the theory of finite vector spaces.
Although this topic is also covered in \cite{CiM2}, there are some omissions in Troelstra and Van Dalen's treatment, e.g.
they do not mention the independent version of the Austauschsatz (Lemma \ref{l_independent_austausch}).
However, we need it to prove that a ranked matrix has a basis consisting of a subset of its own columns, as in \cite{UuIA}.
In Section \ref{sec_gauss} we consider Gauss-Jordan elimination.
The direct translation of this procedure into a constructive setting is equivalent to an omniscience principle (Proposition \ref{p_partial_pivoting_lpo}).
Gaussian elimination with partial pivoting, i.e. using only row operations, is not constructively valid.
However, as common in this field, a suitably adjusted version still holds in the form of Theorem \ref{t_gauss}.
To achieve this, we also allow column swaps, corresponding to full pivoting in numerical linear algebra. For general matrices, where $\rank A$ may not be computable, the best result we can achieve is an $\varepsilon$\textit{-reduced row echelon form} (Proposition \ref{p_epsilon_reduction}).

A key contribution of Section \ref{sec_eigen} is a novel construction of approximate eigenvalues.
From $\varepsilon$-reduced row echelon forms we obtain a new characterization of singular matrices, see Theorem \ref{t_det_zero}.
This characterization already implies the existence of $\varepsilon$-eigenvectors for every $\varepsilon > 0$.
However, establishing the linear independence of these $\varepsilon$-eigenvectors for sufficiently small $\varepsilon$ in Theorem \ref{t_eigen} requires additional work.

To help readers unfamiliar with \textit{constructive} mathematics, we have kept this article largely self-contained.
Proofs of known and/or elementary results are included.
However, this work is obviously not an introduction to constructive mathematics.
For the necessary context and more extensive explanations, the reader should consult \cite{ToCA}.
\section{Preliminaries} \label{sec_basic_def}
In this section we recall some basic facts.
Normal font variables like $a, b$ denote scalars in $\reals$.
Equality, addition, subtraction, multiplication and division of numbers in $\reals$ are defined and denoted just as in classical mathematics.
As is customary in constructive mathematics, $x \neq y$ does \textbf{NOT} mean $\neg (x = y)$.
Instead, it means that $x$ and $y$ are \textit{apart}, there is a rational $q > 0$ such that $| x - y | > q$.
This notion provides a positive formulation of inequality, which behaves similarly to classical inequality in most respects.
Although we generally cannot decide whether $x = 0$ or $x \neq 0$, for every $\varepsilon > 0$ we can assert that either $|x| < \varepsilon$, or $x \neq 0$. 
This weaker form of decidability is often sufficient for proofs where, classically, one would rely on the law of excluded middle $x = 0 \lor x \neq 0$. 
However, the classical formulation $\neg(x = y)$ lacks an important property called co-transitivity (see Lemma \ref{l_apartness_cotransitive}). 
We include a proof of this lemma here, as it does not appear with a full proof in the literature. 
The only reference available is \cite{CRM}, p. 45.
Although the result is fairly straightforward, its inclusion is meaningful. 
If the inequality of real numbers were co-transitive, then vectors in $\mathbb{R}^n$ would naturally inherit this property. 
In that case, one could redefine the notion of independence of vectors (Definition \ref{d_indep}) using inequality, which would fundamentally alter the character of the constructive theory of finite-dimensional vector spaces. 
Obviously this is not the case, which is an intuitive and expected result, but it still requires proof.

The other sections of this article do not rely on a specific construction of the real numbers.
Examples are Cauchy sequences of rational numbers or the construction in section 2.1 of \cite{ToCA}.
We simply assume that a suitable structure $\reals$ with the desired algebraic, topological, and ordering properties exists.

However, to prove Lemma \ref{l_apartness_cotransitive}, we need a concrete realization of $\reals$. 
For this purpose, we choose sequences of rational, closed intervals that are nested and dwindling. 
Rational numbers remain simple and well-behaved objects in constructive mathematics: their equality and ordering are \textit{decidable}, meaning that for any pair of rational numbers $a, b$, we have $a = b \lor a \neq b$. 
In other words, for rational numbers, $\neg(a = b)$ and $a \neq b$ are equivalent. 
We will use these properties extensively in the following definitions and proofs. 
The following definitions will be used throughout this paper.

\begin{definition}
Let $[a, b]$ and $[c ,d]$ be rational, closed intervals.
We say $[a, b]$ and $[c ,d]$ \textbf{coincide} if $[a ,b] \cap [c ,d]$ contains a rational number.
The \textbf{length} of a rational, closed interval $[a ,b]$ is $b - a$ and we denote this with $l([a, b])$.
\end{definition}

\begin{definition}
A sequence of rational, closed intervals $(x_n)_n$ is \textbf{nested} if $x_{n+1} \subseteq x_n$ for all $n$.
It is \textbf{dwindling} if for all $n$ there is an $m$ such that $l(x_{k}) < 1/2^n$ whenever $k \geq m$.
\end{definition}

\begin{definition} \label{d_real}
A \textbf{real number} $x$ is a sequence of nested and dwindling rational, closed intervals $(x_n)_n$. We define its endpoints by
\[
x_n^l := \min x_n, \qquad x_n^r := \max x_n.
\]
Two real numbers $x$ and $y$ are \textbf{equal} if $x_n$ and $y_n$ coincide for all $n$ and we write $x = y$.
We write $x < y$ if there is an $n$ such that $x_n^r < y_n^l$, and $x > y$ is defined as $y < x$.
If $x < y$ or $x > y$ then we say $x$ and $y$ are apart and we denote this with $x \neq y$.
The statement $x \leq y$ means $\neg (x > y)$ and likewise $x \geq y$ is defined as $y \leq x$.
\end{definition}

\begin{definition}[Addition]
Let $x$ and $y$ be two real numbers. 
Their \textbf{sum } $\mathbf{x} + \mathbf{y}$ is the real number defined by $(x + y)_n^l = x_n^l + y_n^l$ and $(x + y)_n^r = x_n^r + y_n^r$.
\end{definition}

\begin{definition}[Multiplication]
Let $x$ and $y$ be two real numbers. 
Their \textbf{product} $\mathbf{x}  \mathbf{y}$ is the real number defined by $(x  y)_n^l = \min(x_n^l  y_n^l, x_n^l  y_n^r, x_n^r  y_n^l, x_n^r  y_n^r)$ and $(x  y)_n^r = \max(x_n^l  y_n^l, x_n^l  y_n^r, x_n^r  y_n^l, x_n^r  y_n^r)$.
\end{definition}

\begin{definition}[Absolute value]
Let $x$ be a real number, then the real number $\mathbf{\lvert x \rvert}$ is defined by $\lvert x \rvert_n = x_n$ if $0 \in x_n$ and $\lvert x \rvert_n^l = \min(\lvert x_n^l \rvert, \lvert x_n^r \rvert )$ and $\lvert x \rvert_n^r = \max(\lvert x_n^l \rvert, \lvert x_n^r \rvert)$ otherwise.
\end{definition}

\begin{lemma} \label{l_eq_zero}
Let $x \in \mathbb{R}$ and suppose $\lvert x \rvert < q$ for all $q \in \mathbb{Q}^+$, then $x = 0$.
\end{lemma}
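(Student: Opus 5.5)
We must show that $x_n$ and $0_n$ coincide for every $n$. Here $0$ is represented by some nested dwindling sequence of intervals; the natural choice is the constant sequence $0_n=[0,0]$. So, for the constant representation, the goal is $0\in x_n$ for every $n$. For a general representation $(0_n)_n$ of zero I would instead show that $x_n \cap 0_n$ contains a rational, using the same argument below.

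The plan is a proof by contradiction that only ever negates a decidable statement about rationals, so it stays constructive. Fix $n$. Membership $0\in x_n$ is the conjunction $x_n^l\le 0\le x_n^r$ of two decidable rational inequalities. So it suffices to derive a contradiction from $x_n^l>0$, and symmetrically from $x_n^r<0$.

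Suppose $x_n^l>0$, and let $q=x_n^l$, a positive rational. Then $0\notin x_n$, so by the definition of absolute value, $|x|_n^l=\min(|x_n^l|,|x_n^r|)=x_n^l=q$. Since the intervals are nested, $x_k\subseteq x_n$ for all $k\ge n$, so $0\notin x_k$ and $|x|_k^l\ge q$ for all $k\ge n$. For $k<n$ we have $x_n\subseteq x_k$, so $|x|_k^r\ge q$ as well.

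By hypothesis, $|x|<q$, where $q$ is viewed as the constant real $[q,q]$. Unfolding the definition of $<$, there is an index $m$ with $|x|_m^r<q$. But every $|x|_k$ contains points of absolute value at least $q$:
\begin{itemize}
\item for $k\ge n$, all points of $|x|_k$ are $\ge q$;
\item for $k<n$, the interval $x_n\subseteq x_k$ contains $q$, so $|x|_k^r\ge q$.
\end{itemize}
Hence $|x|_m^r\ge q$ for every $m$, a contradiction. Thus $\neg(x_n^l>0)$, which by decidability gives $x_n^l\le 0$. The case $x_n^r<0$ is symmetric, using $|x_n^r|$. Therefore $0\in x_n$ for all $n$, and $x=0$.

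The main obstacle is bookkeeping with the interval definition of $|x|$, in particular the case $k<n$, where $0$ may lie in $x_k$ and then $|x|_k=x_k$ rather than the folded interval. This case is handled by the nesting observation above. One should also check that comparing with $q$ means comparing with the constant representation $[q,q]$.
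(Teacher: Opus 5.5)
Your proposal is correct and follows essentially the same route as the paper: use the decidability of $0 \in x_n$ for a rational interval, and refute $x_n^l > 0$ (and symmetrically $x_n^r < 0$) using the hypothesis on $\lvert x \rvert$. The only difference is cosmetic: you apply the hypothesis at $q = x_n^l$ and check $\lvert x \rvert_m^r \geq q$ at every index $m$ directly, while the paper applies it at $q = \frac{1}{2} x_m^l$ and observes $\lvert x \rvert > q$; your version has the small advantage of not needing the fact that $\lvert x \rvert < q$ and $\lvert x \rvert > q$ cannot both hold.
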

\begin{proof}
We have to show that the rational interval $x_n$ contains $0$ for all $n$.
So, let $m$ be an arbitrary index, then $x_m = [x_m^l, x_m^r]$ be a rational interval.
That means $0 \in x_m$ is decidable.
Thus, we can prove $0 \in x_m$  by deriving a contradiction from $\neg (0 \in x_m)$.
In this case, either $0 < x_m^l$ or $0 > x_m^r$ holds.
By symmetry, we only consider $0 < x_m^l$.
We then easily see $\lvert x \rvert > \frac{1}{2} x_m^l$.
This contradicts $\lvert x \rvert < q$ for all $q \in \mathbb{Q}^+$ and hence $0 \in x_m$ . 
\end{proof}

\begin{lemma}[Apartness is co-transitive] \label{l_apartness_cotransitive}
Let $x, y \in \mathbb{R}$. 
If $x \neq y$, then for every $z \in \reals$ we have $x \neq z$ or $y \neq z$.
\end{lemma}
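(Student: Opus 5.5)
By symmetry of $\neq$ we may assume $x < y$, so by Definition \ref{d_real} there is an index $n$ with $x_n^r < y_n^l$. Put $\delta := y_n^l - x_n^r > 0$, a positive rational. The plan is to locate $z$ at finite precision relative to the gap $[x_n^r, y_n^l]$ and then decide, using only rational comparisons, which of the two apartness statements holds.

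First, since $z$ is dwindling, I choose an index $m \geq n$ with $l(z_m) < \delta/2$. Because the sequences are nested and $m \geq n$, we have $x_m \subseteq x_n$ and $y_m \subseteq y_n$. Hence $x_m^r \leq x_n^r$ and $y_m^l \geq y_n^l$, so the gap at stage $m$ is at least $\delta$. Next I compare the rational endpoints of $z_m$ with the midpoint $c := (x_n^r + y_n^l)/2$; this is decidable because it only involves rationals. This gives two cases.

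\emph{Case} $z_m^l > c$. Then $z_m^l > x_n^r \geq x_m^r$, so $x_m^r < z_m^l$, which is precisely $x < z$ at index $m$. Thus $x \neq z$.

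\emph{Case} $z_m^l \leq c$. Then $z_m^r = z_m^l + l(z_m) < c + \delta/2 = y_n^l \leq y_m^l$, so $z_m^r < y_m^l$, i.e.\ $z < y$. Thus $y \neq z$.

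The case $y < x$ follows by swapping the roles of $x$ and $y$. I expect no real obstacle here. The only delicate points are to use one common index $m$, so that the comparison literally matches Definition \ref{d_real}, which requires the same index on both sides, and to make the case split on rational quantities, so that it is decidable and not an appeal to excluded middle on reals. Nestedness provides the first point and the dwindling property the second. Lemma \ref{l_eq_zero} is not needed.
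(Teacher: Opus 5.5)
Your proof is correct and takes the same route as the paper: fix the gap at index $n$, choose $z_m$ shorter than that gap, and make a decidable comparison of rationals to place $z_m$ on one side of it. Your version is more careful than the paper's, which compares $z_m$ directly with $x_n$ and $y_n$ at mixed indices; by taking $m \geq n$ and using nestedness, you get the single common index that Definition \ref{d_real} requires.
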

\begin{proof}
Without loss of generality, assume $x > y$.
This means there is an $n$ such that $x_n^l > y_n^r$.
Denote $q = x_n^l - y_n^r$, then $q > 0$ and we can find an $m$ such that $l(z_m) < q$.
We readily see that $z_m^r < x_n^l$ or $z_m^l > y_n^r$, as $l(z_m) < q$ implies that the interval $z_m$ can not overlap with both $x_m$ and $y_m$.
\end{proof}

Often in constructive mathematics we want to show that some result from classical mathematics is not acceptable constructively.
To prove this, we always apply the same recipe.
We show that the statement implies an unconstructive statement of a very simple kind.
Or something equivalent to such a simple statement.
These simple statements always involve the most basic yet infinite structures, sequences of zeros and ones.
An example is the statement `any sequence of zeroes and ones either contains a one, or every entry is zero'.
We can \textit{not} disprove this statement constructively.
It does seem unacceptable to include it in a constructive proof.
We can not constructively give either the entry of the first one, or prove that no such entry exists.
There can be no algorithm guaranteed to decide this problem in a finite time.
A being capable of deciding these problems has godlike powers.
This is why Bishop called these `omniscience' principles. 
Below we collect some of the more common of these principles.

\begin{definition}[Cantor space] \label{d_Cantor}
The set $\mathcal{C}$ or \textbf{Cantor space} consists of the infinite sequences of zeroes and ones.
The entries of an element $\alpha \in \mathcal{C}$ are denoted as $\alpha_n$, so $\alpha = (\alpha_n)_{n \in \mathbb{N}}$.
\end{definition}

\begin{definition}[LPO] \label{d_LPO}
The \textbf{Limited Principle of Omniscience} (LPO) is the statement
`Let $\alpha \in \mathcal{C}$, then either $\forall n: \alpha_n = 0$ or $\exists n: \alpha_n = 1$.'.
\end{definition}

\begin{definition} \label{d_LLPO}
The \textbf{Lesser Limited Principle of Omniscience} (LLPO) is the statement
`Let $\alpha \in \mathcal{C}$ contain at most one $1$, then either $\forall n: \alpha_{2n} = 0$ or $\forall n: \alpha_{2n + 1} = 0$.'.
\end{definition}

\begin{definition} \label{d_WLLPO}
The \textbf{Weaker Lesser Limited Principle of Omniscience} (WLLPO) is the statement
`Let $\alpha \in \mathcal{C}$ contain at most one $1$ and $\neg (\forall \alpha_n = 0)$, then either $\forall n: \alpha_{2n} = 0$ or $\forall n: \alpha_{2n + 1} = 0$.'.
\end{definition}

Having defined and explained the role of principles as WLLPO in constructive mathematics, we now state the main result of this section.
\clearpage
\begin{proposition}[Inequality can not be proven to be co-transitive]  \label{p_inequality_not_cotransitive}
Let $T$ be the statement `For all $x, y \in \mathbb{R}$, if $\neg (x = y)$, then for every $z \in \reals$ we have $\neg (x = z)$ or $\neg (y = z)$.'.
Then $T$ is equivalent to WLLPO. 
\end{proposition}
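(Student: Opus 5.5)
The plan is to prove the two implications separately. In both directions we use that coincidence of rational intervals and the values $\alpha_n$ are decidable. We also use the elementary facts that equality of reals is an equivalence relation and that non-coincidence of $x_n$ and $z_n$ for some $n$ already gives $\neg(x=z)$.

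\textbf{WLLPO implies $T$.} Let $x,y,z\in\reals$ with $\neg(x=y)$.
\begin{itemize}
\item Define decidable binary sequences $\beta_n=1$ iff $x_n$ and $z_n$ do not coincide, and $\gamma_n=1$ iff $y_n$ and $z_n$ do not coincide.
\item Form $\alpha\in\mathcal{C}$ with at most one $1$. Put $\alpha_{2n}=1$ iff $n$ is the least index with $\beta_n=1$ and $\gamma_k=0$ for all $k\le n$. Put $\alpha_{2n+1}=1$ iff $n$ is the least index with $\gamma_n=1$ and $\beta_k=0$ for all $k<n$, so that ties go to the even position.
\item Check that $\neg(\forall n:\alpha_n=0)$. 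If all $\alpha_n$ were $0$, then all $\beta_n$ and $\gamma_n$ would be $0$. That means $x=z$ and $y=z$, hence $x=y$, contradicting the hypothesis.
\item Apply WLLPO. Suppose all odd entries vanish, and assume $x=z$. Then all $\beta_n=0$, so all even entries vanish too. Hence $\alpha$ is identically zero, which is a contradiction, so $\neg(x=z)$.
\item Symmetrically, if all even entries vanish, assume $y=z$. Then all $\gamma_n=0$, which kills the odd entries, and we get $\neg(y=z)$.
\end{itemize}

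\textbf{$T$ implies WLLPO.} Let $\alpha$ contain at most one $1$ and satisfy $\neg(\forall n:\alpha_n=0)$.
\begin{itemize}
\item Define reals $x=\sum_n \alpha_{2n}2^{-n}$ and $y=-\sum_n\alpha_{2n+1}2^{-n}$, and take $z=0$.
\item Realize these concretely as nested dwindling intervals, for instance $x_n=[s_n,s_n+2^{-n}]$ with $s_n$ the $n$-th partial sum. This is legitimate because at most one term is nonzero, and the sign convention for $y$ is handled analogously.
\item Show $\neg(x=y)$. If $x=y$, then, since $x\ge 0\ge y$, both are $0$. So every $\alpha_n=0$, each $\alpha_n$ being decidable and a $1$ would make $x$ or $y$ apart from $0$. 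This contradicts the hypothesis on $\alpha$.
\item Now $T$ yields $\neg(x=0)$ or $\neg(y=0)$.
\item In the first case, to show $\forall n:\alpha_{2n+1}=0$, fix $n$ and decide $\alpha_{2n+1}$. If it equals $1$, then by uniqueness all even entries are $0$, so $x=0$, which is a contradiction. Hence $\alpha_{2n+1}=0$.
\item The second case symmetrically gives $\forall n:\alpha_{2n}=0$.
\end{itemize}

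The main delicacy is the first direction. The encoding must guarantee at most one $1$ while still letting a vanishing parity class refute the correct equality. Treating simultaneous first occurrences consistently, by giving ties to the even position, is what makes both refutations go through. The rest is routine bookkeeping with decidable properties of rational intervals.
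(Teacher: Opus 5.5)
Your proof is correct and follows essentially the same strategy as the paper in both directions. For $T \Rightarrow$ WLLPO you build three reals encoding the parity of the unique $1$; your choice $z=0$, with $x$ and $y$ carrying the even and odd entries, is only a cosmetic variant of the paper's $\pm 2^{-m}$ construction. For the converse you interleave the first non-coincidences of $x_n,z_n$ and $y_n,z_n$ into one sequence and feed it to WLLPO, as the paper does. Your way of finishing that converse direction is in fact the correct one: you refute $y=z$ (resp.\ $x=z$) because it would force $\alpha$ to vanish identically. The paper instead claims that vanishing even entries make $x_n$ and $z_n$ coincide for all $n$, which does not follow, since after a $1$ at an odd position all later even entries are $0$ regardless of whether $x_n$ and $z_n$ coincide.
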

\begin{proof}
First, suppose $T$ holds.
Let $\alpha \in \mathcal{C}$ be such that $\alpha$ contains at most one $1$ and $\neg (\alpha = 0)$.
Define three real numbers $x$, $y$ and $z$ as follows:
\[
x_n = \begin{cases} 
[\frac{-1}{2^n}, \ \frac{1}{2^n}] & \text{if } \alpha_k = 0 \text{ for all } k < n \\
[\frac{1}{2^m}, \ \frac{1}{2^m}] & \text{if } \text{there is a } k < n \text{ such that } \alpha_k = 1 \text{ and } m \text{ is the least such } k \\
\end{cases}
\]
\[
y_n = \begin{cases} 
[\frac{-1}{2^n}, \ \frac{1}{2^n}] & \text{if } \alpha_k = 0 \text{ for all } k < n \\
[\frac{-1}{2^m}, \ \frac{-1}{2^m}] & \text{if } \text{there is a } k < n \text{ such that } \alpha_k = 1 \text{ and } m \text{ is the least such } k \\
\end{cases}
\]
\[
 z_n = \begin{cases} 
[\frac{-1}{2^n}, \ \frac{1}{2^n}] & \text{if } \alpha_k = 0 \text{ for all } k < n \\
[\frac{1}{2^m}, \ \frac{1}{2^m}] & \text{if } \text{there is a } k < n \text{ such that } \alpha_k = 1 \text{ and the least such } m \text{ is even } \\
[\frac{-1}{2^m}, \ \frac{-1}{2^m}] & \text{if } \text{there is a } k < n \text{ such that } \alpha_k = 1 \text{ and the least such } m \text{ is odd. } \\
\end{cases}
\]
Then $\neg (x = y)$ follows easily.
The assumption $x = y$ leads to $\forall n: \alpha_n = 0$, a contradiction.
Next, suppose that we can prove $\neg (x = z) \lor \neg (y = z)$.
By symmetry, we only consider $\neg (x = z)$.
Suppose $k$ satisfies $\alpha_k = 1$, then $k$ must be odd.
For if $k$ were even, then the construction of $x$ and $z$ implies $x = z$.
This contradicts $\neg (x = z)$, so we have shown $\alpha_n = 0$ for all even $n$.
\\\\
Next, assume WLLPO and let $x$ and $y$ be real numbers such that $\neg (x = y)$.
Given some other real number $z$, construct $\alpha \in \mathcal{C}$ as follows.
For all $n$ we only have $\alpha_{2n} = 1$ if $\alpha_m = 0$ for $m < n$ and $z_n$ and $x_n$ do not overlap.
Similarly $\alpha_{2n + 1} = 1$ if $\alpha_m = 0$ for $m < n$ and $y_n$ and $x_n$ do not overlap.
This construction ensures that $\alpha$ has at most one $1$.

We now prove that $\forall_n \alpha_n = 0$ is contradictory.
In this case, $z_n$ overlaps with both $x_n$ and $y_n$ for all $n$.
This implies $\lvert x - y \rvert < l(z_n)$ for all $n$ and Lemma \ref{l_eq_zero} then yields $x = y$.
Thus, we have a contradiction.
The sequence $\alpha \in \mathcal{C}$ therefore satisfies the conditions of WLLPO and we get $\alpha_n = 0$ for all even $n$ or $\alpha_n = 0$ for all odd $n$.
By symmetry, we only look at the first case.
The construction of $\alpha$ implies that $x_n$ and $z_n$ overlap for all $n$.
This means $x = z$ and therefore $\neg(z = y)$.
\end{proof}
\section{Vector spaces} \label{sec_vector}
We assume familiarity with basic notation and definitions of vector spaces in classical mathematics.
Boldface letters like $\mb{x}, \mb{y}, \mb{z}, \mb{a}_i$ and the Greek letters $\alpha, \beta, \gamma$ denote vectors in $\reals^n$. 
The symbol $\mb{n}$ denotes a vector whose entries are all equal to the number $n$, for example $\mb{0}=(0,0,\dots,0)$ denotes the zero vector. 
To refer to the entries of a vector, we write
\[
\mathbf{x} = 
\begin{vmatrix}
(\mb{x})_1 \\
(\mb{x})_2 \\
\vdots \\
(\mb{x})_n
\end{vmatrix}
\
\beta = 
\begin{vmatrix}
\beta_1 \\
\beta_2 \\
\vdots \\
\beta_n
\end{vmatrix}
\]
Vectors in $\reals^n$ inherit equality, addition, and subtraction from the real numbers as in classical mathematics.
Scalar multiplication also provides no difficulties.
Again, $\mb{x} \neq \mb{y}$ denotes apartness, so there is a $1 \leq i \leq n$ such that $x_i$ and $y_i$ are apart.
The \textit{span} of a set of vectors $X$ is denoted by $\Span X$.
If $Y$ is a set of vectors and $\Span X \subseteq Y$, then we say that $X$ \textit{spans} $Y$.
We start with some basic facts about vector addition and scalar multiplication.

\begin{lemma} \label{l_beta_entry_apart_zero}
Let $\mb{x}_1,...,\mb{x}_n$ be vectors in $\reals^m$.
Suppose $\beta \in \reals^m$ is such that $\sum_i \beta_i \mb{x}_i \neq \mb{0}$.
Then there is a $1 \leq j \leq n$ with $\beta_j \neq 0$.
\end{lemma}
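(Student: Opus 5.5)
Note first that $\beta$ must have at least $n$ entries for $\sum_i \beta_i \mb{x}_i$ to make sense, so only $\beta_1,\dots,\beta_n$ matter. The plan is to turn the positive information $\sum_i \beta_i \mb{x}_i \neq \mb{0}$ into a positive lower bound, split that bound over the $n$ summands, and then pick a summand constructively using the approximate decidability $|x| < \varepsilon \lor x \neq 0$ recalled in Section~\ref{sec_basic_def}.

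First, by the definition of apartness of vectors, there is a coordinate $1 \le k \le m$ with $\bigl(\sum_i \beta_i \mb{x}_i\bigr)_k \neq 0$. That is, $\bigl|\sum_i \beta_i (\mb{x}_i)_k\bigr| > q$ for some rational $q>0$. Next choose a rational $M>0$ with $|(\mb{x}_i)_k| < M$ for all $i$. Such an $M$ exists because every real number is bounded by a rational; in the interval model one can take an endpoint bound of the first interval.

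Now for each $j$ decide $|\beta_j| < q/(nM)$ or $\beta_j \neq 0$. This is a finite number of approximate decisions, so it is constructively valid; concretely, one compares a sufficiently fine rational interval approximation of $|\beta_j|$ with $q/(2nM)$ and $q/(nM)$. If some $j$ gives $\beta_j \neq 0$, we are done. Otherwise $|\beta_j| < q/(nM)$ for every $j$, and the triangle inequality gives
\[
\Bigl|\sum_i \beta_i (\mb{x}_i)_k\Bigr| \le \sum_i |\beta_i|\,|(\mb{x}_i)_k| < n\cdot \frac{q}{nM}\cdot M = q,
\]
which contradicts the choice of $q$. Since the decision procedure always returns one of its two disjuncts for each $j$, the contradiction rules out the second branch. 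So the search must produce a $j$ with $\beta_j \neq 0$, and this is a genuinely constructive witness.

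The only delicate step is making sure we never argue ``not all $\beta_j = 0$, hence some $\beta_j \neq 0$'', which is not valid constructively. The quantitative bound through $q$ and $M$ is what avoids this, so the real care lies in fixing $M$ and the threshold $q/(nM)$ before making the approximate decisions.
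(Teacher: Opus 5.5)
Your proof is correct. It shares the paper's first step, extracting a coordinate $k$ with $\sum_i \beta_i (\mb{x}_i)_k \neq 0$, but organizes the second step differently.

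The paper cites two standard apartness facts about $\reals$:
\begin{enumerate}
\item A finite sum apart from $0$ has a summand apart from $0$ (a consequence of co-transitivity, Lemma~\ref{l_apartness_cotransitive}). This gives a $j$ with $\beta_j(\mb{x}_j)_k \neq 0$.
\item $xy \neq 0$ implies $x \neq 0$. This gives $\beta_j \neq 0$.
\end{enumerate}

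You skip the intermediate products $\beta_j(\mb{x}_j)_k$ and decide directly on the coefficients $\beta_j$. You fix a rational bound $M$ on the entries $(\mb{x}_i)_k$ and use the threshold $q/(nM)$, so the ``all small'' branch contradicts the lower bound $q$ via the triangle inequality. In effect you prove both cited facts at once inside a single estimate.

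The paper's version is shorter and modular, treating these facts as assumed background on $\reals$. Yours is self-contained and uses only the approximate decidability $|x|<\varepsilon \lor x \neq 0$. It also makes the computational content explicit: your concrete interval comparison yields an explicit lower bound such as $|\beta_j| > q/(2nM)$ for the coefficient it finds.

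Your remark that $\beta$ should live in $\reals^n$ rather than $\reals^m$ correctly identifies a typo in the statement.
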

\begin{proof}
Since $\sum_i \beta_i \mathbf{x}_i \neq \mathbf{0}$, there exists an index $k$ such that 
$(\sum_i \beta_i \mathbf{x}_i)_k \neq 0$. 
Hence, $\sum_i \beta_i (\mathbf{x}_i)_k \neq 0$. 
Since $x + y \neq 0$ implies $x \neq 0$ or $y \neq 0$ for real numbers, we can find a $j$ such that $\beta_j (\mb{x}_j)_k \neq 0$.
Similarly, $xy \neq 0$ implies $x \neq 0$ and $y \neq 0$. 
We conclude $\beta_j \neq 0$.
\end{proof}

\begin{lemma}[Apartness is co-transitive] \label{l_apartness_cotransitive_vec}
Let $\mb{x}, \mb{y} \in \reals^n$. 
If $\mb{x} \neq \mb{y}$, then for every $\mb{z} \in \reals^n$ we have $\mb{x} \neq \mb{z}$ or $\mb{y} \neq \mb{z}$.
\end{lemma}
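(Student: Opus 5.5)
The plan is to reduce the vector statement to the scalar version, Lemma \ref{l_apartness_cotransitive}, applied to a single coordinate. By the definition of apartness of vectors in $\reals^n$, the hypothesis $\mb{x} \neq \mb{y}$ means that we can exhibit an index $1 \leq i \leq n$ such that the real numbers $(\mb{x})_i$ and $(\mb{y})_i$ are apart. We fix this index $i$; this is the only information the hypothesis provides, and it suffices.

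Given an arbitrary $\mb{z} \in \reals^n$, we consider its $i$-th entry $(\mb{z})_i$ and apply Lemma \ref{l_apartness_cotransitive} to the three real numbers $(\mb{x})_i$, $(\mb{y})_i$ and $(\mb{z})_i$. It yields $(\mb{x})_i \neq (\mb{z})_i$ or $(\mb{y})_i \neq (\mb{z})_i$. Because this disjunction is constructive, we actually know which case holds. In the first case the index $i$ witnesses $\mb{x} \neq \mb{z}$, again by the definition of vector apartness. In the second case the same index witnesses $\mb{y} \neq \mb{z}$. Either way we obtain the desired disjunction $\mb{x} \neq \mb{z} \lor \mb{y} \neq \mb{z}$.

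There is no real obstacle here. The one point that needs care is that vector apartness must be read in its positive form, namely as the existence of a specific coordinate in which the vectors are apart. It must not be read as the negation of equality. With that reading the witnessing index is available explicitly, and no search over coordinates or decision procedure is needed. All of the constructive content is carried by Lemma \ref{l_apartness_cotransitive}.
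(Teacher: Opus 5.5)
Your proposal is correct and is essentially the paper's proof, which simply applies the scalar co-transitivity Lemma~\ref{l_apartness_cotransitive} coordinate-wise. You make explicit that only the single coordinate witnessing $\mb{x} \neq \mb{y}$ is needed.
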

\begin{proof}
This is a direct consequence of Lemma \ref{l_apartness_cotransitive} applied coordinate-wise.
\end{proof}

The classical notion of \textit{independence} needs a stronger formulation constructively.
This, of course, has everything to do with Proposition \ref{p_inequality_not_cotransitive}.
Co-transitivity is a fundamental attribute of apartness.
If we were to define independence using the weaker definition $\sum_i \beta_i \mb{x}_i = \mb{0}$ implies $\beta = \mb{0}$, then many normal properties of independence would not hold.
In \cite{CiM2} what we call \textit{independent} is referred to as \textit{free}.
With \textit{independent} Troelstra and van Dalen meant what we refer to as \textit{weak independence}.
Their nomenclature did not stick, as we usually want to denote the most natural analogue of a classical definition by its classical name.
For this reason we use the more standard constructive definition of independence as can be found in \cite{ToCA} for instance.

\begin{definition}\label{d_indep}
The vectors $\mb{x}_1, \mb{x}_2,...,\mb{x}_m$ are \textbf{independent} if for all $\beta \in \reals^m$, $\beta \neq \mathbf{0}$ implies $\sum_{i} \beta_i \mb{x}_i \neq 0$.
If $X$ is a set of independent vectors, then we will also call $X$ itself \textbf{independent}.
Finally, a vector $\mathbf{x} \in \mathbb{R}^n$ is said to be \textbf{independent from} $\mathbf{x}_1, \dots, \mathbf{x}_m$ if $\mathbf{x} \neq \sum_{i=1}^m \beta_i \mathbf{x}_i \quad \text{for all } \beta \in \mathbb{R}^m
$.
\end{definition}

\begin{lemma} \label{l_weakly_indep_equi_unique_repr}
If the set of vectors $X = \{ \mb{x}_1, \mb{x}_2,...,\mb{x}_m \}$ is independent, then every vector in $\Span X$ has a unique representation as a sum of vectors from $X$.
\end{lemma}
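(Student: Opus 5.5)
We must show uniqueness. Existence of a representation is immediate from the definition of $\Span X$, so the content is uniqueness. Suppose $\mb{v} \in \Span X$ has two representations $\mb{v} = \sum_i \beta_i \mb{x}_i = \sum_i \gamma_i \mb{x}_i$ with $\beta, \gamma \in \reals^m$. We want $\beta = \gamma$, i.e.\ $\beta_i = \gamma_i$ for every $i$. The difficulty is that constructively we cannot case-split on $\beta - \gamma = \mb{0}$ versus $\beta - \gamma \neq \mb{0}$. We can, however, use that equality of reals is the negation of apartness: $x = y$ holds exactly when $\neg(x \neq y)$. This follows from Definition \ref{d_real}: if $x_n$ and $y_n$ fail to coincide for some $n$ then $x \neq y$, and conversely apartness yields such an $n$ since the intervals are nested.

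The plan is therefore to fix an index $j$ and derive a contradiction from $\beta_j \neq \gamma_j$. Put $\delta = \beta - \gamma$. If $\beta_j \neq \gamma_j$, then $\delta_j \neq 0$, so $\delta \neq \mb{0}$ as a vector in $\reals^m$. Independence of $X$ (Definition \ref{d_indep}) then gives $\sum_i \delta_i \mb{x}_i \neq \mb{0}$. On the other hand, linearity gives
\[
\sum_i \delta_i \mb{x}_i = \sum_i \beta_i \mb{x}_i - \sum_i \gamma_i \mb{x}_i = \mb{v} - \mb{v} = \mb{0}.
\]
Apartness from $\mb{0}$ contradicts equality with $\mb{0}$, since $x \neq x$ is impossible for reals (irreflexivity of apartness, immediate from the definition because $x_n^r < x_n^l$ cannot hold). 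Hence $\neg(\beta_j \neq \gamma_j)$, so $\beta_j = \gamma_j$. As $j$ was arbitrary, $\beta = \gamma$.

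The only step needing care is the constructive fact $\neg(x \neq y) \Rightarrow x = y$. With the interval model this is direct: if $x_n$ and $y_n$ did not coincide for some $n$, decidability of order on rationals would give $x_n^r < y_n^l$ or $y_n^r < x_n^l$, hence $x \neq y$. So for each $n$ it is not the case that the intervals fail to coincide, and since coincidence of rational intervals is decidable, they coincide. Everything else is routine algebra.
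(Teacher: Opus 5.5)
Your proof is correct and is essentially the paper's argument. Both subtract the two representations, assume some coefficient difference $\beta_j - \gamma_j$ is apart from $0$, and use independence to contradict $\sum_i (\beta_i - \gamma_i)\mb{x}_i = \mb{0}$. Your only addition is that you spell out the step $\neg(x \neq y) \Rightarrow x = y$ in the interval model, which the paper uses implicitly.
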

\begin{proof}
Let $X$ be independent and suppose that $\sum_i \beta_i \mb{x}_i = \sum_i \lambda_i \mb{x}_i$.
Then $\sum_i (\beta - \lambda)_i \mb{x}_i = \mb{0}$ and we shall prove $\beta_i - \lambda_i = \mb{0}$ for every $i$ by contradiction.
The assumption $\beta_j - \lambda_j \neq \mb{0}$ for some $j$ leads to $\sum_i (\beta - \lambda)_i \mb{x}_i \neq \mb{0}$ by the independence of $X$.
\end{proof}

\begin{lemma} \label{l_indep_add}
Let $\mb{x}_1,...,\mb{x}_n$ be a set of independent vectors and suppose $\mb{x}_0$ is independent from these.
Then the set $\mb{x}_0, \mb{x}_1,...,\mb{x}_n$ is also independent.
\end{lemma}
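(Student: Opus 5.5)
We must show: for $\beta=(\beta_0,\dots,\beta_n)\neq \mb{0}$, the sum $\sum_{i=0}^n \beta_i\mb{x}_i \neq \mb{0}$. The plan is to split into cases on which coordinate of $\beta$ is apart from zero. Avoiding a split on whether $\beta_0 = 0$ is the point, since that is undecidable.

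Since $\beta \neq \mb{0}$, there is an index $j$ with $\beta_j \neq 0$.

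\textbf{Case $j = 0$.} Since $\beta_0 \neq 0$, it is invertible. Put $\lambda_i = -\beta_i/\beta_0$ for $1\le i\le n$. Because $\mb{x}_0$ is independent from $\mb{x}_1,\dots,\mb{x}_n$ (Definition \ref{d_indep}), we have $\mb{x}_0 \neq \sum_{i\ge1}\lambda_i\mb{x}_i$. Hence $\mb{x}_0 - \sum_{i \ge 1} \lambda_i \mb{x}_i \neq \mb{0}$. Multiplying by $\beta_0 \neq 0$ preserves apartness from $\mb{0}$ (a product of numbers apart from $0$ is apart from $0$, coordinatewise). This gives $\sum_{i=0}^n \beta_i\mb{x}_i\neq\mb{0}$.

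\textbf{Case $j \geq 1$.} Here $(\beta_1,\dots,\beta_n)\neq\mb{0}$, so independence of $\mb{x}_1,\dots,\mb{x}_n$ gives $\mb{w} := \sum_{i\ge1}\beta_i\mb{x}_i \neq \mb{0}$. We want $\beta_0\mb{x}_0+\mb{w}\neq\mb{0}$, but we cannot decide whether $\beta_0=0$. This is the main obstacle, and co-transitivity resolves it. By Lemma \ref{l_apartness_cotransitive_vec} applied to $\mb{w}\neq\mb{0}$ with third point $-\beta_0\mb{x}_0$, either $\mb{w} \neq -\beta_0\mb{x}_0$ or $\mb{0}\neq -\beta_0\mb{x}_0$.

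In the first subcase, $\mb{w} \neq -\beta_0\mb{x}_0$ directly gives $\sum_{i=0}^n\beta_i\mb{x}_i \neq \mb{0}$.

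In the second subcase, $\beta_0\mb{x}_0\neq\mb{0}$. Lemma \ref{l_beta_entry_apart_zero}, applied with the single vector $\mb{x}_0$, yields $\beta_0\neq 0$, and we are back in the case $j=0$ already handled.

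The only delicate point is this use of co-transitivity to replace the classical case distinction $\beta_0 = 0 \lor \beta_0 \neq 0$. The remaining steps are routine manipulations with apartness: scaling by an invertible scalar, and translating $\mb{a}\neq\mb{b}$ into $\mb{a}-\mb{b}\neq\mb{0}$.
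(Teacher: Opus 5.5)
Your proof is correct and follows essentially the same route as the paper. You split on an index $j$ with $\beta_j \neq 0$, use the independence of $\mb{x}_0$ when $j = 0$, and for $j \geq 1$ apply co-transitivity (Lemma \ref{l_apartness_cotransitive_vec}) to either conclude directly or return to the case $\beta_0 \neq 0$; your explicit use of Lemma \ref{l_beta_entry_apart_zero} to extract $\beta_0 \neq 0$ from $\beta_0 \mb{x}_0 \neq \mb{0}$ simply fills in a step the paper leaves implicit.
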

\begin{proof}
Let $\beta \neq \mb{0}$, we will prove $\sum_{i \geq 0} \beta_i \mb{x}_i \neq \mb{0}$. Since $\beta \neq \mb{0}$, there exists $j$ with $\beta_j \neq 0$. 
We distinguish two cases: either $j = 0$ or $j > 0$.
If $j = 0$, then  $\sum_{i \geq 0} \beta_i \mb{x}_i \neq \mb{0}$ follows from $\beta_0 \mb{x}_0 \neq - \sum_{i > 0} \beta_i \mb{x}_i$.
We obtain the latter apartness because $\mb{x}_0$ is independent from $\mb{x}_1,...,\mb{x}_n$.
In particular, this implies $\mb{x}_0 \neq \sum_{i > 0} -\beta_0^{-1} \beta_i \mb{x}_i$.
If $j > 0$, then we know $\sum_{i > 0} \beta_i \mb{x}_i \neq \mb{0}$ because $\mb{x}_1,...,\mb{x}_n$ are independent.
This means either $\beta_0 \mb{x}_0 \neq \mb{0}$ or $\beta_0 \mb{x}_0 \neq -\sum_{i > 0} \beta_i \mb{x}_i$ by Lemma \ref{l_apartness_cotransitive_vec}.
The second of these possibilities directly implies $\sum_i \beta_i \mb{x}_i \neq \mb{0}$.
If $\beta_0 \mb{x}_0 \neq \mb{0}$, then we know $\beta_0 \neq \mb{0}$.
Thus we are back at the case $j = 0$, which we already covered.
\end{proof}

As in classical mathematics, the Steinitz exchange lemma or Austauschsatz (Theorem \ref{t_steinitz}) plays a central role in finite dimensional vector spaces.
We need it to prove that any two bases of such vector spaces contain the same number of elements.
This is the content of Proposition \ref{p_basis_def}.
To get to this propositions, we first need to derive several Austauschsatz like results.

\begin{lemma}[Austauschlemma] \label{l_austausch}
Let $X = \{ \mb{x_1},...,\mb{x_n} \}$ be a set of vectors in $\reals^m$.
Suppose $\mb{y} \in \Span{X}$ and $\mb{y} \neq 0$.
Then there is an $1 \leq j \leq n$ such that swapping $\mb{x}_j$ for $\mb{y}$ in $X$ yields a set of vectors $Z$ whose span equals $\Span{X}$.
\end{lemma}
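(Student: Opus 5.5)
Since $\mb{y} \in \Span X$, I fix a representation $\mb{y} = \sum_{i=1}^n \beta_i \mb{x}_i$ with $\beta \in \reals^n$. The hypothesis $\mb{y} \neq \mb{0}$ says exactly that $\sum_i \beta_i \mb{x}_i \neq \mb{0}$, so Lemma \ref{l_beta_entry_apart_zero} gives an index $1 \leq j \leq n$ with $\beta_j \neq 0$. This is the only genuinely constructive step. We cannot decide which coefficients are zero, but apartness of the sum from $\mb{0}$ hands us a coefficient that is apart from zero. Hence $\beta_j^{-1}$ exists as a real number. I take this $j$ and set $Z = (X \setminus \{\mb{x}_j\}) \cup \{\mb{y}\}$, read as the list $\mb{x}_1,\dots,\mb{x}_{j-1},\mb{y},\mb{x}_{j+1},\dots,\mb{x}_n$, so that repetitions cause no trouble.

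Next I show $\Span Z \subseteq \Span X$. Every $\mb{x}_i$ with $i \neq j$ lies in $X$, and $\mb{y} \in \Span X$ by hypothesis. So any linear combination of elements of $Z$ becomes a linear combination of elements of $X$ after substituting $\mb{y} = \sum_i \beta_i \mb{x}_i$ and collecting coefficients. This uses only ring arithmetic in $\reals$.

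For the converse inclusion $\Span X \subseteq \Span Z$, it suffices to express $\mb{x}_j$ in terms of $Z$. Solving the representation of $\mb{y}$ for $\mb{x}_j$ gives
\[
\mb{x}_j = \beta_j^{-1}\mb{y} - \sum_{i \neq j} \beta_j^{-1}\beta_i \mb{x}_i ,
\]
which is valid because $\beta_j \neq 0$. Then an arbitrary $\sum_i \lambda_i \mb{x}_i$ is rewritten by substituting this expression for $\mb{x}_j$, which yields a combination of elements of $Z$. The two inclusions together give $\Span Z = \Span X$.

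The main obstacle is only the choice of $j$. Classically one picks any nonzero coefficient, but constructively that requires a coefficient \emph{apart} from zero, and this is supplied by Lemma \ref{l_beta_entry_apart_zero}. The rest is routine algebra that carries over verbatim from the classical proof.
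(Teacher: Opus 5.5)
Your proof is correct and follows the paper's argument: fix a representation of $\mb{y}$, use Lemma \ref{l_beta_entry_apart_zero} to get a coefficient $\beta_j \neq 0$, solve for $\mb{x}_j$, and check both inclusions. Your formula $\mb{x}_j = \beta_j^{-1}\mb{y} - \sum_{i \neq j}\beta_j^{-1}\beta_i\mb{x}_i$ has the correct sign, whereas the paper's displayed expression for $\mb{x}_1$ is off by an overall sign; this is harmless for the span argument.
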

\begin{proof}
Let $\beta \in \reals^m$ be such that $ \sum_{i} \beta_i \mb{x}_i = \mb{y}$. 
From $\mb{y} \neq \mb{0}$ and Lemma \ref{l_beta_entry_apart_zero} we find a $j$ such that $\beta_j \neq 0$.
For notational convenience, assume that $j = 1$.
Thus, we can write $\mb{x}_1 = \sum_{i \neq 1} \beta_i  \beta_1^{-1} \mb{x}_i -\beta_1^{-1} \mb{y}$. \newline
Let $Z$ be the set of vectors of $X$ but with $\mb{x_1}$ swapped for $\mb{y}$.
The expression for $\mb{x}_1$ proves $\mb{x}_1 \in \Span{Z}$ and $\Span{X} \subseteq \Span{Z}$ immediately follows.
Because $\mb{y}$ is a linear combination of $\mb{x_1},...,\mb{x_n}$, any linear combination of vectors from $Z$ is a linear combination of vectors of $X$.
This shows $\Span{Z} \subseteq \Span{X}$ and hence $\Span{X} = \Span{Z}$.
\end{proof}

\begin{lemma}[Independent Austauschlemma] \label{l_independent_austausch}
Let $X$, $Z$ and $\mb{y}$ be as in the Austauschlemma \ref{l_austausch}.
Moreover, suppose that the vectors in $X$ are independent.
Then $Z$ is also a set of independent vectors.
\end{lemma}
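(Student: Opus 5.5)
We keep the notation of the proof of the Austauschlemma \ref{l_austausch}. Write $\mb{y} = \sum_i \beta_i \mb{x}_i$ and choose $j$ with $\beta_j \neq 0$ by Lemma \ref{l_beta_entry_apart_zero}. Relabel so that $j = 1$, so $Z = \{\mb{y}, \mb{x}_2, \dots, \mb{x}_n\}$. We must show that for every $\lambda \in \reals^n$ with $\lambda \neq \mb{0}$,
\[
\lambda_1 \mb{y} + \sum_{i \geq 2} \lambda_i \mb{x}_i \neq \mb{0}.
\]
The plan is to substitute the expression for $\mb{y}$ and transfer the question to $X$. This gives
\[
\lambda_1 \mb{y} + \sum_{i \geq 2} \lambda_i \mb{x}_i = \lambda_1 \beta_1 \mb{x}_1 + \sum_{i \geq 2} (\lambda_1 \beta_i + \lambda_i) \mb{x}_i = \sum_i \gamma_i \mb{x}_i,
\]
where $\gamma_1 = \lambda_1\beta_1$ and $\gamma_i = \lambda_1\beta_i + \lambda_i$ for $i \geq 2$. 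By independence of $X$ (Definition \ref{d_indep}), it then suffices to prove $\gamma \neq \mb{0}$.

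The key step is to show constructively that $\lambda \neq \mb{0}$ implies $\gamma \neq \mb{0}$. The map $\lambda \mapsto \gamma$ is linear, and since $\beta_1 \neq 0$ it has an explicit inverse: $\lambda_1 = \beta_1^{-1}\gamma_1$ and $\lambda_i = \gamma_i - \beta_1^{-1}\beta_i\gamma_1$ for $i \geq 2$. From $\lambda \neq \mb{0}$ we get an index $k$ with $\lambda_k \neq 0$, and we split into two cases.

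\begin{itemize}
\item If $k = 1$, then $\gamma_1 = \lambda_1\beta_1 \neq 0$, because a product of two reals apart from $0$ is apart from $0$.
\item If $k \geq 2$, then $\gamma_k - \beta_1^{-1}\beta_k\gamma_1 = \lambda_k \neq 0$. Since $x + y \neq 0$ implies $x \neq 0$ or $y \neq 0$ (as used in Lemma \ref{l_beta_entry_apart_zero}), either $\gamma_k \neq 0$ or $\beta_1^{-1}\beta_k\gamma_1 \neq 0$. In the second alternative, $xy \neq 0$ gives $\gamma_1 \neq 0$.
\end{itemize}

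In every case some $\gamma_i \neq 0$, so $\gamma \neq \mb{0}$. Independence of $X$ then yields $\sum_i \gamma_i \mb{x}_i \neq \mb{0}$, which is exactly the required apartness for $Z$.

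The only delicate point is this middle step, namely passing apartness through the linear change of coordinates. Classically one would argue by contraposition: $\gamma = \mb{0}$ forces $\lambda = \mb{0}$. That argument only gives the weak form and is not acceptable here, because apartness is not the negation of equality. The route above avoids it by using the explicit inverse together with the two elementary apartness facts on sums and products. No co-transitivity is needed, since we never compare against an arbitrary third vector.
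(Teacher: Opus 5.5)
Your proof is correct and is essentially the paper's argument. You substitute $\mb{y} = \sum_i \beta_i \mb{x}_i$ and split on whether $\lambda_1$ or some $\lambda_k$ with $k \geq 2$ is apart from $0$. In the second case, breaking $\lambda_k = \gamma_k - \beta_1^{-1}\beta_k\gamma_1$ into its two summands is exactly the paper's step of applying co-transitivity to $\lambda_k \neq 0$ with the point $-\lambda_1\beta_k$.

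Your closing remark that no co-transitivity is needed is slightly misleading. The rule that $x + y \neq 0$ implies $x \neq 0$ or $y \neq 0$ is itself co-transitivity: apply Lemma \ref{l_apartness_cotransitive} to $x \neq -y$ with third point $0$. So you use the same fact in a different guise.
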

\begin{proof}
Notation as in the proof of Lemma \ref{l_austausch}.
We need to show that $Z = \{ \mb{z}_1 = \mb{y}, \mb{z}_2 = \mb{x}_2,...,\mb{z}_n = \mb{x}_n \}$ consists of independent vectors.
Let $\gamma \in \reals^m$ be apart from $\mb{0}$.
Using $\mb{z}_1 = \mb{y} = \sum_i \beta_i \mb{x}_i$, we can write $\sum_i \gamma_i \mb{z}_i = \sum_i \alpha_i \mb{x}_i$, where $\alpha_i = \gamma_i + \gamma_1  \beta_i$ if $i \neq 1$ and $\alpha_1 = \gamma_1  \beta_1$.
From $\gamma \neq \mb{0}$ we know either $\gamma_1 \neq 0$ or there is some $j \neq 1$ such that $\gamma_j \neq 0$.
In the first case $\alpha_1 = \gamma_1  \beta_1 \neq 0$ and hence $\alpha \neq \mb{0}$.
By the independence of $\mb{x}_1,...,\mb{x}_n$ this establishes $\sum_i \alpha_i \mb{x}_i = \sum_i \gamma_i \mb{z}_i \neq \mb{0}$.
In the second case, we look at $\alpha_j = \gamma_j + \gamma_1  \beta_j$.
We know $\gamma_j \neq 0$, so either $\gamma_1  \beta_j \neq -\gamma_j$ or $\gamma_1  \beta_j \neq 0$ by Lemma \ref{l_apartness_cotransitive}.
The first of these two cases implies $\alpha_j \neq 0$, which again yields $\alpha \neq \mb{0}$.
Similarly $\gamma_1  \beta_j \neq 0$ implies $\gamma_1 \neq 0$, which brings us back to our first case.
\end{proof}

\begin{theorem}[Steinitz Austauschsatz] \label{t_steinitz}
Let $X = \{ \mb{x}_1,...,\mb{x}_n \}$ be a set of independent vectors.
Suppose $Y = \{ \mb{y}_1,...,\mb{y}_m \}$ is a set of vectors spanning $X$.
Then $n \leq m$ and there is a subset $Z = \{ \mb{z_1},...,\mb{z_{m - n}} \}$ of $Y$ such that $\Span ( X \cup Z ) = \Span Y$.
\end{theorem}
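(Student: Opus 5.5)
We argue by induction on $n$, replacing the vectors of $Y$ one at a time by vectors of $X$. The invariant after $k$ steps ($0 \le k \le n$) is a set
\[
Y_k = \{\mb{x}_1,\dots,\mb{x}_k\} \cup Z_k, \qquad Z_k \subseteq Y, \quad |Z_k| = m-k,
\]
with $\Span Y_k = \Span Y$. For $k=0$ take $Z_0 = Y$. At the end, $Z = Z_n$ is the required subset, and $n \le m$ holds because each step consumes one element of $Y$, so $m - k \ge 0$ at every stage.

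For the inductive step, assume $Y_k$ is constructed and $k < n$. Since $X \subseteq \Span Y = \Span Y_k$, we can write
\[
\mb{x}_{k+1} = \sum_{i \le k} \lambda_i \mb{x}_i + \sum_{\mb{z} \in Z_k} \mu_{\mb{z}} \mb{z}.
\]
Then $\sum_{\mb{z}} \mu_{\mb{z}} \mb{z} = \mb{x}_{k+1} - \sum_{i\le k}\lambda_i \mb{x}_i$. The right-hand side is a combination of $\mb{x}_1,\dots,\mb{x}_{k+1}$ whose coefficient vector has entry $1$ at position $k+1$, hence is apart from $\mb{0}$. Independence of $X$ (any subfamily of an independent family is independent, by padding $\beta$ with zeros) therefore gives $\sum_{\mb{z}} \mu_{\mb{z}} \mb{z} \neq \mb{0}$. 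Now Lemma \ref{l_beta_entry_apart_zero} yields some $\mb{z}^* \in Z_k$ with $\mu_{\mb{z}^*} \neq 0$. In particular $Z_k$ is inhabited, so $m-k \ge 1$, which is what makes the count $n \le m$ come out. This is the key constructive point: we cannot decide which $\mu$'s vanish, but the apartness $\neq \mb{0}$ produces an explicit index with invertible coefficient.

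We then swap $\mb{z}^*$ for $\mb{x}_{k+1}$, exactly as in the proof of the Austauschlemma \ref{l_austausch}. Because $\mu_{\mb{z}^*}$ is invertible, $\mb{z}^*$ lies in the span of $Y_{k+1} = (Y_k \setminus \{\mb{z}^*\}) \cup \{\mb{x}_{k+1}\}$, so $\Span Y_{k+1} = \Span Y_k = \Span Y$. Set $Z_{k+1} = Z_k \setminus \{\mb{z}^*\}$. We apply Lemma \ref{l_austausch} with the specific index $\mb{z}^*$ rather than quoting its statement directly: the statement only guarantees that \emph{some} index works, which might be one of the $\mb{x}_i$, and we need the index to lie in $Z_k$. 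This is also why we do not simply apply Lemma \ref{l_austausch} with $\mb{y} = \mb{x}_{k+1}$.

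The main obstacle is this step. We must ensure that the exchanged vector comes from $Z_k$ and not from the already inserted $\mb{x}_i$, and we must do so without deciding whether any coefficient is zero. Independence of $X$ handles both issues. Finally, "subsets" are treated as indexed families, so duplicates in $Y$ cause no trouble.
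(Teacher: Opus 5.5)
Your proof is correct and follows the same route as the paper: you successively exchange $\mb{x}_1,\dots,\mb{x}_n$ into $Y$ while preserving the span. It is more careful than the paper's version, which applies Lemma \ref{l_austausch} as stated and so never justifies that the discarded vector comes from the remaining part of $Y$ rather than from an already inserted $\mb{x}_i$. Your use of the independence of $X$, together with Lemma \ref{l_beta_entry_apart_zero} applied to the $Z_k$-part, closes exactly this gap and also yields $n \le m$ directly, whereas the paper obtains it by contradiction at the end.
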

\begin{proof}
Successively apply Lemma \ref{l_austausch}, exchanging $\mb{x}_1$ for some $\mb{y} \in Y$ yielding $Y_1$ with $\Span Y_1 = \Span Y$, exchanging  $\mb{x}_2$ for some  $\mb{y} \in Y_1$ yielding $Y_2$ with $\Span Y_2 = \Span Y$, etcetera.
Note that the role of $X$ in the notation of Lemma \ref{l_austausch} is played by $Y$.
We obtain a set of vectors $A$ consisting of $\min(n, m)$ vectors from $X$ and $m - \min(n, m)$ vectors from $Y$.
This set $A$ has the same span as $Y$.
Next we show $n \leq m$, we will argue by contradiction.
If we have $n > m$, then $A = \{ \mb{x}_1,...,\mb{x}_m \}$.
Since $\Span X \subseteq \Span Y = \Span A$, we see in particular that $\mb{x}_n \in \Span A$.
Clearly this contradicts the independence of $\{ \mb{x}_1,...,\mb{x}_n \}$.
Thus $n \leq m$ and we define $Z$ as the $m - n$ vectors from $Y$ in $A$.
\end{proof}
\begin{proposition} \label{p_basis_def}
Let $X = \{ \mb{x}_1,...,\mb{x}_n \}$ be a set of independent vectors.
Suppose $Y = \{ \mb{y}_1,...,\mb{y}_m \}$ is a set of vectors spanning $X$.
If $n = m$, then $Y$ is also independent and $\Span X = \Span Y$.
\end{proposition}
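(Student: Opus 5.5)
The plan is to run the exchange procedure from the proof of Theorem \ref{t_steinitz} with $n = m$, and to track independence along the way using Lemma \ref{l_independent_austausch}. Applying Lemma \ref{l_austausch} repeatedly, we swap $\mb{x}_1, \mb{x}_2, \dots, \mb{x}_n$ one at a time into the current spanning set, starting from $Y$. This produces sets $Y_0 = Y, Y_1, \dots, Y_n$ with $\Span Y_k = \Span Y$ for every $k$. Since $n = m$, the last set $Y_n$ consists exactly of $\mb{x}_1, \dots, \mb{x}_n$, so $\Span X = \Span Y_n = \Span Y$. This gives the second claim.

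For this to work, each step must satisfy the hypotheses of Lemma \ref{l_austausch}. At step $k$ we have $\mb{x}_k \in \Span X \subseteq \Span Y = \Span Y_{k-1}$. Moreover $\mb{x}_k \neq \mb{0}$, because $X$ is independent: take $\beta$ to be the $k$-th unit vector. We also have to check that the index $j$ found by the lemma belongs to one of the remaining original $\mb{y}$'s, and not to an already inserted $\mb{x}_i$. Write $\mb{x}_k$ as a combination of the current set. If every coefficient on the remaining $\mb{y}$'s were zero, then $\mb{x}_k$ would be a combination of $\mb{x}_1,\dots,\mb{x}_{k-1}$, which contradicts independence. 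But this only yields a negation, not a coefficient apart from zero. To get a positive witness, apply independence of $X$ to the coefficient vector $(\text{coefficients on } \mb{x}_i, -1)$, which is apart from $\mb{0}$ because of its $-1$ entry. This gives $\mb{x}_k - \sum_{i<k} c_i \mb{x}_i \neq \mb{0}$, and that difference equals the part of the combination coming from the $\mb{y}$'s. Lemma \ref{l_beta_entry_apart_zero} then produces a $\mb{y}$-coefficient apart from zero, as required.

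Independence of $Y$ is the part I expect to be the main obstacle, since Lemma \ref{l_independent_austausch} only transports independence forward from $X$-type sets. My approach is to run the exchange in reverse. $Y_n = X$ is independent, so I swap the vectors of $Y$ back in, one at a time, using Lemma \ref{l_independent_austausch}. At each stage the current set $W$ is independent and spans $\Span Y$, and the next $\mb{y}_l$ to insert lies in $\Span W$. One should check that $\mb{y}_l \neq \mb{0}$. Rather than verifying this directly, I would show that $\mb{y}_l$'s coefficient on some remaining $\mb{x}$ is apart from zero. The natural witness comes from the forward step: when $\mb{x}_k$ replaced $\mb{y}_l$, the coefficient of $\mb{y}_l$ in $\mb{x}_k$'s expression was apart from zero. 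Inverting that relation (as in the proof of Lemma \ref{l_austausch}) expresses $\mb{y}_l$ with coefficient $\beta^{-1}\neq 0$ on $\mb{x}_k$.

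So I would reverse the steps in order $k = n, n-1, \dots, 1$. At step $k$ the set $W = Y_k$ is independent, and $\mb{y}_l$ is a combination of $Y_k$ in which the coefficient on $\mb{x}_k$ is apart from zero. By independence of $Y_k$, this combination, namely $\mb{y}_l$ itself, is apart from $\mb{0}$. The swap of $\mb{x}_k$ for $\mb{y}_l$ is then exactly the one given by Lemma \ref{l_independent_austausch} with $j$ equal to the position of $\mb{x}_k$, and the resulting set $Y_{k-1}$ is independent. (The proof of Lemma \ref{l_independent_austausch} works for any $j$ with $\beta_j\neq 0$.) After $n$ reversals we reach $Y_0 = Y$, which is therefore independent.
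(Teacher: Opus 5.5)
Your proof is correct. For the independence of $Y$ it follows a different route from the paper.

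\textbf{The paper's route.} It takes $\Span X = \Span Y$ directly from Theorem \ref{t_steinitz}. It then certifies $\mb{y}_1 \neq \mb{0}$ by a separate coefficient comparison:
\begin{enumerate}
\item exchange $\mb{x}_1,\dots,\mb{x}_{n-1}$ into $Y$;
\item write $\mb{x}_n$ over $\{\mb{x}_1,\dots,\mb{x}_{n-1},\mb{y}_1\}$ and substitute $\mb{y}_1=\sum_i\beta_i\mb{x}_i$;
\item use Lemma \ref{l_weakly_indep_equi_unique_repr} to get $\alpha_n\beta_n=1$.
\end{enumerate}
It extends this to all of $Y$ ``by symmetry''. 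Finally it applies Lemma \ref{l_independent_austausch} forward from $X$, swapping the $\mb{y}$'s in.

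\textbf{Your route.} You record the forward exchange and undo it step by step. The inverted pivot coefficient then does two jobs at once: it shows $\mb{y}_l\neq\mb{0}$ (via independence of the current set), and it dictates which vector leaves.

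\textbf{Comparison.} The paper's version is shorter and reuses the Steinitz theorem as a black box. Yours is self-contained and controls every exchange index, which the paper leaves implicit in two places:
\begin{itemize}
\item Its symmetry step assumes one can choose which $\mb{y}$ survives the first $n-1$ exchanges.
\item Its claim that swapping the $\mb{y}$'s into $X$ returns $Y$ needs each new $\mb{y}$ to displace an $\mb{x}$ rather than an earlier $\mb{y}$. Knowing $\mb{y}\neq\mb{0}$ alone does not ensure this.
\end{itemize}
Your forward step fills the corresponding gap in the proof of Theorem \ref{t_steinitz}. There you use independence of $X$ and Lemma \ref{l_beta_entry_apart_zero} to find a remaining $\mb{y}$-coefficient apart from $0$.

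One point to make explicit: you are invoking the \emph{proofs} of Lemmas \ref{l_austausch} and \ref{l_independent_austausch} with a prescribed index $j$, not just their statements. Any $j$ with $\beta_j\neq 0$ works in those proofs, but the statements only assert that some $j$ exists.
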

\begin{proof}
We get $\Span X = \Span Y$ as an immediate consequence of Theorem \ref{t_steinitz} and $n = m$.
To prove the independence of $Y$, we will successively apply Lemma \ref{l_independent_austausch}.
We want to exchange the vectors in $X$ for those in $Y$.
That means we first have to establish $\mb{y} \neq \mb{0}$ for all $\mb{y} \in Y$.
We will just establish $\mb{y_1} \neq \mb{0}$, which suffices by symmetry.
From $\Span Y = \Span X$ we know that there exists a $\beta \in \reals^n$ such that $\mb{y}_1 = \sum_i \beta_i \mb{x}_i$.
Apply Lemma \ref{l_austausch} successively, replacing the vectors in $Y$ with those from $X$.
This is allowed because $X$ is independent and therefore $\mb{x} \neq \mb{0}$ for all $\mb{x} \in X$.
We obtain $\Span \{ \mb{x}_1,...,\mb{x}_{n-1}, \ \mb{y}_1 \} = \Span Y$.
Since we have already established $\Span X = \Span Y$, there is an $\alpha \in \reals^n$ such that $\mb{x}_n = \sum_{i < n} \alpha_i \mb{x}_i + \alpha_n \mb{y}_1$.
Substituting our expression for $\mb{y}_1$ in this last equation, we have written $\mb{x}_n$ as a linear combination of the vectors $\mb{x}_1,...,\mb{x}_n$.
By Lemma \ref{l_weakly_indep_equi_unique_repr}, all coefficients must be $0$, except that of $\mb{x}_n$.
This coefficient is obviously $1$, so $\alpha_n \beta_n = 1$ and in particular $\beta_n \neq 0$.
Therefore, we see $\beta \neq \mb{0}$ and again by the independence of $\mb{x}_1,...,\mb{x}_n$ we now know $\mb{y}_1 \neq \mb{0}$.
\newline
Thus we are allowed to successively apply Lemma \ref{l_independent_austausch}.
We exchange the vectors in $X$ for those in $Y$.
Because $n = m$ this will result in us getting $Y$ back again.
Since $X$ is independent and the exchange of vectors in Lemma \ref{l_independent_austausch} preserves independence, $Y$ is also independent. 
\end{proof}

We call a finite set of vectors $X = \{ \mb{x}_1,...,\mb{x}_n \}$ a \textit{basis} of a vector space $V$ if $V = \Span X$ and $X$ is independent.
By Proposition \ref{p_basis_def} any two bases have the same number of elements.
Thus, we can define the \textit{dimension} of a vector space as the cardinality of one of its bases.
Note that not every vector space has a dimension.
A trivial example is $\Span \{ \lvert \rho \rvert \}$, where $\rho = 0$ is undecidable.
Although independence and dimension are generally undecidable, there is something we can say about arbitrary collections of vectors.
This requires us to recall some results on angles between vectors and hence projections of vectors.
\clearpage
\begin{definition}
For vectors $\mathbf{x} \in \mathbb{R}^n$ the \textbf{Euclidean} or \textbf{$L^2$ norm} $\norm{\mathbf{x}}$ of $\mathbf{x}$ is defined as $\sqrt{\sum_{i = 1}^n \mathbf{x}_i^2}$, as usual.    
\end{definition}

\begin{definition}\label{d_proj}
Let $X$ be a linear subspace of $\mathbb{R}^m$ and $\mb{y}$ be a vector in $\mathbb{R}^m$.
A \textbf{projection} of $\mb{y}$ onto $X$ is a vector $proj_X \mb{y}$ satisfying:
\begin{enumerate}
    \item $proj_X \mb{y}$ lies in $\Span X$,
    \item $\mb{y} - proj_X \mb{y}$ is orthogonal to $X$.
\end{enumerate}
\end{definition}

\begin{lemma}\label{l_proj}
Let $X$ be a linear subspace of $\mathbb{R}^m$ and $\mb{y}$ be a vector in $\mathbb{R}^m$.
If $X$ has a basis $B$, then $proj_X \mb{y}$ exists and is independent of the chosen basis.
\end{lemma}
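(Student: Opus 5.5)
Let $B = \{\mb{b}_1,\dots,\mb{b}_k\}$ be a basis of $X$. We look for $proj_X \mb{y}$ of the form $\sum_j c_j \mb{b}_j$. Condition 2 of Definition \ref{d_proj} is equivalent to $\langle \mb{y} - \sum_j c_j \mb{b}_j, \mb{b}_i\rangle = 0$ for all $i$, since every element of $X$ is a linear combination of the $\mb{b}_i$. This gives the normal equations $G\mb{c} = \mb{r}$, where $G_{ij} = \langle \mb{b}_i, \mb{b}_j\rangle$ is the Gram matrix and $r_i = \langle \mb{y}, \mb{b}_i\rangle$. Existence therefore reduces to showing that $G$ is invertible constructively, and for this it suffices to prove $\det G \neq 0$ (apart from zero). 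Then the adjugate formula gives $\mb{c} = G^{-1}\mb{r}$ with computable entries, and no case distinction on equality of reals is needed.

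To show $\det G \neq 0$ we use independence in the strong sense of Definition \ref{d_indep}. I would not argue through a classical contradiction; instead I would obtain a quantitative bound. The map $\beta \mapsto \norm{\sum_i \beta_i \mb{b}_i}$ is uniformly continuous on the unit sphere $S^{k-1}$, which is compact (located and totally bounded). By Definition \ref{d_indep} it is positive at every point of the sphere. The standard constructive fact that a uniformly continuous function which is positive everywhere on a compact set has a positive infimum requires the fan theorem, so I would avoid it. Instead I would run Gram–Schmidt on $B$ directly. Inductively, $\mb{b}_{l}$ is independent from $\mb{b}_1,\dots,\mb{b}_{l-1}$: if $\mb{b}_l = \sum_{i<l}\beta_i \mb{b}_i$, the coefficient vector $(-\beta,1)\neq \mb{0}$ would give $\mb{0}\neq\mb{0}$, so the residual $\mb{b}_l - \sum_{i<l}\beta_i\mb{b}_i$ is apart from $\mb{0}$ for every $\beta$. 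In particular the Gram–Schmidt residual $\mb{u}_l$ (itself of this form, with coefficients computed from earlier, already normalised vectors) satisfies $\mb{u}_l \neq \mb{0}$. Hence $\norm{\mb{u}_l}>0$ and we can normalise it. The result is an orthonormal basis $\mb{e}_1,\dots,\mb{e}_k$ of $\Span B$, and we can then simply set $proj_X\mb{y} = \sum_i \langle \mb{y},\mb{e}_i\rangle \mb{e}_i$, which satisfies both conditions by direct computation. This route bypasses $\det G$ entirely, so I would adopt it as the main existence argument.

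For independence of the chosen basis, suppose $\mb{p}$ and $\mb{p}'$ both satisfy Definition \ref{d_proj}, possibly constructed from different bases. Then $\mb{p}-\mb{p}'\in X$, and $\mb{p}-\mb{p}' = (\mb{y}-\mb{p}')-(\mb{y}-\mb{p})$ is orthogonal to $X$, so $\langle \mb{p}-\mb{p}',\mb{p}-\mb{p}'\rangle = 0$. This yields $\norm{\mb{p}-\mb{p}'}=0$ and thus $\mb{p}=\mb{p}'$ componentwise, since a sum of squares equal to zero forces each square to be $<q$ for every $q>0$, and Lemma \ref{l_eq_zero} applies. This is purely equational and constructively unproblematic.

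The main obstacle is the inductive step of Gram–Schmidt. We must verify that $\mb{u}_l$ really is of the form $\mb{b}_l - \sum_{i<l}\beta_i \mb{b}_i$, which holds because each $\mb{e}_i$ for $i<l$ lies in $\Span\{\mb{b}_1,\dots,\mb{b}_i\}$. We must also verify that apartness from $\mb{0}$ gives a positive lower bound on the norm, so that division is legitimate. Both follow from the definitions: $\mb{u}_l\neq\mb{0}$ means some coordinate is apart from $0$, so $\norm{\mb{u}_l}^2$ exceeds a positive rational.
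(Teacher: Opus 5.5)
Your proof is correct and is essentially the paper's: you run Gram--Schmidt on $B$, where each residual $\mb{u}_l$ has coefficient $1$ on $\mb{b}_l$, so Definition~\ref{d_indep} applied directly to the coefficient vector $(-\beta,1,0,\dots,0)\neq\mb{0}$ gives $\mb{u}_l\neq\mb{0}$, and you then use the formula $\sum_i\langle\mb{y},\mb{e}_i\rangle\mb{e}_i$; note that the apartness comes from this direct application, not from the contradiction phrasing you wrap it in, which by itself would only yield $\neg(\mb{u}_l=\mb{0})$. The only variation is the uniqueness step, where you use the basis-free observation that $\mb{p}-\mb{p}'$ lies in $X$ and is orthogonal to $X$, while the paper compares coefficients in the orthonormal basis; both work, and your Gram-matrix/fan-theorem detour can simply be dropped.
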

\begin{proof}
Using the usual Gram-Schmidt method applied to the basis $B = \{ \mb{b}_1,...,\mb{b}_n \}$, we get an orthonormal basis $U= \{ \mb{u}_1,...,\mb{u}_n \}$ of $X$.
This method is constructive, as the orthogonal basis vectors defined by the iterative formulation $\mb{u}'_1 = \mb{b}_1$ and $\mb{u}'_{k + 1} = \mb{b}_{k + 1} - \sum_{i \leq k} \langle \mb{u}'_i, \mb{b}_{k + 1} \rangle \langle \mb{u}'_i, \mb{u}'_i \rangle^{-1} \mb{u}'_i$.
The only issue could be in dividing by $\langle \mb{u}'_i, \mb{u}'_i \rangle$, since we then need $\norm{\mb{u}'_i} > 0$.
However, the vectors $\mb{u}'_i$ can be rewritten as linear combinations of the original independent vectors $\mb{b}_i$. 
Moreover, the coefficient of $\mb{b}_i$ in $\mb{u}'_i$ is always apart from $0$.
Because $B$ is a basis, this implies $\norm{\mb{u}'_i} > 0$.
We can then create the orthonormal basis vectors $\mb{u}_i = \mb{u}'_i / \norm{\mb{u}'_i}$ and define $proj_X \mb{y}$ with the usual formula $\sum_{i \leq n} \langle\mb{y}, \mb{u}_i\rangle \mb{u}_i$.

Next, we prove that the projection is uniquely determined.
The proof is identical to the classical version.
Since any $proj_X \mb{y}$ is in the span of $U$, we may write it as $\sum_{i \leq n} \alpha_i \mb{u}_i$.
Taking the inner product of this expression and $\mb{u}_i$ gives us $\alpha_i = \langle proj_X \mb{y}, \mb{u}_i\rangle$ by the orthogonality of the vectors $\mb{u}_i$.
At the same time, we can show that the expression $\langle proj_X \mb{y}, \mb{u}_i\rangle$ equals $\langle \mb{y}, \mb{u}_i \rangle$.
Because $\mb{y} - proj_X \mb{y}$ is orthogonal to $X$, it is also orthogonal to the basis vectors in $U$.
Writing $\langle \mb{y} - proj_X \mb{y}, \mb{u}_i \rangle = 0$ then yields $\langle \mb{y}, \mb{u}_i \rangle = \langle proj_X \mb{y}, \mb{u}_i \rangle$.
Thus, the coefficient vector $\alpha$ is uniquely determined.
\end{proof}

For an arbitrary collection of vectors $X$, Theorem \ref{t_classification} seems the strongest possible positive result on independence.
We can approximate the dimension of $X$ from below and up to vectors of small length or vectors in or nearly in the span of currently identified independent columns. 
Theorem \ref{t_classification} distinguishes these two, not necessarily distinct, cases of failure to prove independence.
We first derive two lemmata to obtain this theorem.

\begin{lemma}\label{l_proj_indep}
Let $X = \Span \{\mb{x}_1,...,\mb{x}_n \} $ be an independent linear subspace of $\mathbb{R}^m$ and $\mb{y}$ a vector in $\mathbb{R}^m$.
Then $y$ is independent from $X$ if and only if $\norm{\mb{y} - proj_X \mb{y}} > 0$.
\end{lemma}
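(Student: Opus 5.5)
We argue via the orthonormal basis $U=\{\mb{u}_1,\dots,\mb{u}_n\}$ of $X$ obtained by Gram--Schmidt in the proof of Lemma \ref{l_proj}; there $proj_X \mb{y} = \sum_i \langle \mb{y},\mb{u}_i\rangle \mb{u}_i$. Write $\mb{r} = \mb{y} - proj_X \mb{y}$. By the second defining property of the projection, $\mb{r}$ is orthogonal to every vector of $X$. The main tool is therefore the Pythagorean identity: for every $\mb{v} \in X$,
\[
\norm{\mb{y} - \mb{v}}^2 = \norm{\mb{r}}^2 + \norm{proj_X \mb{y} - \mb{v}}^2 .
\]
This is a purely algebraic identity, so it holds constructively. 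It gives $\norm{\mb{y}-\mb{v}} \geq \norm{\mb{r}}$ for all $\mb{v}\in X$.

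For the direction ``$\Leftarrow$'', assume $\norm{\mb{r}} > 0$ and let $\beta \in \reals^n$ be arbitrary. Take $\mb{v} = \sum_i \beta_i \mb{x}_i \in X$. The identity yields $\norm{\mb{y} - \mb{v}} \geq \norm{\mb{r}} > 0$, so $\norm{\mb{y}-\mb{v}} > 0$. We then pass from a positive norm to an apart coordinate. Since $\norm{\mb{y}-\mb{v}}^2 = \sum_k (\mb{y}-\mb{v})_k^2 > 0$, the fact that a sum apart from $0$ has a summand apart from $0$ (as used in Lemma \ref{l_beta_entry_apart_zero}) gives some $k$ with $(\mb{y}-\mb{v})_k^2 \neq 0$. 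Hence $(\mb{y}-\mb{v})_k \neq 0$, that is $\mb{y} \neq \mb{v}$. Thus $\mb{y}$ is independent from $\mb{x}_1,\dots,\mb{x}_n$ in the sense of Definition \ref{d_indep}.

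For the direction ``$\Rightarrow$'', assume $\mb{y}$ is independent from $X$. The projection $proj_X \mb{y}$ lies in $\Span X$, so it equals $\sum_i \beta_i \mb{x}_i$ for some $\beta$. Here we use that each $\mb{u}_i$ is an explicit linear combination of the $\mb{x}_j$ by the Gram--Schmidt recursion, so the coefficients can be computed and no existence claim is needed. Independence then gives $\mb{y} \neq proj_X \mb{y}$, i.e.\ $\mb{r} \neq \mb{0}$. So some coordinate satisfies $|r_k| > 0$, and hence $\norm{\mb{r}} \geq |r_k| > 0$.

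The step needing the most care is the constructive bookkeeping between ``norm $>0$'' and ``apart from $\mb{0}$''. Neither direction may rely on deciding $\mb{r} = \mb{0}$. This is handled by the sum/product apartness facts already used in Lemma \ref{l_beta_entry_apart_zero}, together with monotonicity of the square root on nonnegative reals.
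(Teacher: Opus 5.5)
Your proof is correct and follows the paper's own argument. For the forward direction, you use that $proj_X \mb{y}$ is a linear combination of the $\mb{x}_i$, so independence gives $\mb{y} \neq proj_X \mb{y}$. For the converse, you use the same Pythagorean decomposition to get $\norm{\mb{y} - \sum_i \beta_i \mb{x}_i} \geq \norm{\mb{y} - proj_X \mb{y}} > 0$. Your explicit passage between positive norm and coordinatewise apartness only spells out a step the paper leaves implicit.
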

\begin{proof}
This proof is identical to the classical variant.
Suppose $\mb{y}$ is independent from $X$.
By Definition \ref{d_proj} the projection $proj_X \mb{y}$ is a linear combination of vectors in $X$.
Then Definition \ref{d_indep} precisely states $\mb{y} \neq proj_X \mb{y}$ and hence $\mb{y} - proj_X \mb{y} \neq 0$ and $\norm{\mb{y} - proj_X \mb{y}} > 0$.
Next, assume $\norm{\mb{y} - proj_X \mb{y}} > 0$ and let $\sum_i \beta \mb{x}_i$ be an arbitrary linear combination of vectors from $X$.
Because $\mb{y} - proj_X \mb{y}$ is orthogonal to $X$, we can write:

\begin{align*}
    \norm{\mb{y} - \sum_i \beta \mb{x}_i}^2 &= \norm{\mb{y} - proj_X \mb{y} + proj_X \mb{y} - \sum_i \beta \mb{x}_i}^2 \\
    &= \norm{\mb{y} - proj_X \mb{y}}^2 + \norm{proj_X \mb{y} - \sum_i \beta \mb{x}_i}^2 \\
    &\geq \norm{\mb{y} - proj_X \mb{y}}^2 \\
    &> 0.
\end{align*}
Thus, we have established $\norm{\mb{y} - \sum_i \beta \mb{x}_i} >0$ and therefore $\mb{y}$ is independent of $X$.
\end{proof}

\begin{lemma}\label{l_trich}
Let $X$ be some independent linear subspace of $\mathbb{R}^m$.
Then for every vector $\mb{y} \in \mathbb{R}^m$ and $\varepsilon > 0$, we can decide between the following alternatives:
\begin{enumerate}
    \item $\mb{y}$ is independent from $X$,
    \item $\norm{\mb{y}} < \varepsilon$,
    \item the angle between $\mb{y}$ and $X$ is less than $\varepsilon$.
\end{enumerate}
\end{lemma}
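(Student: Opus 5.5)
Since $X$ is an independent linear subspace, it has a basis, so by Lemma \ref{l_proj} the projection $\mb{p} := proj_X \mb{y}$ exists; write $\mb{r} := \mb{y} - \mb{p}$. By Lemma \ref{l_proj_indep}, alternative 1 is equivalent to $\norm{\mb{r}} > 0$. The plan is to reduce the trichotomy to two approximate decisions about the real numbers $\norm{\mb{y}}$ and $\norm{\mb{r}}$. These decisions are available constructively: for real $a$ and rationals $0 < s < t$ we can decide $a > s$ or $a < t$.

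First decide, for $\norm{\mb{y}}$, between $\norm{\mb{y}} < \varepsilon$ and $\norm{\mb{y}} > \varepsilon/2$. In the first case we are in alternative 2 and stop. In the second case $\norm{\mb{y}}$ is bounded away from zero, so we may divide by it. We interpret the angle between $\mb{y}$ and $X$ as the angle $\theta$ between $\mb{y}$ and $\mb{p}$, so that $\sin\theta = \norm{\mb{r}}/\norm{\mb{y}}$. This identity follows from $\norm{\mb{y}}^2 = \norm{\mb{p}}^2 + \norm{\mb{r}}^2$, which in turn follows from the orthogonality in Definition \ref{d_proj}.

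Next pick a rational $\delta > 0$ with $\delta < \sin(\varepsilon)$ (taking $\varepsilon < \pi/2$ without loss of generality), and decide for $\norm{\mb{r}}/\norm{\mb{y}}$ between $> \delta/2$ and $< \delta$. If $\norm{\mb{r}}/\norm{\mb{y}} > \delta/2$, then $\norm{\mb{r}} > \delta\varepsilon/4 > 0$, and Lemma \ref{l_proj_indep} gives alternative 1. If $\norm{\mb{r}}/\norm{\mb{y}} < \delta$, then $\sin\theta < \sin\varepsilon$. Since $\theta \in [0,\pi/2]$ (as $\langle \mb{y}, \mb{p}\rangle = \norm{\mb{p}}^2 \geq 0$) and $\sin$ is increasing there, we get $\theta < \varepsilon$, which is alternative 3.

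The main obstacle is making the angle well defined constructively. When $\mb{p}$ might not be apart from $\mb{0}$, $\cos\theta = \norm{\mb{p}}/\norm{\mb{y}}$ is still meaningful because only $\norm{\mb{y}}$ must be bounded away from zero. The cleanest fix is to \emph{define} the angle through $\sin\theta = \norm{\mb{r}}/\norm{\mb{y}}$, or through $\arccos(\norm{\mb{p}}/\norm{\mb{y}})$ with $\arccos$ continuous on $[0,1]$, and then to check monotonicity. The remaining work, the approximate comparisons, is routine.
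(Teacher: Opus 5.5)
Your proposal is correct and follows essentially the paper's route. You decide $\norm{\mb{y}} < \varepsilon$ versus $\norm{\mb{y}}$ bounded away from $0$, then compare $\norm{\mb{y} - proj_X \mb{y}}$ against a positive multiple of $\sin(\min(\varepsilon,\frac{1}{2}\pi))\norm{\mb{y}}$, getting independence from Lemma \ref{l_proj_indep} in one case and an angle $\sin^{-1}(\norm{\mb{y} - proj_X \mb{y}}/\norm{\mb{y}})$ below $\varepsilon$ in the other; your extra care over the definition of the angle and the monotonicity of $\sin$ on $[0,\frac{1}{2}\pi]$ fills in details the paper simply asserts.
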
   
\begin{proof}
First, decide $\norm{\mb{y}} < \varepsilon \lor \norm{\mb{y}} > 0$, in the first case we are done.
If $ \norm{\mb{y}} > 0$, calculate $proj_X(\mb{y})$ using Lemma \ref{l_proj} and consider $\mb{a} =\mb{y} - proj_X(\mb{y})$.
We again distinguish two cases by considering $\norm{\mb{a}}$.
If $\norm{\mb{a}} > 0$, then $\mb{y}$ is independent of $X$ by Lemma \ref{l_proj_indep}.
Finally, consider the case $\norm{\mb{a}} < \sin(\min(\varepsilon, \frac{1}{2}\pi))  \norm{\mb{y}}$.
This decision is allowed because $\sin(\min(\varepsilon, \frac{1}{2}\pi)))  \norm{\mb{y}} > 0$.
Now, the angle between $\mb{y}$ and $X$, which equals $\sin^{-1}(\norm{\mb{a}} / \norm{\mb{y}})$, is less than $\varepsilon$.
\end{proof}

\begin{theorem}\label{t_classification}
Let $X = \Span \{ \mb{x}_1,...,\mb{x}_n \}$, then for each $\varepsilon > 0$ the vectors $\mb{x}_1,...,\mb{x}_n$ can be divided into three groups:
\begin{enumerate}
    \item a set $B$ consisting of independent vectors,
    \item a set $\tilde{B}$ of vectors whose angle with $B$ is smaller than $\varepsilon$,
    \item a set $E$ of vectors with $\norm{\mb{e}} < \varepsilon$ for all $\mb{e} \in E$.

\end{enumerate}
\end{theorem}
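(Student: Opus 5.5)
We proceed by induction on the index $k$, processing $\mb{x}_1, \dots, \mb{x}_n$ one at a time. We maintain three disjoint sets $B_k, \tilde{B}_k, E_k$ whose union is $\{\mb{x}_1,\dots,\mb{x}_k\}$. Throughout, $B_k$ is independent, every vector of $\tilde{B}_k$ makes an angle smaller than $\varepsilon$ with $\Span B_k$, and every vector of $E_k$ has norm smaller than $\varepsilon$. We start with $B_0=\tilde{B}_0=E_0=\emptyset$, reading the empty set as trivially independent with span $\{\mb{0}\}$.

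At step $k+1$ we apply Lemma \ref{l_trich} to $\mb{y}=\mb{x}_{k+1}$, the independent subspace $\Span B_k$, and $\varepsilon$. Lemma \ref{l_proj} guarantees that the projection exists, since $B_k$ is a basis of its span. Lemma \ref{l_trich} decides between its three alternatives, and each alternative tells us where to place $\mb{x}_{k+1}$.
\begin{enumerate}
\item If $\norm{\mb{x}_{k+1}}<\varepsilon$, we put it in $E_{k+1}$.
\item If its angle with $\Span B_k$ is less than $\varepsilon$, we put it in $\tilde{B}_{k+1}$.
\item If it is independent from $\Span B_k$, we put it in $B_{k+1}$. Independence of $B_{k+1}=B_k\cup\{\mb{x}_{k+1}\}$ then follows from Lemma \ref{l_indep_add}.
\end{enumerate}
The case $B_k=\emptyset$ needs a separate reading: the projection is $\mb{0}$, and $\norm{\mb{y}}>0$ means $\mb{y}\neq\mb{0}$, which is exactly independence of the singleton $\{\mb{y}\}$. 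So the base case is covered by the same decision $\norm{\mb{y}}<\varepsilon \lor \norm{\mb{y}}>0$.

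The main obstacle is that $B$ grows after vectors have already been placed in $\tilde{B}$. We must check that the angle condition for earlier members of $\tilde{B}$ survives when more vectors are added to $B$. It does, because the angle condition is monotone under enlarging the subspace. If $\Span B_k\subseteq \Span B_{k'}$, then the distance from $\mb{y}$ to $\Span B_{k'}$ is at most its distance to $\Span B_k$. This is the minimizing property of the projection, shown in the computation in the proof of Lemma \ref{l_proj_indep}. Hence $\norm{\mb{y}-proj\,\mb{y}}/\norm{\mb{y}}$ can only decrease, and the angle stays below $\varepsilon$.

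Note that we only ever use the quantities produced by the decision procedure, and never need to compare two real numbers for equality. After $n$ steps, setting $B=B_n$, $\tilde{B}=\tilde{B}_n$ and $E=E_n$ gives the required partition. The groups may overlap in the classical sense, for instance a short vector could also be nearly in $\Span B$, but we assign each vector to a single group, and the statement allows this.
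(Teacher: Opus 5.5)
Your proposal is correct and is essentially the paper's argument: the paper's entire proof is ``iteratively apply Lemma \ref{l_trich}, obviously starting with $B=\emptyset$.'' You supply the details the paper leaves implicit, namely Lemma \ref{l_indep_add} to keep $B$ independent, the degenerate case $B=\emptyset$, and the fact that the angle condition only improves as $B$ grows (by the minimizing property of the projection), which is needed because membership in $\tilde{B}$ is judged against the final $B$.
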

\begin{proof}
Iteratively apply lemma \ref{l_trich}, obviously starting with $B = \emptyset$.
\end{proof}
\section{Gauss-Jordan elimination} \label{sec_gauss}
We denote matrices with capital letters such as $A$ and $B$.
With $[A]_i$ we denote the $i$'th column of the matrix $A$.
The vector space generated by the columns of $A$ is called the \textit{column space} of $A$.
The algebra of matrices obviously presents no constructive difficulties.
In general, not much of the usual results on matrices hold constructively, unless we restrict ourselves to \textit{ranked} matrices, which we define below.

\begin{definition} \label{d_rank}
Let $A$ be a matrix.
The span of the columns of $A$ is denoted by $\Col A$.
If $\Col A$ has dimension $n$, then we say that $A$ is a \textbf{ranked} matrix and its rank is $n$.
We will also write $\rank A = n$.
If we write an expression like $\rank A < m$ for some $m \in \mathbb{N}$, then this implicitly states that $\rank A$ exists.
\end{definition}

Since $\rank A = n$ means there exists \textit{a} basis of $\Col A$ of dimension $n$, it is not immediately obvious that we can choose $n$ columns of $A$ of $\Col A$ as a basis.
Fortunately this is precisely what the independent Austauschlemma \ref{l_independent_austausch} proves.

\begin{lemma} \label{l_matrix_basis_from_cols}
If $A$ is a matrix with $\rank A = n$, then there is a basis for $\Col A$ consisting of $n$ columns of $A$.
\end{lemma}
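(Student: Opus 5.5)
Let $B = \{\mb{b}_1,\dots,\mb{b}_n\}$ be a basis of $\Col A$ and let $\mb{a}_1,\dots,\mb{a}_k$ be the columns of $A$. The plan is to exchange the vectors of $B$ one at a time for columns of $A$, using the independent Austauschlemma \ref{l_independent_austausch} to keep the current set independent and Lemma \ref{l_austausch} to keep its span equal to $\Col A$. After $n$ steps the set consists only of columns of $A$, is independent, and spans $\Col A$, so it is the required basis.

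The exchange lemmas are stated in the form "given $\mb{y} \in \Span X$ with $\mb{y} \neq \mb{0}$, some $\mb{x}_j$ can be swapped out". We cannot choose which $\mb{x}_j$ leaves, since it is any index with $\beta_j \neq 0$. So the real task at each stage is to find a column $\mb{a}$ such that, writing $\mb{a} = \sum_i \beta_i \mb{z}_i$ in the current basis $Z$ (the representation is unique by Lemma \ref{l_weakly_indep_equi_unique_repr}), some coefficient $\beta_j$ belonging to a \emph{remaining original} vector $\mb{b}_j$ is apart from $0$. We then swap that $\mb{b}_j$ for $\mb{a}$. The new set is independent by Lemma \ref{l_independent_austausch} and spans $\Col A$ by Lemma \ref{l_austausch}.

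Finding such a column is the main obstacle, because "coefficient $\neq 0$" is not decidable. I would argue as follows. Suppose $Z$ currently contains columns $\mb{a}_{i_1},\dots,\mb{a}_{i_r}$ and original vectors $\mb{b}_{r+1},\dots,\mb{b}_n$ with $r<n$. Let $P$ be the map sending a vector of $\Col A$ to its coefficient vector $(\beta_{r+1},\dots,\beta_n)$ with respect to $Z$; it is linear. Each $\mb{b}_j$ with $j>r$ is a combination $\sum_l \lambda_l \mb{a}_l$ of columns, and $P(\mb{b}_{r+1}) = \mb{e}_1 \neq \mb{0}$. Hence $\sum_l \lambda_l P(\mb{a}_l) \neq \mb{0}$, and by Lemma \ref{l_beta_entry_apart_zero}, applied to the vectors $P(\mb{a}_l)$, some $\lambda_l P(\mb{a}_l) \neq \mb{0}$. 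Therefore $P(\mb{a}_l) \neq \mb{0}$, which gives an index $j>r$ with $\beta_j \neq 0$ for the column $\mb{a}_l$, constructively.

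Iterating this $n$ times completes the construction. Finally, the columns chosen are pairwise distinct: a column already in $Z$ has $P$-value $\mb{0}$, whereas the chosen column satisfies $P(\mb{a}_l) \neq \mb{0}$. So we really obtain $n$ distinct columns of $A$.
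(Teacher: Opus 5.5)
Your proof is correct and follows the paper's approach: you successively exchange the vectors of a basis of $\Col A$ for columns of $A$ via Lemma \ref{l_independent_austausch} and Lemma \ref{l_austausch}, and you also make explicit the step the paper's one-line proof leaves implicit, namely that (using your coefficient map $P$ and $\mathbf{b}_{r+1} \in \Span\{\mathbf{a}_1,\dots,\mathbf{a}_k\}$) one can constructively find a column whose coefficient on a not-yet-replaced original basis vector is apart from $0$, so no previously inserted column is ever swapped out. The only nitpicks are citational: Lemma \ref{l_beta_entry_apart_zero} as stated yields just $\lambda_l \neq 0$, so $P(\mathbf{a}_l) \neq \mathbf{0}$ really comes from its proof (a finite sum apart from $\mathbf{0}$ has a summand apart from $\mathbf{0}$), and you rely on the proofs of the exchange lemmas working for any prescribed index $j$ with $\beta_j \neq 0$, which they do.
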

\begin{proof}
Successively apply Lemma \ref{l_independent_austausch}, swapping vectors from a basis of the column span of $A$ for columns of $A$.
\end{proof}

The previous lemma is often omitted from classical treatments of linear algebra.
Most likely because its proof is trivial.
The statement mentions the rank of a matrix.
It must therefore be preceded by the statement that any two bases have equal cardinality, in both classical and constructive treatments.
The `implicit classical proof' of Lemma \ref{l_matrix_basis_from_cols} then likely goes as follows.
Go over each of the columns of $A$.
As long as the current column is not included in the span of previously collected columns, add it to this collection.
The final collection of columns is clearly linearly independent and spans $\Col A$.
Therefore, it must be a basis of $\Col A$.
Because bases have equal numbers of elements, apparently we have collected $\rank A$ columns with our algorithm.
Obviously, we cannot use this algorithm in constructive mathematics.
In general, we cannot decide $\mb{x} \in \Span(\{ \mb{x}_1, \mb{x}_2, ...,\mb{x}_n \})$.
Therefore, Lemma \ref{l_matrix_basis_from_cols} deserves a proof.
That it is still a trivial proof is due to Lemma \ref{l_independent_austausch}.
This strengthening of the Steinitz Austauschlemma with preservation of independence is often left out in classical treatments.
Although still true, it is simply not needed.
We do not need it to prove the Austauschsatz, which in turn suffices to prove the well-definedness of dimension.
This is the same constructively, but the regular Austauschlemma does not directly imply Lemma \ref{l_matrix_basis_from_cols}.

Matrices were invented to solve systems of linear equations.
This is still done through the process of \textit{Gauss-Jordan elimination}.
We successively apply simple operations (Definition \ref{d_elementary_operations}) to a matrix of coefficients and the vector of outcomes.
This results in a matrix and vector whose corresponding linear system has the same solutions as our original equations.
However, the operations were chosen in such a way that the resulting system is easily solved.
The matrix of coefficients has been reduced to a \textit{reduced row echelon form}.

\begin{definition}[Elementary operations] \label{d_elementary_operations}
The \textbf{elementary row operations} on the rows of a matrix are:
\begin{enumerate}
\item exchanging two rows,
\item multiplying a row with a number apart from $0$,
\item adding a multiple of a row to another row.
\end{enumerate}
The \textbf{elementary column operations} are defined analogously.
\end{definition}

We first show that these operations do not change the rank of a ranked matrix in Proposition \ref{p_elem_col_operations_preserve_col_span} and Proposition \ref{p_elem_row_operations_preserve_col_rank}.
To this end we first explain the notion of \textit{strong extensionality} and then prove two supporting lemmata in Lemma \ref{l_linear_map_basis_to_basis} and Lemma \ref{l_elem_row_are_extensional}.

\begin{definition} \label{d_strongly_extensional}
A linear map $T$ from $\mathbb{R}^n$ to $\mathbb{R}^m$ is called \textbf{strongly extensional} if $\mb{x} \neq  \mb{0}$ implies $T(\mb{x}) \neq  \mb{0}$.
\end{definition}

Obviously, this definition of strong extensionality holds more generally.
It applies to all mappings between spaces with so-called \textit{apartness} relations.
These are generalizations of the apartness of real numbers.
Since we are just looking at linear algebra, we do not need to consider this.

\begin{lemma} \label{l_linear_map_basis_to_basis}
Let $B = \{ \mb{b}_1,...,\mb{b}_n \}$ be a basis of the vector space $V$.
Suppose $T$ is a strongly extensional, linear transformation $V \rightarrow W$.
Then $T(B) = \{ T(\mb{b}_1),...,T(\mb{b}_n) \}$ is a basis of $T(V)$.
\end{lemma}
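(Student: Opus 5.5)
We need to show two things: $T(B)$ spans $T(V)$, and $T(B)$ is independent in the sense of Definition \ref{d_indep}. Spanning is routine and purely algebraic; independence is where strong extensionality enters.

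\emph{Spanning.} Let $\mb{w} \in T(V)$, so $\mb{w} = T(\mb{v})$ for some $\mb{v} \in V$. Since $B$ spans $V$, we can write $\mb{v} = \sum_i \beta_i \mb{b}_i$ for some $\beta \in \reals^n$. Linearity of $T$ then gives
\[
\mb{w} = \sum_i \beta_i T(\mb{b}_i) \in \Span T(B).
\]
Conversely, every linear combination $\sum_i \beta_i T(\mb{b}_i)$ equals $T(\sum_i \beta_i \mb{b}_i)$ and so lies in $T(V)$. Hence $\Span T(B) = T(V)$. No decisions about real numbers are needed here, so this step is constructive.

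\emph{Independence.} Let $\beta \in \reals^n$ with $\beta \neq \mb{0}$. We must show $\sum_i \beta_i T(\mb{b}_i) \neq \mb{0}$.
\begin{enumerate}
\item Since $B$ is independent, $\beta \neq \mb{0}$ gives $\mb{v} := \sum_i \beta_i \mb{b}_i \neq \mb{0}$.
\item Strong extensionality (Definition \ref{d_strongly_extensional}) then yields $T(\mb{v}) \neq \mb{0}$.
\item By linearity, $T(\mb{v}) = \sum_i \beta_i T(\mb{b}_i)$, which is the required apartness.
\end{enumerate}

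The only point that needs care, and the reason the lemma assumes strong extensionality rather than mere injectivity, is step 2. It turns a positive apartness $\mb{v} \neq \mb{0}$ into another positive apartness. With only the weak condition ($T\mb{x} = \mb{0} \Rightarrow \mb{x} = \mb{0}$), we would obtain at best $\neg(T\mb{v} = \mb{0})$, and Proposition \ref{p_inequality_not_cotransitive} shows this negative form cannot be upgraded to apartness constructively.

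A minor technicality is that Definition \ref{d_strongly_extensional} is phrased for maps $\reals^n \to \reals^m$, while here $T$ is defined on a subspace $V$. I will read the definition as applying verbatim to $V$: $\mb{x} \neq \mb{0}$ in $V$ implies $T(\mb{x}) \neq \mb{0}$. Apartness on $V$ is inherited from the ambient $\reals^k$, so nothing changes.
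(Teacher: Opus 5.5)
Your proof is correct and follows the paper's argument step for step. Spanning comes from linearity and $V = \Span B$; independence comes from passing $\beta \neq \mb{0}$ through the independence of $B$ to $\sum_i \beta_i \mb{b}_i \neq \mb{0}$, and then through strong extensionality to $T(\sum_i \beta_i \mb{b}_i) = \sum_i \beta_i T(\mb{b}_i) \neq \mb{0}$. You also check the reverse inclusion $\Span T(B) \subseteq T(V)$ and read Definition~\ref{d_strongly_extensional} for maps defined on a subspace $V$; the paper leaves both points implicit.
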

\begin{proof}
We need to prove $T(B)$ spans $T(V)$ and consists of independent vectors.
First we show that $T(B)$ spans $T(V)$, so let $\mb{v} \in V$.
Because $\Span B = V$ we can write $\mb{v}$ as $\sum_i \beta_i \mb{b}_i$ for some $\beta \in \mathbb{R}^n$.
From linearity of $T$ we see $T(\mb{v}) = \sum_i \beta_i T(\mb{b}_i) \in \Span T(B)$.
Next, we prove $\{ T(\mb{b}_1),...,T(\mb{b}_n) \}$ is independent.
Let $\beta \neq 0$ and consider $\sum_i \beta_i T(\mb{b}_i) = T( \sum_i \beta_i \mb{b}_i)$.
Since $B$ is a basis we see $\sum_i \beta_i \mb{b}_i \neq \mb{0}$. 
Because $T$ is strongly extensional this implies $T( \sum_i \beta_i \mb{b}_i) \neq  \mb{0}$.
Thus $T(B)$ consists of independent vectors.
\end{proof}

\begin{lemma} \label{l_elem_row_are_extensional}
Let $A$ be an arbitrary matrix.
Every elementary row operation is a strongly extensional, linear map $T$ from $\Col A$ to $T(\Col A)$.
\end{lemma}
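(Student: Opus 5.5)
Each elementary row operation acts on a column vector $\mb{x} \in \reals^m$ (the number of rows of $A$) as left multiplication by an elementary matrix $E$. Applying it to $A$ sends each column $[A]_i$ to $E[A]_i$, so it induces the linear map $T(\mb{x}) = E\mb{x}$ on $\Col A$. Since $T$ is given by matrix multiplication, linearity is immediate. The image $T(\Col A)$ is exactly the column space of $EA$: for $\mb{x} = \sum_i \beta_i [A]_i$ we have $T(\mb{x}) = \sum_i \beta_i E[A]_i$. The real content is strong extensionality, and I will prove this on all of $\reals^m$, which covers the restriction to $\Col A$.

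I would treat the three types of operations separately. For each, I show that $\mb{x} \neq \mb{0}$ implies $E\mb{x} \neq \mb{0}$. Recall that $\mb{x} \neq \mb{0}$ means there is an index $k$ with $x_k \neq 0$.

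\textbf{Swapping rows $i$ and $j$.} The vector $E\mb{x}$ has the same entries as $\mb{x}$, permuted. The entry $x_k$ appears at position $k$ if $k \notin \{i,j\}$, and at position $j$ or $i$ otherwise. In every case some entry of $E\mb{x}$ is apart from $0$.

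\textbf{Multiplying row $i$ by $c \neq 0$.} If $k \neq i$, the $k$-th entry is unchanged. If $k = i$, the entry becomes $c x_i$, and the product of two reals apart from $0$ is apart from $0$.

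\textbf{Replacing row $i$ by row $i$ plus $c$ times row $j$, with $j \neq i$.} If $k \neq i$, entry $k$ is unchanged, so we are done. If $k = i$, we know $x_i \neq 0$ and the new $i$-th entry is $x_i + c x_j$. Here I would invoke the co-transitivity of Lemma \ref{l_apartness_cotransitive}: from $x_i \neq 0$ we get, for the real $-c x_j$, either $x_i \neq -c x_j$ or $0 \neq -c x_j$. In the first case $x_i + c x_j \neq 0$, so the $i$-th entry of $E\mb{x}$ is apart from $0$. In the second case $c x_j \neq 0$, hence $x_j \neq 0$. Row $j$ is untouched, so the $j$-th entry of $E\mb{x}$ equals $x_j \neq 0$.

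An alternative route for all three cases is to observe that each $E$ is invertible, with an inverse $E^{-1}$ that is again an elementary matrix. Any matrix map $S$ satisfies $S\mb{y} \neq S\mb{z} \Rightarrow \mb{y} \neq \mb{z}$, because a nonzero entry of $S(\mb{y} - \mb{z})$ forces some entry of $\mb{y} - \mb{z}$ to be apart from $0$, by the argument of Lemma \ref{l_beta_entry_apart_zero}. Applying this to $S = E^{-1}$ with $\mb{y} = E\mb{x}$ and $\mb{z} = \mb{0}$ gives $E\mb{x} \neq \mb{0}$ directly. I would probably present this uniform argument and mention the case analysis only as illustration.

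The main obstacle is the third operation, where we cannot decide whether $x_i + c x_j$ vanishes. This is handled by the co-transitivity step, or it disappears entirely in the inverse-matrix approach.
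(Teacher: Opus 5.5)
Your case analysis is correct and is essentially the paper's proof. Swapping and scaling are immediate. For row addition, the paper likewise uses co-transitivity to split $x_1 \neq 0$ into $\lambda x_2 \neq -x_1$ or $\lambda x_2 \neq 0$, the latter yielding $x_2 \neq 0$ in an untouched row.

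The uniform argument you say you would actually present is also correct, and it is a genuinely different route. Instead of treating each operation separately, it rests on two facts:
\begin{enumerate}
\item $E$ has an explicit inverse. For scaling this is $c^{-1}$, which exists because $c \neq 0$ is apartness. You do not need the inverse to be elementary, only to be a matrix.
\item Every matrix $S$ reflects apartness: $S\mathbf{w} \neq \mathbf{0}$ implies $\mathbf{w} \neq \mathbf{0}$. This is exactly Lemma \ref{l_beta_entry_apart_zero} applied to $S\mathbf{w} = \sum_j w_j [S]_j$.
\end{enumerate}
Then $E^{-1}(E\mathbf{x}) = \mathbf{x} \neq \mathbf{0}$ forces $E\mathbf{x} \neq \mathbf{0}$.

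What this buys is generality. It shows that any linear map with a left inverse is strongly extensional. That handles an arbitrary product of row operations in one step, whereas Proposition \ref{p_elem_row_operations_preserve_col_rank} has to iterate the lemma operation by operation. The paper's version is more hands-on and uses nothing beyond co-transitivity on a single pair of entries.

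One correction to your closing remark: co-transitivity does not disappear in the inverse-matrix approach; it is only relocated. Lemma \ref{l_beta_entry_apart_zero} uses the step ``$x + y \neq 0$ implies $x \neq 0$ or $y \neq 0$'', which is itself an instance of Lemma \ref{l_apartness_cotransitive}. The constructive content of the two proofs is therefore the same, packaged differently.
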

\begin{proof}
Linearity is obvious for each of the row operations.
Consider swapping two rows.
If $\mb{x} \neq \mb{0}$, then there is an index $i$ such that $x_i \neq 0$.
The vector $T(\mb{x})$ is then clearly also apart from $\mb{0}$, there is an index $j$ such that $T(\mb{x})_j \neq 0$.
Next, we look at multiplying a row with a scalar $\lambda$ apart from $0$.
By symmetry assume we multiply the first row with $\lambda$.
If $\mb{x} \neq \mb{0}$ then there is an index $i$ such that $x_i \neq 0$.
Since $x_i = T(\mb{x})_i$ if $i > 1$ in this case, we only have to consider what happens when $i = 1$.
We get $T(\mb{x})_1 = \lambda x_1 \neq 0$.
Finally, consider adding a multiple of a row to another row.
By symmetry suppose that we add $\lambda$ times the second row to the first row.
If $\mb{x} \neq \mb{0}$ then there is an index $i$ such that $x_i \neq 0$.
Since $x_i = T(\mb{x})_i$ if $i > 1$ in this case, we only have to consider what happens when $i = 1$.
From $x_1 \neq 0$ we can decide $\lambda x_2 \neq - x_1$ or $\lambda x_2 \neq 0$.
In the first case we see $x_1 + \lambda x_2 = T(\mb{x})_1 \neq 0$.
In the second case we have $x_2 \neq 0$, which again shows $T(\mb{x})_2 \neq 0$.
\end{proof}

\begin{proposition}
\label{p_elem_row_operations_preserve_col_rank}
Let $A$ be a ranked matrix.
If $A'$ is obtained from $A$ by applying elementary row operations, then $\rank A' = \rank A$.
\begin{proof}
By the previous proposition, every elementary row operation is a strongly extensional, linear map. 
Choose a basis $\{ \mb{x}_1,...,\mb{x}_r \}$  of $\Col A$ consisting of $r = \rank A$ columns from $A$.
Applying a single elementary row operation $T$ to the matrix $A$ we get the matrix $B$.
By Lemma \ref{l_linear_map_basis_to_basis} $\Col B$ has a basis consisting of the $r$ vectors $\{ T(\mb{x}_1),...,T(\mb{x}_r) \}$.
This shows $\rank B = \rank A$. 
Repeating this argument for every elementary row operation that transforms $A$ into $A'$, we prove the desired result.
\end{proof}
\end{proposition}

\begin{proposition}
\label{p_elem_col_operations_preserve_col_span}
Let $A$ be a matrix.
If $B$ is obtained from $A$ by applying elementary column operations, then $\Col B = \Col A$.
\end{proposition}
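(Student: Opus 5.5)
The plan is to argue one elementary column operation at a time and then induct on the number of operations applied. Equality of column spaces is transitive, so it suffices to show that if $B$ arises from $A$ by a single elementary column operation, then $\Col B = \Col A$. For each of the three kinds of operation I will show both $\Col B \subseteq \Col A$ and $\Col A \subseteq \Col B$. Each column of $B$ is an explicit linear combination of columns of $A$, and conversely, so both inclusions follow by substituting into linear combinations.

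The three cases go as follows.

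For a swap of columns $[A]_i$ and $[A]_j$, the set of columns of $B$ is exactly the set of columns of $A$. Hence the spans coincide trivially.

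For multiplication of column $[A]_i$ by a number $\lambda \neq 0$, we have $[B]_i = \lambda [A]_i$, and all other columns are unchanged, so $\Col B \subseteq \Col A$. For the reverse inclusion we need $[A]_i = \lambda^{-1}[B]_i$. This is the only point where a constructive subtlety arises: $\lambda^{-1}$ exists because $\lambda$ is \emph{apart} from $0$, which is exactly the hypothesis built into Definition \ref{d_elementary_operations}. A merely non-zero $\lambda$ in the sense $\neg(\lambda = 0)$ would not suffice.

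For adding $\lambda [A]_j$ to $[A]_i$ with $i \neq j$, we have $[B]_i = [A]_i + \lambda [A]_j$ and $[B]_j = [A]_j$. So $\Col B \subseteq \Col A$, and conversely $[A]_i = [B]_i - \lambda [B]_j$ gives $\Col A \subseteq \Col B$.

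In every case an arbitrary vector $\sum_k \beta_k [B]_k$ is rewritten as a combination of the $[A]_k$ by substituting these expressions, and symmetrically in the other direction. No decision about real numbers is ever needed. I expect no real obstacle; the only step needing care is the inverse of $\lambda$ in the scaling case. Note that, unlike Proposition \ref{p_elem_row_operations_preserve_col_rank}, no rank hypothesis is needed, since we prove equality of spans rather than preservation of a basis.
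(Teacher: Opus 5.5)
Your proposal is correct and takes essentially the same approach as the paper. Both reduce to a single elementary column operation and check both inclusions for a swap, for scaling by $\lambda \neq 0$ (inverted because $\lambda$ is apart from $0$), and for adding a multiple of one column to another (undone via $[A]_i = [B]_i - \lambda [B]_j$).
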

\begin{proof}
We reason inductively, showing that each of the three elementary operations does not affect the column space.
Clearly swapping columns does not change the column space of a matrix.
Write $\mb{a}_1, \mb{a}_2,...,\mb{a}_n$ for the $n$ columns of $A$ and define $\mb{b}_1, \mb{b}_2,...,\mb{b}_n$ similarly.
We secondly show that multiplying a column with a scalar apart from $0$ does not affect the column space.
Suppose $\mb{x} = \sum_i \beta_i \mb{a}_i \in \Col A$ and $B$ is obtained by multiplying column 1 with $\lambda \neq 0$, so $\mb{b}_1 = \lambda \mb{a}_1$.
Then we can write $\mb{x} = \sum_{i \neq 1} \beta_i \mb{a}_i + \lambda^{-1}  \beta_1 \mb{b}_1$, showing $\mb{x} \in \Col B$.
Likewise $\sum_i \beta_i \mb{b}_i = \sum_{i \neq 1} \beta_i \mb{a}_i + \lambda  \beta_1 \mb{a_1}$, which proves $\Col B \subseteq \Col A$.
Finally, we prove that adding a multiple of a column to another column does not alter the column space.
Suppose $\mb{x} = \sum_i \beta_i \mb{a}_i \in \Col A$ and $B$ is obtained by adding $\kappa$ times column 1 to column 2.
This means $\mb{b}_2 = \mb{a}_2 + \kappa \mb{a}_1$.
Suppose $\mb{x} = \sum_i \beta_i \mb{a}_i \in \Col A$, then we can write $\mb{x} = \sum_{i \neq 2} \beta_i \mb{b}_i + \beta_2 \mb{b}_2 - \kappa  \beta_2 \mb{b}_1$.
This shows $\Col A \subseteq \Col B$.
We see $\Col B \subseteq \Col A$ from $\sum_i \beta_i \mb{b}_i = \sum_i \beta_i \mb{a}_i + \kappa  \beta_2 \mb{a}_1$.
\end{proof}

Next we define the \textit{row echelon form} and \textit{reduced row echelon form} of a matrix exactly as in classical mathematics in Definition \ref{d_row_echelon_form} and \ref{d_reduced_row_echelon_form}.
In general, we see that not every matrix can be reduced to this latter form.
This statement is equivalent to an omniscience principle (Proposition \ref{p_partial_pivoting_lpo}).
For ranked matrices, we obtain a constructive version of Gauss-Jordan elimination in Theorem \ref{t_gauss}.

\begin{definition} \label{d_row_echelon_form}
A matrix $A$ is in \textbf{row echelon form} if:
\begin{enumerate}
\item all rows either start with zero or more $0$'s, followed by an element apart from $0$, or is the zero row,
\item if a row has more initial zeroes than another row, it is below the row with less zeroes, 
\item two rows can only have the same number of initial zeroes if they are identically $\mathbf{0}$.
\end{enumerate}

Such a matrix for instance looks like this:
\begin{equation*}
\begin{vmatrix}
0 & + & . & . & . & .\\
0 & 0 & 0 & + & . & .\\
0 & 0 & 0 & 0 & + & .\\
0 & 0 & 0 & 0 & 0 & 0\\
0 & 0 & 0 & 0 & 0 & 0\\
\end{vmatrix}.
\end{equation*}
Here, $+$ stands for an entry apart from $0$ and $.$ stands for arbitrary entry.
\end{definition}

\begin{definition} \label{d_reduced_row_echelon_form}
A matrix $A$ is in \textbf{reduced row echelon form} if:
\begin{enumerate}
\item it is in row echelon form,
\item the first entry in a row apart from $0$ is always a $1$, 
\item all elements above such initial ones are $0$.
\end{enumerate}

Such a matrix for instance looks like this:
\begin{equation*}
\begin{vmatrix}
0 & 1 & . & 0 & 0 & .\\
0 & 0 & 0 & 1 & 0 & .\\
0 & 0 & 0 & 0 & 1 & .\\
0 & 0 & 0 & 0 & 0 & 0\\
0 & 0 & 0 & 0 & 0 & 0\\
\end{vmatrix}.
\end{equation*}
Here, $.$ stands for an arbitrary entry.
\end{definition}

\begin{proposition} \label{p_partial_pivoting_lpo}
Let $T$ be the statement
`Let $A$ be a ranked matrix with at least two columns.
By applying elementary row operations,
$A$ can be transformed into reduced row echelon form.'
Then $T$ is equivalent to LPO.
\end{proposition}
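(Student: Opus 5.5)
The plan is to prove the two implications separately. The direction from $T$ to LPO encodes a binary sequence into a single real number placed in a tiny ranked matrix. The direction from LPO to $T$ uses LPO to make equality with $0$ decidable, after which the classical algorithm goes through.

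\emph{$T$ implies LPO.} Let $\alpha \in \mathcal{C}$. Define a real number $a$ as in the proof of Proposition \ref{p_inequality_not_cotransitive}:
\begin{itemize}
\item $a_n = [-2^{-n}, 2^{-n}]$ as long as $\alpha_k = 0$ for all $k < n$;
\item $a_n = [2^{-m}, 2^{-m}]$ once a least $m$ with $\alpha_m = 1$ has appeared before $n$.
\end{itemize}
This sequence is nested and dwindling. Moreover, $a = 0$ if and only if $\forall n\, \alpha_n = 0$, and $a \neq 0$ if and only if $\exists n\, \alpha_n = 1$. For the latter, given $a \neq 0$ we find an $n$ with $0 \notin a_n$, and then the first $1$ occurs before index $n$, so we can search for it finitely.

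Now take the $1 \times 2$ matrix $A = \begin{vmatrix} a & 1 \end{vmatrix}$. It is ranked with $\rank A = 1$: $\Col A = \reals = \Span\{1\}$, and $\{1\}$ is independent. This is why the statement needs two columns; the second column supplies a basis regardless of $a$.

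With a single row, the only elementary row operation is multiplying the row by some $\lambda \neq 0$. So by $T$ some row $\begin{vmatrix} \lambda a & \lambda \end{vmatrix}$ is in reduced row echelon form. The row is not the zero row, since $\lambda \neq 0$. Clause 1 of Definition \ref{d_row_echelon_form} therefore tells us explicitly how many initial zeros the row has, either zero or one, and this is the key point. If there are none, then $\lambda a \neq 0$, hence $a \neq 0$, and some $\alpha_n = 1$. If there is one, then $\lambda a = 0$, hence $a = 0$, and all $\alpha_n = 0$. This gives LPO.

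\emph{LPO implies $T$.} First show that LPO gives $x = 0 \lor x \neq 0$ for every real $x$. Put $\alpha_n = 1$ iff $0 \notin x_n$, which is decidable because the endpoints are rational. LPO then yields one of two cases:
\begin{itemize}
\item every $x_n$ contains $0$, so $x = 0$ by Definition \ref{d_real};
\item some $x_n$ misses $0$, so $x \neq 0$.
\end{itemize}

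With this decidability, run the classical Gauss--Jordan procedure with row operations only. Scan the columns left to right. In the current column, decide for each entry below the already fixed pivot rows whether it is $0$ or apart from $0$.
\begin{itemize}
\item If some entry is apart from $0$, swap it into the pivot position. Scale the row by the inverse of the pivot, which is allowed since the pivot is apart from $0$. Then clear all other entries in that column by adding multiples of the pivot row.
\item If all the examined entries equal $0$, move to the next column.
\end{itemize}
After finitely many steps the matrix satisfies the three clauses of Definitions \ref{d_row_echelon_form} and \ref{d_reduced_row_echelon_form}. Each remaining row is identically zero, and this can be certified because every entry is now known to equal $0$. The ranked hypothesis is not even needed in this direction.

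The main obstacle is the bookkeeping in the first direction. One has to check that $A$ really is ranked, and that ``being in reduced row echelon form'' constructively hands us the number of initial zeros rather than merely a negated statement. The latter is exactly what clause 1 of Definition \ref{d_row_echelon_form} provides.
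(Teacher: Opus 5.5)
Your proof is correct and follows essentially the same route as the paper. Both directions match: the ranked $1 \times 2$ matrix $\lvert \rho \ 1 \rvert$ admits only row scaling, so a reduced row echelon form decides $\rho = 0 \lor \rho \neq 0$; conversely, LPO makes equality with $0$ decidable, so the usual Gauss--Jordan algorithm runs. You additionally spell out the encoding of a binary sequence into $\rho$ and the derivation of $x = 0 \lor x \neq 0$ from LPO, which the paper takes as standard, and you read off the case split from clause 1 (the number of initial zeros) rather than from the leading-one condition.
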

\begin{proof}
Let $\rho \in \reals$ and consider the $1 \times 2$ matrix $\lvert \rho \ 1 \rvert$.
Obviously, this matrix has rank $1$.
Suppose that we can apply elementary row operations to reduce this matrix to reduced row echelon form.
As it is a $1 \times 2$ matrix, we can only apply the multiplication with a number apart from $0$ row operation.
We then get a matrix $\lvert \beta \rho \  \beta \rvert$, where $\beta \neq 0$.
This can only be in reduced row echelon form if $\beta \rho = 0 \lor \beta \rho = 1$.
The first case proves $\rho = 0$ and the second case proves $\rho \neq 0$.
Thus, we have derived LPO.

Next, assume LPO, we shall derive $T$.
Let $A$ be a ranked matrix, then for each entry we can decide if it equals $0$ or is apart from $0$.
Clearly, this is sufficient to make the usual algorithm of Gauss-Jordan elimination work.
\end{proof}

One can wonder if the previous proposition can be simplified.
Can the ranked $1 \times 1$ matrices be reduced to row echelon form?
This turns out to be true and is not equivalent to some omniscience principle.
Clearly, the $1 \times 1$ matrix $\lvert \rho \rvert$ is ranked if and only if $\rho = 0  \lor \rho \neq 0$.
Thus, the undecidability of row reducing ranked matrices is a `linear algebraic' feature of constructive linear algebra.
The problem does not arise solely from the undecidability of real number ordering.
On the other hand, the proof of Proposition \ref{p_partial_pivoting_lpo} uses the fact that $1 \times n$ matrices allow only for a single kind of elementary row operation.
This is not an essential part of the statement, as the following proposition shows.
For larger matrices where more row operations are available, Gauss-Jordan elimination is still undecidable.

\begin{proposition}
Let $T$ be the statement 
`Let $A$ be a ranked $2 \times 2$ matrix.
By applying elementary row operations,
$A$ can be transformed into reduced row echelon form.'
Then $T$ is equivalent to LPO.
\end{proposition}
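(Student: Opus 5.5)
The plan mirrors the proof of Proposition \ref{p_partial_pivoting_lpo}: embed the $1 \times 2$ counterexample into a $2 \times 2$ matrix by appending a zero row. The point to check is that the extra row operations now available do not help decide anything.

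For the direction $T \Rightarrow$ LPO, let $\rho \in \reals$ and take
\[
A = \begin{vmatrix} \rho & 1 \\ 0 & 0 \end{vmatrix}.
\]

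\textbf{Step 1: $A$ is ranked with $\rank A = 1$.} The vector $(1,0)^T$ is apart from $\mb{0}$, so $\{(1,0)^T\}$ is independent. Both columns lie in its span, the first being $\rho\,(1,0)^T$. Hence $\{(1,0)^T\}$ is a basis of $\Col A$.

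\textbf{Step 2: the shape of any row-reduced matrix.} Any finite sequence of elementary row operations amounts to left multiplication by some matrix $E$, and by Lemma \ref{l_elem_row_are_extensional} each step is strongly extensional. Applying this to the second column $(1,0)^T \neq \mb{0}$, the vector $\mb{e} = E(1,0)^T$ satisfies $\mb{e} \neq \mb{0}$. By linearity the first column becomes $E(\rho,0)^T = \rho\,\mb{e}$, so the resulting matrix is
\[
EA = \begin{vmatrix} \rho e_1 & e_1 \\ \rho e_2 & e_2 \end{vmatrix}.
\]
Since $\mb{e} \neq \mb{0}$, we may pick an index $i$ with $e_i \neq 0$. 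Row $i$ of $EA$ is then apart from the zero row, because its second entry is apart from $0$.

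\textbf{Step 3: extracting the decision.} Suppose $T$ gives that $EA$ is in reduced row echelon form. Clause 1 of Definition \ref{d_row_echelon_form} is a disjunction that the form must actually supply for row $i$. Row $i$ is not the zero row, so it either starts with an entry apart from $0$ or starts with exactly one $0$ followed by $e_i \neq 0$.
\begin{itemize}
\item In the first case $\rho e_i \neq 0$, hence $\rho \neq 0$.
\item In the second case $\rho e_i = 0$ with $e_i \neq 0$, hence $\rho = 0$.
\end{itemize}
So $\rho = 0 \lor \rho \neq 0$ for every real $\rho$. This is equivalent to LPO: given $\alpha \in \mathcal{C}$, apply it to $\rho = \sum_n \alpha_n 2^{-n}$, exactly as was implicitly done in Proposition \ref{p_partial_pivoting_lpo}.

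\textbf{Main obstacle.} The delicate step is Step 2, namely ruling out that clever use of row swaps and row additions could sidestep the decision. Passing to the invariant description $EA = [\rho\,\mb{e} \mid \mb{e}]$ with $\mb{e} \neq \mb{0}$, guaranteed by strong extensionality, handles all row operations uniformly. I would also be careful that $\mb{e} \neq \mb{0}$ yields an explicit index $i$ with $e_i \neq 0$, which is what apartness of vectors means here.

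For the converse, assume LPO. Then every entry of a ranked $2 \times 2$ matrix can be decided to be equal to $0$ or apart from $0$, and the classical Gauss-Jordan algorithm runs verbatim, as in Proposition \ref{p_partial_pivoting_lpo}.
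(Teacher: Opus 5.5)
Your proof is correct and is essentially the paper's. You use the same matrix with rows $(\rho,1)$ and $(0,0)$, and the same invariant: every row-reduced version has the form $[\rho\,\mb{e} \mid \mb{e}]$ with $\mb{e} \neq \mb{0}$, which is the paper's $\lambda \neq 0 \lor \kappa \neq 0$. You get this invariant directly from Lemma \ref{l_elem_row_are_extensional}, whereas the paper re-runs the induction with co-transitivity.

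Your extraction step uses only clause 1 of Definition \ref{d_row_echelon_form} on a row with $e_i \neq 0$, not the leading-one conditions the paper invokes. So it in fact shows the slightly stronger statement that reaching plain row echelon form already yields LPO.
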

\begin{proof}
We first show that $T$ proves LPO.
Consider the following matrix $A$ with rank $1$:
\begin{equation*}
\begin{vmatrix}
\rho & 1\\
0 & 0\\
\end{vmatrix}.
\end{equation*}
After applying elementary row operations, we always end up with a matrix of the form:
\begin{equation*}
\begin{vmatrix}
\lambda \rho & \lambda \\
\kappa \rho & \kappa \\
\end{vmatrix}.
\end{equation*}
Here we have $\lambda \neq 0 \lor \kappa \neq 0$.
This can easily be shown inductively.
Initially, the matrix $A$ has this form and all three elementary row operations preserve this shape.
This is trivial for swapping rows and multiplying a row with a number apart from $0$.
We only prove the assertion for addition of rows.
By symmetry, it suffices to consider adding the first row to the second row.
This still leaves two cases, $\lambda \neq 0$ or $\kappa \neq 0$.
If $\lambda \neq 0$, then the preservation of the form is again trivial.
So suppose $\kappa \neq 0$ and we add $\beta$ times the first row to the second row.
We then get the following matrix:
\begin{equation*}
\begin{vmatrix}
\lambda \rho & \lambda \\
(\kappa + \beta \lambda) \rho & \kappa + \beta \lambda \\
\end{vmatrix}.
\end{equation*}
Since we know $\kappa \neq 0$, we can decide $\beta \lambda \neq -\kappa \lor \beta \lambda \neq 0$.
The first case proves that $\kappa + \beta \lambda$ is apart from $0$.
In the second case, we see $\lambda \neq 0$.
Either way, the resulting matrix has the postulated form.

Thus, after an arbitrary sequence of elementary row operations, the first column of our matrix is $(\lambda \rho \ \kappa \rho)^T$, where $\lambda \neq 0 \lor \kappa \neq 0$.
If the matrix is in reduced row echelon form, this implies $\lambda \rho = 0 \lor \lambda \rho = 1$ as well as $\kappa \rho = 0$.
The case $ \lambda \rho = 1$ proves $\rho \neq 0$.
If $\lambda \rho = 0$, then combining this with $\kappa \rho = 0$ and $\lambda \neq 0 \lor \kappa \neq 0$ we get $\rho = 0$.
Thus, we can decide $\rho = 0 \lor \rho \neq 0$ for an arbitrary real number $\rho$.
This is equivalent to LPO.

Proving LPO implies $T$ is again trivial.
Ordering of real numbers is decidable if LPO holds.
The usual process of Gauss-Jordan elimination can then be applied.
\end{proof}

Before we get to the positive, constructive version of the classical notion of Gauss-Jordan elimination, we need to derive a small lemma.

\begin{lemma} \label{l_apart_0_mat}
Let $A$ be a ranked matrix, then $\rank A > 0$ if and only if $A$ has an entry apart from $0$.
\end{lemma}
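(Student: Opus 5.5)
We prove the two implications separately. Both directions unfold the definition of rank as the cardinality of a basis of $\Col A$ (Definition \ref{d_rank}) and use Lemma \ref{l_matrix_basis_from_cols} to pick the basis among the columns of $A$.

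For the forward direction, suppose $\rank A = r > 0$. By Lemma \ref{l_matrix_basis_from_cols} there is a basis of $\Col A$ consisting of $r$ columns of $A$, say $[A]_{j_1},\dots,[A]_{j_r}$. Since $r \geq 1$, this basis contains the column $[A]_{j_1}$. Independence (Definition \ref{d_indep}) applied to the coefficient vector $\beta = (1,0,\dots,0) \neq \mb{0}$ gives $[A]_{j_1} \neq \mb{0}$. Apartness of vectors means some coordinate is apart from $0$, so there is an $i$ with $(A)_{i j_1} \neq 0$. This is the required entry.

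For the converse, suppose $(A)_{ij} \neq 0$ for some $i,j$. Then $\mb{a} := [A]_j \neq \mb{0}$, and we show $\rank A \neq 0$. Because $A$ is ranked, $\rank A$ is a natural number, and equality of natural numbers is decidable. So it suffices to refute $\rank A = 0$. If $\rank A = 0$, then $\Col A$ has the empty set as a basis, so $\Col A = \Span \emptyset = \{\mb{0}\}$. Since $\mb{a} \in \Col A$, this gives $\mb{a} = \mb{0}$, which contradicts $\mb{a} \neq \mb{0}$; here we use that $x = 0$ and $x \neq 0$ are contradictory for reals. Hence $\rank A \neq 0$, and therefore $\rank A > 0$.

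There is a more direct alternative that avoids the case distinction: apply Theorem \ref{t_steinitz} with $X = \{\mb{a}\}$, which is independent because $\mb{a}\neq\mb{0}$ and $\lambda \neq 0$ implies $\lambda\mb{a} \neq \mb{0}$, and with $Y$ a basis of $\Col A$, which spans $\mb{a}$. This yields $1 \leq \rank A$ at once. I would present this Steinitz argument, since it stays positive throughout.

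The only step that needs care is the converse. There we must turn "$\Col A$ contains a vector apart from $\mb{0}$" into a positive lower bound on the dimension. This is where independence in the strong, constructive sense of Definition \ref{d_indep} is needed for the single vector $\mb{a}$, and where the well-definedness of dimension enters through Theorem \ref{t_steinitz}. The forward direction is routine.
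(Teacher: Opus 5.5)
Your proof is correct and follows the paper's argument. The forward direction takes a basis of $\Col A$ among the columns and applies independence with coefficient $1$ to one of them. The converse refutes $\rank A = 0$ via $\Col A = \Span\{\mb{0}\}$, which is legitimate because equality on $\mathbb{N}$ is decidable.

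Your Steinitz variant is also valid, but it is not ``more positive''. The paper proves $n \leq m$ in Theorem~\ref{t_steinitz} by contradiction, and for $m = 0$ that is exactly the refutation you already gave. So there is no reason to prefer it over the direct argument.
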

\begin{proof}
Suppose $\rank A > 0$ and pick a basis of $\Col A$ from the columns of $A$.
This basis contains at least one column $\mb{x}$.
Since the set $\{ \mb{x} \}$ is independent, we see that $1 \mb{x} \neq \mb{0}$ and thus $A$ has an entry apart from $0$.
Next assume $A$ has an entry apart from $0$.
Because $A$ is ranked, it suffices to derive a contradiction from $\rank A = 0$.
This is clearly contradictory because $\rank A = 0$ is equivalent to $\Col A = \Span \{ \mb{0} \}$.
\end{proof}

\begin{theorem} \label{t_gauss}
Let $A$ be a ranked matrix.
By applying elementary row operations and swapping columns,
$A$ can be transformed to reduced row echelon form.
\end{theorem}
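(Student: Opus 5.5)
We argue by induction on $r = \rank A$, running the usual Gauss--Jordan algorithm but steering the column swaps with the rank information. The elementary row operations preserve the rank by Proposition \ref{p_elem_row_operations_preserve_col_rank}. Column swaps leave the column space unchanged by Proposition \ref{p_elem_col_operations_preserve_col_span}. So every intermediate matrix stays ranked with rank $r$.

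The key device is Lemma \ref{l_matrix_basis_from_cols}. Choose $r$ columns of $A$ forming a basis of $\Col A$ and swap them to the front, so $[A]_1,\dots,[A]_r$ are independent and every other column lies in their span. If $r = 0$, then $\Col A = \Span\{\mb{0}\}$, every column is $\mb{0}$, and $A$ is already the zero matrix, which is in reduced row echelon form. If $r > 0$, the column $[A]_1$ is apart from $\mb{0}$ (independence of a single vector), so some entry $a_{i1} \neq 0$. We swap row $i$ to the top, scale it by $a_{i1}^{-1}$, and subtract multiples of it to make the rest of column $1$ equal to $0$.

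Next we pass to the submatrix $A'$ of rows $2,\dots$ and columns $2,\dots$ and claim it is ranked of rank $r-1$. The columns $2,\dots,r$ of the full (row-reduced) matrix are still independent, and their first-row-deleted versions should also be independent. Indeed, if $\beta \neq \mb{0}$ with $\beta \in \reals^{r-1}$, then $\sum_{j\ge2}\beta_j [A]_j$ differs from $\sum_{j \geq 2} \beta_j [A]_j - c[A]_1$ only in the first coordinate. Choosing $c$ to kill that coordinate gives a nontrivial combination of $[A]_1,\dots,[A]_r$ whose first entry is $0$. By independence this combination is apart from $\mb{0}$, so it is apart in some coordinate $k \geq 2$, i.e.\ the truncated combination is apart from $\mb{0}$. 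For spanning, each column $j > r$ is a combination of columns $1,\dots,r$; deleting the first row and using that column $1$ now has zeros below the top entry shows the truncated column lies in the span of the truncated columns $2,\dots,r$. Hence $A'$ has a basis of $r-1$ columns and $\rank A' = r-1$.

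By induction, $A'$ can be brought to reduced row echelon form by row operations and column swaps. These operations lift to $A$ without disturbing the first column, which is $(1,0,\dots,0)^T$: row operations on rows $2,\dots$ leave its lower zeros intact, and column swaps among columns $2,\dots$ do not touch it. Finally we clear the entries of the first row above each pivot of $A'$ by subtracting multiples of the pivot rows. This is a finite sequence of row operations and it does not touch any other pivot column.

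The main obstacle is the rank-drop step for $A'$, specifically verifying independence constructively via apartness rather than by contradiction. We handle it with the explicit $c$-correction above, together with Lemma \ref{l_beta_entry_apart_zero}. Putting the basis columns first is what avoids any undecidable test of the form ``is this column zero?'', the source of the LPO obstruction in Proposition \ref{p_partial_pivoting_lpo}. It also guarantees that the zero rows at the bottom are exactly $\mb{0}$, because after elimination the last $m-r$ rows are forced to vanish by the rank count.
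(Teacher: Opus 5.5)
Your proof is correct and is essentially the paper's argument. It inducts on the rank and pivots on an entry apart from $0$. It shows the truncated submatrix has rank one less by subtracting a multiple of the pivot column $(1,0,\dots,0)^T$ to kill the first coordinate and then invoking independence, and finally lifts the operations and clears the first row. The only difference is cosmetic: you move a column basis (Lemma \ref{l_matrix_basis_from_cols}) to the front before pivoting, whereas the paper pivots on any entry apart from $0$ and uses Lemma \ref{l_independent_austausch} to obtain a column basis containing the pivot column. One small correction: your independence step needs columns $1,\dots,r$ (not just $2,\dots,r$) to remain independent after the row operations, which is exactly what Lemmas \ref{l_elem_row_are_extensional} and \ref{l_linear_map_basis_to_basis} give.
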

\begin{proof}
We prove this by induction on the rank of $A$.
If $\rank A = 0$, then clearly all entries of $A$ are zero and we are done.
Suppose that we have shown the result for $\rank A = n$.
So we consider a matrix with rank $n + 1$.
In this case $\rank A > 0$, so there exists an element apart from $0$ from Lemma \ref{l_apart_0_mat}.
By swapping rows and columns, we can move this element to the first row and column.
Next we apply row multiplications and additions to transform the first column into $(1 \ 0 \ ... \ 0)^T$.
Call the resulting matrix $A'$.
By Proposition \ref{p_elem_row_operations_preserve_col_rank} and \ref{p_elem_col_operations_preserve_col_span} we see $\rank A = \rank A' > 0$.
Let $B'$ be a basis of $\Col A'$ consisting of columns of $A'$, one of which is the first column $\mb{a}'_1 = (1 \ 0 \ ... \ 0)^T$.
This is possible using Lemma \ref{l_independent_austausch}.
If this is in fact the only column of $B'$ then we are done.
In this case all columns of $A'$ are in the span of $\mb{a}'_1$ and $A'$ is in reduced row echelon form.
So suppose $B'$ consists of more vectors and write $B' = \{\mb{b}'_0 = \mb{a}'_1, \mb{b}'_1,...,\mb{b}'_n \}$.
Consider the submatrix $A''$ of $A'$ obtained by dropping the first row and column of $A'$.
Clearly the column span of $A''$ is spanned by the vectors $B''$ consisting of those in $B'$ with their first elements removed and excluding $\mb{a}_1'$.
Write $B'' = \{\mb{b}''_1,...,\mb{b}''_n \}$, where obviously $\mb{b}''_i$ is $\mb{b}'_i$ with its  first element removed.

Next, we prove that $B'' = \{\mb{b}''_1,...,\mb{b}''_n \}$ is independent and hence a basis of $A''$.
Let $\beta \in \mathbb{R}^n$ be apart from $\mb{0}$ and look at $\sum_{i > 0} \beta_i \mb{b}''_i$.
As $B'$ is a basis we know $\sum_{i \geq 0} \beta_i \mb{b}'_i \neq \mb{0}$.
Find a $j$ such that $\big( \sum_{i \geq 0} \beta_i \mb{b}'_i \big)_j \neq 0$.
If $j > 1$ then obviously $\sum_{i > 0} \beta_i \mb{b}''_i$ is apart from $\mb{0}$.
This holds because $\mb{b}'_0 = \mb{a}_1 = (1 \ 0 \ ... \ 0)^T$ and hence $\big( \sum_{i \geq 0} \beta_i \mb{b}'_i \big)_j = \big( \sum_{i > 0} \beta_i \mb{b}''_i \big)_j$.
So suppose $j = 1$ and consider the sum $\sum_{i \geq 0} \beta_i \mb{b}'_i - \big( \sum_{i \geq 0} \beta_i \mb{b}'_i \big)_1 \mb{a}'_1$.
We may also write this as $\sum_{i \geq 0} \gamma_i \mb{b}'_i$, where $\gamma_i = \beta_i$ if $i > 0$ and $\gamma_0 = - \big( \sum_{i > 0} \beta_i \mb{b}'_i \big)_1$.
Obviously, the vector $\gamma$ is apart from $\mb{0}$ and then so is  $\sum_{i \geq 0} \gamma_i \mb{b}'_i$ from the independence of $B'$.
As the first entry of $\sum_{i \geq 0} \gamma_i \mb{b}'_i$ is $0$, there must be a $k > 1$ such that $\big(\sum_{i \geq 0} \gamma_i \mb{b}'_i\big)_k \neq 0$.
Again we see $\big(\sum_{i \geq 0} \gamma_i \mb{b}'_i\big)_k = \big( \sum_{i > 0} \beta_i \mb{b}''_i \big)_k$.
Thus also in the case $j = 1$ we can find a $k$ such that $\big( \sum_{i > 0} \beta_i \mb{b}''_i \big)_k$ is apart from $0$.

We have established that $B''$ is a basis of $A''$ and hence $A''$ is a matrix with rank $n$.
Thus we can reduce $A''$ to reduced row echelon form.
Now consider what happens if we apply the row and column operations used to reduce $A''$ to the matrix $A'$.
The first column of $A'$ will not be affected.
Indeed, the very first element $A'_{1,1}$ will be completely untouched, and the row operations will only work on the other elements of the first column of $A'$.
However, as all these entries are $0$, this will not change this column.
We now have a matrix whose first column is $(1 \ 0 \ ... \ 0)^T$ and where the submatrix consisting of all but the first row is in reduced row-echelon form.
Thus by subtracting suitable multiples of leading one rows from the first row, we are done.
\end{proof}

Note that we refrained from talking about \textit{the} reduced row echelon form of a matrix.
Because we allow column swapping in reducing matrices, the Gauss-Jordan procedure will not produce a unique reduced row echelon form.
The best we can say is that the number of leading one columns is unique. 
This is simply the rank of the matrix.

Swapping columns also means it is not directly clear how we can use Theorem \ref{t_gauss} to solve systems of linear equations.
Consider the system $A\mb{x} = \mb{b}$ with augmented matrix $A | \mb{b}$.
If the rank of both $A$ and $A | \mb{b}$ is known, then we can solve our system.
When $\rank A | \mb{b} > \rank A$ there are no solutions.
If, on the other hand, $\rank A | \mb{b} = \rank A$, then we can choose a basis of $\Col A | \mb{b}$ from among the columns of $A$.
Using those columns as leading one columns when reducing $\Col A | \mb{b}$, we can reduce $\Col A | \mb{b}$ to reduced row echelon form without swapping the final column.
Essentially, we exclude $\mb{b}$ from the basis $B'$ in the proof of Theorem \ref{t_gauss}.
All column swaps left the final column $\mb{b}$ of $A | \mb{b}$ untouched. 
Thus, we can simply read off our solution space from the reduced form of $A | \mb{b}$ as usual.
If we do not know the rank of $A | \mb{b}$, then there is nothing we can say about the system $A\mb{x}=\mb{b}$.
The only potentially interesting case remaining is when $\rank A | \mb{b}$ known, but $\rank A$ unknown.
However, the simple system $\rho x = 1$ in $\mathbb{R}^1$ shows that also here there is not much to say.
Therefore, in practical real number arithmetic, our constructive version of Gauss-Jordan elimination is of no use.
We need to know the rank of our matrix before we can apply Theorem \ref{t_gauss}.
Yet, Gauss-Jordan elimination is one of the ways we usually calculate the rank of a matrix.
The following proposition confirms our suspicions.

\begin{proposition}
Let $A$ be a matrix that can be transformed into reduced row echelon form using elementary row operations and column swapping.
Then $A$ has a rank.
\end{proposition}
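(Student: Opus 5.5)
Let $R$ be the reduced row echelon form obtained from $A$ by a finite sequence of elementary row operations and column swaps. The plan is to show first that $R$ has a rank, and then to transport this rank back to $A$ along the sequence of operations.

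\emph{Step 1: $R$ is ranked.} By Definition \ref{d_row_echelon_form}, every row of $R$ either has a leading entry apart from $0$ (which equals $1$ by Definition \ref{d_reduced_row_echelon_form}) or is the zero row. Hence the number $r$ of nonzero rows is a definite natural number, and so is the set of $r$ leading-one columns. I will show that these $r$ columns form a basis of $\Col R$.
\begin{itemize}
\item \emph{Independence.} The leading-one columns are the standard unit vectors $\mb{e}_1,\dots,\mb{e}_r$, since all entries above and below a leading one are $0$. If $\beta\neq\mb{0}$, then $\sum_i\beta_i\mb{e}_i$ has $i$-th entry $\beta_i$, so it is apart from $\mb{0}$.
\item \emph{Spanning.} Every column of $R$ has zero entries in rows $r+1,\dots$, because those rows are identically $\mb{0}$. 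So every column equals $\sum_{i\le r}R_{i,j}\mb{e}_i$.
\end{itemize}
Hence $\rank R=r$.

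\emph{Step 2: undo the operations one at a time.} I would reverse the sequence of operations and show that each single operation preserves rankedness in the backward direction.

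\emph{Column swaps.} A column swap does not change the column space, by Proposition \ref{p_elem_col_operations_preserve_col_span}. So if the matrix after the swap has a rank, the matrix before it has the same rank.

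\emph{Row operations.} Every elementary row operation $T$ has an inverse that is again an elementary row operation:
\begin{itemize}
\item swapping two rows is its own inverse;
\item multiplying a row by $\lambda\neq0$ is undone by multiplying it by $\lambda^{-1}$;
\item adding $\kappa$ times one row to another is undone by adding $-\kappa$ times that row.
\end{itemize}
So if $B=T(A_0)$ is ranked, then $A_0=T^{-1}(B)$ is obtained from the ranked matrix $B$ by an elementary row operation. By Proposition \ref{p_elem_row_operations_preserve_col_rank}, $A_0$ is ranked with $\rank A_0=\rank B$. That proposition only assumes its input matrix is ranked, so it applies directly to $B$.

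\emph{Step 3: conclude.} Induction on the length of the sequence of operations, working backwards from $R$, gives $\rank A=r$.

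\emph{Main obstacle.} The only delicate point is Step 1, and it rests on reading the definition of reduced row echelon form constructively: each row is positively given as either zero or having a leading entry apart from $0$. This is what makes $r$ computable. Once that is granted, the argument reduces to the earlier propositions together with the observation that row operations are invertible.
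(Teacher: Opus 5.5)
Your proof is correct and takes the same approach as the paper. The reduced form is ranked, with rank equal to the number of leading-one columns, and you carry this rank back to $A$ by undoing the operations one at a time via Propositions \ref{p_elem_col_operations_preserve_col_span} and \ref{p_elem_row_operations_preserve_col_rank}; beyond the paper's version, you also check why the leading-one columns form a basis and why inverses of elementary row operations are again elementary row operations.
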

\begin{proof}
A matrix in reduced row echelon form is clearly ranked.
Its rank is the number of columns that contain a leading one.
We can transform the reduced row echelon matrix back into $A$ by sequentially applying the inverses of the operations used to obtain the reduced form.
By Proposition \ref{p_elem_col_operations_preserve_col_span} and \ref{p_elem_row_operations_preserve_col_rank} the rank of our matrix is not affected by these inverse operations.
\end{proof}

This leaves us with a somewhat unsatisfactory feeling.
Computing the rank of a matrix is generally undecidable.

\begin{proposition}
Let $T$ be the statement:
`Every matrix has a rank.'.
Then $T$ is equivalent to LPO.
\end{proposition}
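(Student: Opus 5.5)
Both directions are short, and I would handle them separately.

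For $T \Rightarrow$ LPO, take an arbitrary $\rho \in \reals$ and apply $T$ to the $1 \times 1$ matrix $\lvert \rho \rvert$. By $T$ it has a rank $r$, and $r \in \{0,1\}$ because $\Col \lvert \rho \rvert \subseteq \reals^1$ and, by Theorem \ref{t_steinitz}, an independent set in $\Span\{\mb{1}\}$ has at most one element. Since $r$ is a natural number we can decide which case holds.
\begin{itemize}
\item If $r = 0$, the basis is empty, so $\Col \lvert \rho \rvert = \Span\{\mb{0}\}$ and hence $\rho = 0$.
\item If $r = 1$, then $\rank \lvert \rho \rvert > 0$, and Lemma \ref{l_apart_0_mat} gives $\rho \neq 0$.
\end{itemize}
So $\rho = 0 \lor \rho \neq 0$ for every real $\rho$. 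This yields LPO in the usual way: for $\alpha \in \mathcal{C}$, set $\rho = \sum_n \alpha_n 2^{-n}$. Then $\rho = 0$ gives $\forall n\, \alpha_n = 0$, and $\rho \neq 0$ lets us search for an $n$ with $\alpha_n = 1$. This search terminates because $\rho > 0$ bounds the tail, so some $\alpha_n = 1$ with $n$ below an explicit bound.

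For LPO $\Rightarrow T$, the key fact is that under LPO every real is either $0$ or apart from $0$. Indeed, given $x$, define $\alpha_n = 1$ iff $\lvert x \rvert > 2^{-n}$ at the $n$-th approximation (this is decidable on rationals), and apply LPO. With this, I would run the classical greedy algorithm behind Lemma \ref{l_matrix_basis_from_cols}, in the form of Lemma \ref{l_trich}. Go through the columns $\mb{a}_1, \dots, \mb{a}_k$ of $A$, maintaining an independent set $B$ of columns, starting from $B = \emptyset$. For each new column $\mb{a}$, compute $\mb{d} = \mb{a} - proj_{\Span B}\mb{a}$ with Lemma \ref{l_proj}, and decide $\norm{\mb{d}} = 0 \lor \norm{\mb{d}} > 0$.
\begin{itemize}
\item If $\norm{\mb{d}} > 0$, then $\mb{a}$ is independent from $B$ by Lemma \ref{l_proj_indep}, and we add it to $B$; the enlarged set stays independent by Lemma \ref{l_indep_add}.
\item If $\norm{\mb{d}} = 0$, then $\mb{a} = proj_{\Span B}\mb{a} \in \Span B$, and we skip it.
\end{itemize}

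At the end, $B$ is independent and every column of $A$ lies in $\Span B$, so $B$ is a basis of $\Col A$. By Proposition \ref{p_basis_def} the rank is well defined, and it equals $\lvert B \rvert$.

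The only mildly delicate step is the first direction: justifying that the rank can only be $0$ or $1$ and converting each case into the decision about $\rho$. The argument above handles this. The second direction is routine once LPO is recast as decidability of $x = 0$ for reals.
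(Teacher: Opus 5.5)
Your proof is correct. The direction $T \Rightarrow$ LPO is the paper's own argument with the $1 \times 1$ matrix $\lvert \rho \rvert$. You simply fill in what the paper dismisses as ``clearly'': why the rank is $0$ or $1$, the use of Lemma \ref{l_apart_0_mat} in the rank-$1$ case, and the standard passage from $\rho = 0 \lor \rho \neq 0$ to LPO.

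For LPO $\Rightarrow T$ you take a genuinely different route. The paper observes that under LPO every pivot test is decidable, so ordinary Gauss--Jordan elimination brings $A$ to reduced row echelon form. It then reads off the rank as the number of leading-one columns. That last step implicitly uses the preceding proposition, that a matrix reducible to reduced row echelon form is ranked, together with Propositions \ref{p_elem_row_operations_preserve_col_rank} and \ref{p_elem_col_operations_preserve_col_span}.

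You instead work entirely in the vector-space setting of Section \ref{sec_vector}. LPO collapses the three-way alternative of Lemma \ref{l_trich} into the exact dichotomy $\norm{\mb{a} - proj_{\Span B}\,\mb{a}} = 0$ or $> 0$. Lemmas \ref{l_proj}, \ref{l_proj_indep} and \ref{l_indep_add} then keep the collected columns independent, while every skipped column lies in their span. This is precisely the ``implicit classical proof'' of Lemma \ref{l_matrix_basis_from_cols}, which the paper says fails constructively because membership in a span is undecidable. Your argument makes explicit that LPO restores exactly that decision.

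What your route buys is self-containment. It avoids elimination and the rank-invariance propositions altogether, and it produces a basis of $\Col A$ made of columns of $A$, with independence and spanning checked explicitly. The paper's one-line appeal to ``the usual process'' leaves those checks to the reader. In exchange, the paper's route is shorter given the machinery of Section \ref{sec_gauss}, and it also yields the reduced row echelon form itself, which is what you need for actually solving linear systems.
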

\begin{proof}
Suppose that every matrix has a rank.
Let $\rho \in \mathbb{R}$ and consider the matrix $A = \lvert \rho \rvert$.
Clearly, knowing $\rank A = 0 \lor \rank A = 1$ is equivalent to $\rho = 0 \lor \rho \neq 0$.
Thus we can decide real number ordering and this is equivalent to LPO.
Next, assume LPO, then again we can decide $x = y \lor x \neq y$ for all real numbers $x,y$.
As seen before, the usual process of Gauss-Jordan elimination works.
We can reduce $A$ to reduced row echelon form and read its rank from the number of columns with a leading one.
\end{proof}

We will now investigate ways to approximate the rank of a matrix in some sense.
The notion of an $\varepsilon-rank$, denoted with $\rank_\varepsilon A$, captures that there are at least $\rank_\varepsilon A$ independent columns among the columns of $A \in \mathbb{R}^{n \times m}$.
This mirrors the findings of \cite{CtDoLS}, in particular Proposition 1.
If $A$ is ranked, then $\rank_\varepsilon A$ is obviously a lower bound on $\rank A$.
However, $\rank_\varepsilon A$ also exists for matrices $A$ that are not necessarily ranked.
We obtain an $\varepsilon$-rank of $A$ by computing a $\varepsilon$-\textit{row echelon form} of $A$ (Definition \ref{d_epsilon_row}).

As stated, $\rank_\varepsilon A = k$ means that the column space of $A \in \mathbb{R}^{m \times n}$ has an `approximate basis' of $k$ columns.
These columns are known to be independent.
The remaining $\min(n,m) - k$ columns of $A$, of which the independence is yet unknown, are negligible in some sense.
They contribute little to solutions of the system $A \mathbf{x} = \mathbf{0}$ (Proposition \ref{p_epsilon_kernel}).
This may seem a fairly useless result, as determining $\mathbf{b} = \mathbf{0}$ for the system $A \mathbf{x} = \mathbf{b}$ is undecidable.
However, systems of the form $A \mathbf{x} = \mathbf{0}$ naturally occur when finding eigenvectors.

\begin{definition} \label{d_epsilon_row}
Let $\varepsilon > 0$, then the $n \times m$ matrix $A$ is in $\bm{\varepsilon}$\textbf{-row echelon form} if:
\begin{enumerate}
\item the first $k$ columns of $A$ are in triangular form with diagonal entries apart from $0$,
\item the matrix $E$ consisting of the lower $n - k$ rows and columns satisfies $\norm{E}_{\max} < \varepsilon$.
\end{enumerate}
Such a matrix for instance looks like this:
\begin{equation*}
\begin{vmatrix}
+ & . & . & . & . & .\\
0 & + & . & . & . & .\\
0 & 0 & + & . & . & .\\
0 & 0 & 0 & * & * & *\\
0 & 0 & 0 & * & * & *\\
\end{vmatrix}.
\end{equation*}
Here, $+$ stands for an entry apart from $0$, $*$ is an entry whose absolute value does not exceed $\varepsilon$ and $.$ is an arbitrary entry.
We will also call the first $k$ columns that are in row echelon form the \textbf{leading} columns of the $\varepsilon$-row echelon form.
The matrix $E$ is called the \textbf{residual matrix}.
\end{definition}

As in the ranked case, we can reduce any matrix $A$ to a $\varepsilon$-row echelon form for any $\varepsilon > 0$ using elementary row and column operations.
In this setting, however, it is useful to only consider those elementary operations of `small norm'.
This ensures a $\varepsilon$-row echelon form of $A$ is `close' to $A$ in some sense, which we when we investigate approximate kernels of $A$ in Proposition \ref{p_epsilon_kernel}.
We call these operations \textit{stable} elementary operations.

\begin{definition}
We call the elementary operations of row swapping, row addition and column swapping the \textbf{stable} (elementary) operations.
If $A_\varepsilon$ is an $\varepsilon$-row echelon form that is obtained from the matrix $A$ by applying stable operations, then we also call $A_\varepsilon$ an $\varepsilon$-row echelon form \textbf{of} $A$.
\end{definition}

Using a limited number of stable elementary operations, we can reduce any matrix to a $\varepsilon$-row echelon form.
This is the content of Proposition \ref{p_epsilon_reduction}.
Moreover, the resulting $\varepsilon$-row echelon form is close to the original matrix in the \textit{maximum} norm.
Thus we need a small lemma on the maximum norm of the product of two matrices before we can derive Proposition \ref{p_epsilon_reduction}.

\begin{definition}
Let $A \in \mathbb{R}^{n \times m}$, then the \textbf{maximum norm} $\norm{A}_{\max}$ of $A$ is defined as $\max_{i,j} \lvert A_{i,j} \rvert$.
\end{definition}

\begin{lemma} \label{l_max_norm_product}
Let $A \in \mathbb{R}^{n \times m}$ and $B \in \mathbb{R}^{m \times k}$, then $\norm{AB}_{\max} \leq m \norm{A}_{\max}\norm{B}_{\max}$.  
\begin{proof}
Exactly as in classical mathematics, for all column indices $1 \leq i \leq k$ we have: \\
$
    \norm{[AB]_i}_{\max} = \norm{\sum_{j = 1}^m [A]_j [B]_{ji}} \leq \sum_{j = 1}^m \norm{[A]_j [B]_{ji}}_{\max} \leq \sum_{j = 1}^m \norm{A}_{\max}\norm{B}_{\max} = m \norm{A}_{\max}\norm{B}_{\max}
$.
\end{proof}
\end{lemma}

\begin{proposition} \label{p_epsilon_reduction}
Let $A$ be an $n \times m$ matrix and let $\delta, \varepsilon > 0$, then an $\varepsilon$-row echelon form $A_\varepsilon$ of $A$ exists.
Moreover we can write $A_\varepsilon = R_1 S_1\ ... \ R_k S_k A C_1 \ ... C_k$ with $k \leq \min(n,m)$.
Here for each $i$, $R_i$ is an identity matrix where two rows are swapped and $C_i$ is an identity matrix where two columns are swapped.
Finally, for each $i$, $S_i$ is a row addition matrix satisfying $\norm{S_i}_{\max} < 1 + \delta$.
\end{proposition}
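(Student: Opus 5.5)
The plan is to run Gaussian elimination with \emph{approximate full pivoting}, inducting on the number of rows and columns, and to decide at each step whether the remaining submatrix is already $\varepsilon$-small. Suppose that after $i$ steps the current matrix $A^{(i)}$ has its first $i$ columns in upper triangular form with diagonal entries apart from $0$. Let $E^{(i)}$ be the lower-right $(n-i)\times(m-i)$ block. Since it has finitely many entries, we can decide $\norm{E^{(i)}}_{\max} < \varepsilon$ or $\norm{E^{(i)}}_{\max} > \varepsilon/2$. In the first case we stop: $A^{(i)}$ is in $\varepsilon$-row echelon form with $k=i$ leading columns. If the block is empty, which happens once $i=\min(n,m)$, we stop as well. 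This already gives $k\leq \min(n,m)$.

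In the second case we do not simply take any entry apart from $0$. Instead we pick an approximate maximal entry. Let $M=\norm{E^{(i)}}_{\max}>\varepsilon/2$ and choose $\eta>0$ with $\eta<M$ and $M/(M-\eta) < 1+\delta$. For each entry $e$ of $E^{(i)}$ decide $|e|>M-\eta$ or $|e|<M$. At least one entry must land in the first alternative, since otherwise all entries would be $<M$, contradicting the definition of the maximum; we argue this by the usual constructive fact that $\max$ of finitely many reals exceeding $M-\eta$ is witnessed by some entry. Call this pivot $p$, with $|p|>M-\eta>0$, so $p\neq 0$. A single row swap $R_{i+1}$ and a single column swap $C_{i+1}$ move $p$ to position $(i+1,i+1)$. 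Both swaps act only on indices $\geq i+1$, so they preserve the triangular structure of the first $i$ columns.

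Next we clear the entries below $p$ in column $i+1$ with one row-addition matrix $S_{i+1}$. It adds $-e_j/p$ times row $i+1$ to each row $j>i+1$. These operations commute, so they combine into a single matrix that is the identity plus one column of multipliers. Its entries are $1$, $0$, or $-e_j/p$, where $|e_j|\le M$, so $|e_j/p|\le M/(M-\eta)<1+\delta$. Hence $\norm{S_{i+1}}_{\max}<1+\delta$. Rows $1,\dots,i$ are untouched, and in columns $1,\dots,i$ rows $\geq i+1$ were zero, so they stay zero. This produces $A^{(i+1)}$ with $i+1$ triangular leading columns.

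The ordering in the product formula needs care. The column swaps $C_i$ act on the right, and row operations commute with column operations, so all $C_i$ can be collected at the right end. The row operations appear in the order $R_1$, $S_1$, $R_2$, \dots, which yields the product $\cdots S_2R_2S_1R_1A$. We would therefore index them so that the displayed product $R_1S_1\cdots R_kS_k$ matches, reading the steps in reverse, or simply relabel. When a swap is not needed we take $R_i$ or $C_i$ to be the identity, which counts as a trivial swap.

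The main obstacle is the pivot selection. A naive pivot that is merely apart from $0$ gives no bound on the multipliers. We must constructively secure an entry within a factor $1+\delta$ of the maximum, using approximate decidability of the finite maximum as above. This is where $\delta$ enters, and we cannot take $\delta=0$, since exact maximal pivot selection is not decidable.
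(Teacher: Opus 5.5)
Your argument is correct, and it differs from the paper's mainly in where the pivot is searched for.

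The paper inducts on the number of columns. It decides $\lvert a_{ij}\rvert<\varepsilon$ or $\lvert a_{ij}\rvert>0$ entrywise. If some entry is apart from $0$, it swaps that entry's column to the front. Then, as in its base case $m=1$, it does approximate \emph{partial} pivoting inside that one column, choosing an entry exceeding $(1+\delta)^{-1}$ times the column maximum, and recurses on the lower-right submatrix.

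You instead make one decision per step, $\norm{E^{(i)}}_{\max}<\varepsilon$ or $\norm{E^{(i)}}_{\max}>\varepsilon/2$, and take an approximate maximum over the whole residual block. That is approximate \emph{full} pivoting.

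The core constructive device is the same in both: find an approximate maximiser among finitely many reals by deciding each entry, and rule out the all-small case by contradiction. Since the multipliers only involve the pivot column, either choice yields $\norm{S_i}_{\max}<1+\delta$.
\begin{itemize}
\item Your pivot additionally dominates the entire block. This would also control growth of the residual, though Corollary~\ref{c_epsilon_form_norm} uses only the bound on $\norm{S_i}_{\max}$.
\item Your formulation cannot be misread the way the paper's phrase ``convert the first column to $(a_{i,j}\ 0\ \dots\ 0)^T$'' can. Pivoting literally on $a_{i,j}$, which is known only to be apart from $0$, would give no multiplier bound.
\item The paper's route, for its part, needs only a column maximum at each step.
\end{itemize}

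One small repair concerns the order of the factors. Your natural product is $S_kR_k\cdots S_1R_1AC_1\cdots C_k$. Relabelling indices cannot turn the word $S\,R\,S\,R\cdots S\,R$ into the required $R\,S\,R\,S\cdots R\,S$.

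Instead, write $S_iR_i=R_i\tilde S_i$ with $\tilde S_i:=R_iS_iR_i$. If $R_i$ swaps rows $i$ and $r\geq i$, then $\tilde S_i$ adds multiples of row $r$ to the other rows. Its entries are those of $S_i$, permuted, so $\norm{\tilde S_i}_{\max}=\norm{S_i}_{\max}<1+\delta$. Equivalently, you eliminate using the pivot row where it sits and swap afterwards.

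This gives $A_\varepsilon=R_k\tilde S_k\cdots R_1\tilde S_1AC_1\cdots C_k$, and reversing the indices of the row factors yields exactly the displayed form. The paper's proof passes over this point too.
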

\begin{proof}
We use induction on the number of columns, so first take $m = 1$.
If $\lvert a_{i,1} \rvert < \varepsilon$ for all $i$ we are done.
When $\lvert a_{i,1} \rvert > 0$ for some $i$, then we know $\max_{i} \lvert a_{i,1} \rvert > 0$.
For each $j$, decide $\lvert a_{j,1} \rvert > (1 + \delta)^{-1} \max_{i} \lvert a_{i,1} \rvert  \ \lor \ \lvert a_{j,1} \rvert <  \max_{i} \lvert a_{i,1} \rvert$. 
As it is contradictory that $\lvert a_{j,1} \rvert < \max_{i} \lvert a_{i,1} \rvert$ for each $j$, we find a $j$ with $\lvert a_{j,1} \rvert > (1 + \delta)^{-1} \max_{i} \lvert a_{i,1}\rvert $.
Move this $j$'th entry of the matrix $A$ to the first row by applying row swaps.
Subsequently we subtract suitable multiples of the first row from the other rows.
The  factor used for row $k$ is $\mu_k=a_{k,1} / a_{1,1}$, and hence $\lvert \mu_k \rvert < \max_i \lvert a_{i,1} \rvert / \big((1 + \delta)^{-1}  \max_i \lvert a_{i,1} \rvert \big) = 1 + \delta$.
This proves the base case $m = 1$.
For $m > 1$, either $\lvert a_{i,j} \rvert < \varepsilon$ for all $i,j$ and we are done, or we can find $i,j$ such that $\rvert a_{i,j} \lvert > 0$.
Swap column $j$ and $1$ and convert the first column to $(a_{i,j} \ 0 \ ... \ 0)^T$ as in the base case.
The sub-matrix of our transformed matrix obtained by omitting the first row and column is reducible to a $\varepsilon$-row echelon form by the induction hypothesis.
The used row operations leave the first column of our transformed matrix unaffected and we are done.
Note that the induction step uses at most one column swap, one row swap and $n - 1$ row additions.
Here we also count a swap matrix if we are in fact not swapping at all.
Stated otherwise, an identity matrix can also count as a row swap or column swap matrix.
The $n - 1$ row additions are all part of a single matrix $S_i$.
Since $k$ also equals the number of diagonal entries apart from $0$ in the $\varepsilon$-row echelon form we produced,
it is immediately clear that $k \leq \min(n,m)$.
\end{proof}

\begin{corollary} \label{c_epsilon_form_norm}
Let $A$ be an $n \times m$ matrix and let $\delta, \varepsilon > 0$ and define $k = \min(n,m)$.
Then there is an $\varepsilon$-row echelon form $A_\varepsilon$ of $A$ that satisfies $\norm{A_\varepsilon}_{\max} \leq \norm{A}_{\max} n^{2 k}m^{k}(1 + \delta)^{k}$.
\begin{proof}
This follows directly from the previous proposition and Lemma \ref{l_max_norm_product}. 
\end{proof}
\end{corollary}

Proposition \ref{p_epsilon_reduction} is essentially the partial pivoting strategy of numerical linear algebra.
There it is necessary to obtain numerical stability of Gauss-Jordan elimination.
Since finding an $j$ such that $\lvert \mathbf{a}_j \rvert = \max_i \lvert \mathbf{a}_i \rvert$ is undecidable, we need the relaxation factor $(1 + \delta)^{-1}$.
Moreover norm estimates on matrices resulting from Gaussian elimination, as in Corollary \ref{c_epsilon_form_norm}, are also ubiquitous in classical, numerical linear algebra.
See for instance the classic paper \cite{EAoDMoMI}.
Having shown existence of $\varepsilon$-row echelon forms, we can define the $\varepsilon$-rank of a matrix.

\begin{definition} \label{d_epsilon_rank}
Let $A$ be a matrix and $\varepsilon > 0$.
The number of leading columns of a $\varepsilon$-row echelon form of $A$ is called an \textbf{$\varepsilon$-rank} of $A$.
We will also denote this as $\rank_\varepsilon A$.
\end{definition}

Note how Definition \ref{d_epsilon_row} does not define $\rank_\varepsilon A$ as \textit{the} $\varepsilon$-rank of the matrix $A$.
That would be incorrect. 
The operation assigning to each matrix $A$ and $\varepsilon > 0$ the natural number $\rank_\varepsilon A$, which is implicit in Proposition \ref{p_epsilon_reduction}, is not a \textit{function}.
It is not an \textit{extensional} operation, if $A = B$ and $\varepsilon = \delta$, this does not mean $\rank_\varepsilon A = \rank_\delta B$.
This is because we crucially rely on deciding propositions of the form $\lvert x \rvert > 0 \lor \lvert x \rvert < \varepsilon$.
We may view this as another operation mapping $(x, \varepsilon)$ to $\{0, 1\}$.
Here $0$ means $\lvert x \rvert > 0$ and only $\lvert x \rvert > 0$ is established and $1$ means $\lvert x \rvert < \varepsilon$ is known.
This operation is also inherently not extensional.
The answer to the question `$\lvert x \rvert > 0$ or $\lvert x \rvert < \varepsilon$' depends on exactly what form we know $x$ and $\varepsilon$ and how we establish which leg of the disjunction we choose.

Take $x = 1$ and $\varepsilon = 2$.
Note that with this \textit{intensional} information on $x$ and $\varepsilon$, it is immediate that $0 < x < \varepsilon$.
But without this knowledge, we rely on the actual construction of $x$ and $\varepsilon$ as real numbers from Definition \ref{d_real}.
For instance, take $x_n = [0, 1]$ if $n < 1000$ and $[1,1]$ otherwise, $\varepsilon_n = [0, 2]$ for all $n < 1000$, $\varepsilon_{1000} = [1, 2]$ and $\varepsilon_n = [2,2]$ if $n > 1001$.
If we then decide $\lvert x \rvert > 0 \lor \lvert x \rvert < \varepsilon$ by using the 1000'th element of $x$ and $\varepsilon$, we only know $\lvert x \rvert > 0$.

Note that we cannot circumvent this issue by e.g. constructing the $\varepsilon$-row echelon form in another way.
Fixing $\varepsilon > 0$, the mapping of $A$ to $\rank_\varepsilon A$ is an operation from $\mathbb{R}^{n \times m}$ to $\mathbb{N}$.
We wish this to be a \textit{total} mapping, so defined for all matrices in $\mathbb{R}^{n \times m}$.
If it were extensional as well, then this mapping is by definition a function from $\mathbb{R}^{n \times m}$ to $\mathbb{N}$.
We can not constructively prove the existence of such functions, see for instance page 120 of \cite{VoCM}.

We should really view $\rank_\varepsilon A$ as the operation of $\varepsilon$-reduction applied to $A$ and subsequently counting the number of leading columns.
A notation such as $\rank[\varepsilon, A]$ would be more appropriate, the square brackets signifying that this is not a function.
Definition \ref{d_epsilon_rank} is thus a slight abuse of notation.
The crux is that the natural number $\rank_\varepsilon A$ can be shown to have certain useful properties.
Examples include the next Lemma and Proposition \ref{p_det_and_rank}.
These properties of $\rank_\varepsilon A$ hold even though its value changes depending on how we check $\lvert x \rvert > 0 \lor \lvert x \rvert < \varepsilon$ and how $A$ and $\varepsilon$ are constructed.

\begin{lemma}
Let $A$ be a ranked matrix and $\varepsilon > 0$, then $\rank_\varepsilon A \leq \rank A$.
\begin{proof}
An $\varepsilon$-row echelon form $A_\varepsilon$ of $A$ contains $\rank_\varepsilon A$ independent columns.
As $A_\varepsilon$ is obtained from $A$ by stable elementary operations, the matrix $A$ also contains $\rank_\varepsilon A$ independent columns.
Column swapping does not affect independence of columns and the elementary row operations do not affect rank by Proposition \ref{p_elem_row_operations_preserve_col_rank}.
\end{proof}
\end{lemma}

\section{Eigenvalues and eigenvectors} \label{sec_eigen}

Defining the \textit{determinant} $\det A$ of a square matrix $A$ presents no constructive difficulties.
Most familiar properties still hold.
We refer the reader to section 4.11 of \cite{CiM2}.
To prove some additional properties of the determinant, or more precisely of singular matrices with $\det A = 0$, we need the bounds provided by Proposition \ref{p_epsilon_reduction} on the reduction matrices.
In particular, we will work towards Theorem \ref{t_det_zero}.
This shows the equivalence between $\det A = 0$ and the existence of approximate kernel vectors up to any precision.
Stated otherwise, it provides a method to calculate approximate eigenvectors of $A$ with eigenvalue $\lambda$ from knowing $\det A - \lambda I = 0$. 
Propositions \ref{p_det_and_rank} and \ref{p_epsilon_kernel} are the main technical results needed to prove Theorem \ref{t_det_zero}.
In turn, these propositions require some elementary lemmata on norm approximations we derive below.

\begin{definition}
If $A \in \mathbb{R}^{n \times m}$, then the $L^2$-norm $\norm{A}$ of $A$ is defined as $\sup_{\{ \mathbf{x} \ | \ \norm{\mathbf{x}} = 1 \}} \norm{A\mathbf{x}}$, the norm induced by the $L^2$-vector norm.
\end{definition}
That the matrix norm $\norm{A}$ is actually computable and a norm is shown in Corollary 4.1.8 of \cite{ToCA}.

\begin{lemma}[Cauchy-Schwarz] \label{l_cauchy_schwarz}
Let $\mathbf{x}, \mathbf{y}$ be vectors in $\mathbb{R}^n$, then $\langle \mathbf{x}, \mathbf{y} \rangle^2 \leq \langle \mathbf{x}, \mathbf{x} \rangle\langle \mathbf{y}, \mathbf{y} \rangle$.
\begin{proof}
Suppose first we could decide $\mathbf{y} = \mathbf{0} \lor \mathbf{y} \neq \mathbf{0}$.
Then we would reason as in classical mathematics.
If $\mathbf{y} = \mathbf{0}$, then both $\langle  \mathbf{x}, \mathbf{y} \rangle$ and $\langle \mathbf{y}, \mathbf{y} \rangle$ are $0$ and we are done.
When $\mathbf{y} \neq \mathbf{0}$, then we look at the vector $\mathbf{z} = \mathbf{x} - \langle \mathbf{x}, \mathbf{y} \rangle/\langle \mathbf{y}, \mathbf{y} \rangle \mathbf{y}$.
This is the orthogonal complement of $\mathbf{x}$ projected on $\mathbf{y}$.
The Cauchy-Schwarz inequality then easily follows from $\norm{\mathbf{z}} \geq 0$.

Of course, we cannot decide $\mathbf{y} = \mathbf{0} \lor \mathbf{y} \neq \mathbf{0}$.
However, we must prove a negative statement, $\neg (\langle \mathbf{x}, \mathbf{y} \rangle^2 > \langle \mathbf{x}, \mathbf{x} \rangle\langle \mathbf{y}, \mathbf{y} \rangle)$.
So suppose $\langle \mathbf{x}, \mathbf{y} \rangle^2 > \langle \mathbf{x}, \mathbf{x} \rangle\langle \mathbf{y}, \mathbf{y} \rangle$, we will arrive at contradiction.
In that case $\neg (\mathbf{y} = \mathbf{0} \lor \mathbf{y} \neq \mathbf{0})$ follows from the reasoning above.
But we know $\neg (\mathbf{y} = \mathbf{0} \lor \mathbf{y} \neq \mathbf{0})$ implies $\neg (\mathbf{y} = \mathbf{0}) \land \neg (\mathbf{y} \neq \mathbf{0})$, so $\neg (\mathbf{y} = \mathbf{0}) \land \mathbf{y} = \mathbf{0}$.
This contradiction shows $\neg (\langle \mathbf{x}, \mathbf{y} \rangle^2 > \langle \mathbf{x}, \mathbf{x} \rangle\langle \mathbf{y}, \mathbf{y} \rangle)$.
\end{proof}
\end{lemma}

\begin{corollary} \label{c_cauchy_schwarz}
Let $\mathbf{x}$ be a vector in $\mathbb{R}^n$, then $\sum_{i = 1}^n \lvert \mathbf{x}_i \rvert \leq \sqrt{n} \norm{\mathbf{x}}$.
\begin{proof}
Apply Cauchy-Schwarz to $\mathbf{1}$ and the vector of the absolute values of the entries of $\mathbf{x}$.
\end{proof}
\end{corollary}

The proof of Lemma \ref{l_cauchy_schwarz} uses a technique which Berger and Svindland call the `*-rule' \cite{MUCaP}.
This `logical trick' is much older than their article however.
The first reference we could find in the literature is the above proof of the Cauchy-Schwarz inequality in Kumar's PhD thesis \cite{HSiI}.
In abstract form the reasoning is as follows.
We wish to derive $\neg P$ for some statement $P$.
We consider statements $A_1$, $A_2$ ... $A_n$ and first prove $(A_1 \lor A_2 \lor \ ... \lor A_n) \rightarrow \neg P$.
Subsequently we see that $\neg \neg(A_1 \lor A_2 \lor \ ... \lor A_n)$ leads to $\neg P$.
The crucial trick part of the trick is that $(A_1 \lor A_2 \lor \ ... \lor A_n) \rightarrow \neg P$ is usually trivial and follows the classical proof.
While $A_1 \lor A_2 \lor \ ... \lor A_n$ is generally undecidable, $\neg \neg(A_1 \lor A_2 \lor \ ... \lor A_n)$ does hold constructively and is sufficient to prove $\neg P$.
Often the statements $A_1$, $A_2$ ... $A_n$ are mutually exclusive statements that classically exhaust all options.
An example is $x<0$, $x=0$ and $x>0$ for any real number $x$.

\begin{lemma} \label{l_max_norm_vector}
Let $\mathbf{x}$ be a vector in $\mathbb{R}^n$, then $ \norm{\mathbf{x}}_{\max} \leq \norm{\mathbf{x}} \leq \sqrt{n} \norm{\mathbf{x}}_{\max}$.
\begin{proof}
Exactly as in classical mathematics, $\norm{x}^2 = \sum_{i = 1}^n \mathbf{x}_i^2 \leq \sum_{i = 1}^n \norm{\mathbf{x}}_{\max}^2 = n\norm{\mathbf{x}}_{\max}^2$.
\end{proof}
\end{lemma}

\begin{lemma} \label{l_max_norm}
Let $A$ be a matrix in $\mathbb{R}^{n \times n}$, then $\norm{A} \leq n \norm{A}_{\max}$.
\begin{proof}
Exactly as in classical mathematics.
Consider the $i$'th element of $A\mathbf{x}$, so $[A\mathbf{x}]_i = \sum_{j = 1}^n A_{ij}\mathbf{x}_j$.
Then we see $\lvert [A\mathbf{x}]_i \rvert \leq \norm{A}_{\max} \sum_{j = 1}^n \lvert \mathbf{x}_j \rvert$.
From Corollary \ref{c_cauchy_schwarz} we get $\norm{A\mathbf{x}}_{\max} \leq \sqrt{n} \norm{A}_{\max} \norm{\mathbf{x}}$ for all $\mathbf{x}$.
Applying the previous lemma, we get $\norm{A\mathbf{x}} \leq n \norm{A}_{\max} \norm{\mathbf{x}}$.
Combining this inequality with the definition of the matrix $L^2$-norm, we are done.
\end{proof}
\end{lemma}

\begin{proposition} \label{p_det_and_rank}
Let $A$ be an $n \times n$ matrix, then $\det A = 0$ if and only if $\rank_\varepsilon A$ is less than $n$ for all $\varepsilon > 0$.
\begin{proof}
Suppose that $\det A = 0$.
Any $\varepsilon$-row echelon form $A_\varepsilon$ of $A$ is obtained from $A$ using stable operations.
These do not change the determinant, so we have $\det A_\varepsilon = 0$.
If the number of leading columns of $A_\varepsilon$ were $n$, then $A_\varepsilon$ is a triangular matrix with diagonal entries apart from $0$.
Obviously that would mean $\det A_\varepsilon \neq 0$.
This contradiction implies the number of leading columns is less than $n$ and hence $\rank_\varepsilon A < n$.

Next assume the $\varepsilon$-rank of $A$ is less than $n$ for all $\varepsilon > 0$.
We will show that $\lvert \det A \rvert < \varepsilon'$ for any $\varepsilon' > 0$.
As a $\varepsilon$-form $A_\varepsilon$  of $A$ is obtained from $A$ through stable operations, we have $\lvert \det A \rvert = \lvert \det A_\varepsilon \rvert$ for any $\varepsilon$.
In particular this holds for those $A_\varepsilon$ constructed using Corollary \ref{c_epsilon_form_norm}.
The determinant of such $A_\varepsilon$ will be the product of the determinant of the leading columns and the residual matrix.
This holds because the leading columns are a triangular form.
In particular the determinant of these leading columns is the product of the diagonal entries.
Taking $\delta = 1$ in Corollary \ref{c_epsilon_form_norm}, the absolute values of these entries are bounded above by $n^{3n}2^n\norm{A}_{\max}$.
Moreover our assumption implies there are at most $n - 1$  diagonal entries in the leading columns of $A_\varepsilon$.
Defining $\rho(A,n) = n^{3n}2^n\norm{A}_{\max}$, the absolute value of the product of these entries is thus at most $\rho(A,n)^{n - 1}$.
Next we consider the residual matrix $E$ of $A_\varepsilon$.
We know this matrix consists of at least one column because $\rank_\varepsilon A < n$.
As all entries of the $(n - \rank_\varepsilon A) \times (n - \rank_\varepsilon A)$ matrix $E$ do not exceed $\varepsilon$ in absolute value, its determinant is bounded by $(n - \rank_\varepsilon A)! \lvert \varepsilon \rvert^{n - \rank_\varepsilon A}$.
For any $\varepsilon < 1/n!$ this is bounded above by $\varepsilon$.
Now pick $\varepsilon = \min \big(1/n!, \varepsilon' / \rho(A,n)^{n - 1} \big)$, then $\lvert \det A_\varepsilon \rvert < \varepsilon'$.
Thus $\lvert \det A \rvert = \lvert \det A_\varepsilon \rvert < \varepsilon'$ and as $\varepsilon' > 0$ is arbitrary, this shows $\det A = 0$.
\end{proof}
\end{proposition}

\begin{proposition} \label{p_epsilon_kernel}
Let $A$ be an $\mathbb{R}^{n \times n}$ matrix with $\det A = 0$.
Then for every $\varepsilon > 0$ there exist $l = n - \rank_\varepsilon A > 0$ vectors $\mathbf{x} \in \mathbb{R}^n$ with $\norm{\mathbf{x}} = 1$ such that $\norm{A\mathbf{x}} < \varepsilon$.
Moreover for each $\beta \in \mathbb{R}^l$ we have $\norm{\sum_{i = 1}^l \beta_i \mathbf{x}_i} \geq M_A \sum_{i = 1}^l \lvert \beta_i \rvert$ for some $M_A > 0$.
\begin{proof}
Let $\varepsilon' = \varepsilon / \big(\sqrt{n} (2n)^{2n}\big)$ and construct an $\varepsilon'$-row reduced form $B$ of $A$.
By Proposition \ref{p_det_and_rank} the matrix $B$ has fewer than $n$ leading columns and therefore looks as follows:

\begin{equation*}
\begin{vmatrix}
T & M \\
0 & E \\
\end{vmatrix}.
\end{equation*}

Defining $k = \rank_\varepsilon A$, then $T$ is a $k \times k$ triangular form, $M$ is an arbitrary $k \times (n - k)$ matrix and $E$ is the $(n - k) \times (n - k)$ residual matrix. 
From the previous proposition we know $k < n$.
Obviously $T$ has rank $k$ and is therefore invertible.
For each column $i \in \{k + 1 \ ... \ n \}$, define the vector $\mb{y}_i \in \mathbb{R}^n$ as the vector whose first $k$ values are $T^{-1} (-M_i)$ and last $n - k$ values are all $0$ except the $i$'th entry, which is $1$.
The vector $B \mathbf{y}_i$ will then consist of $k$ zeroes, followed by $n - k$ entries whose absolute values are all less than $\varepsilon'$.
That means $\norm{B \mathbf{y}_i}^2 \leq n\varepsilon'^2$ by Lemma \ref{l_max_norm_vector}.
Because the $j$'th entry of $\mathbf{y}_i$ is $1$, we know $\norm{y_i} \geq 1$.
So defining $\mb{z}_i = \mb{y}_i / \norm{\mb{y}_i}$ we get $\norm{B\mb{z}_i} < \norm{B \mathbf{y}_i} \leq \sqrt{n}\varepsilon' \leq \varepsilon$.
By Proposition \ref{p_epsilon_reduction} we can write $B = R_1 \ ... \ R_{2n} A C_1 \ ... \ C_n$, where the $R_i$ are stable row operations satisfying $\norm{R_i}_{\max} < 2$ for all $i$ and the $C_i$ are column swaps.
Now consider the vector $\mb{x}_i = C_1 \ ... \ C_n\mb{z}_i$.
This vector also has norm $1$ and we get $A\mb{x} =  R_{2n}^{-1} \ ... \ R_1^{-1} B C_n^{-1} \ ... C_1^{-1}\mb{x}_i = R_{2n}^{-1} \ ... \ R_1^{-1} B \mb{z}_i$.
Since $\norm{R_i}_{\max} < 2$, we know $\norm{R_i} < 2n$ by Lemma \ref{l_max_norm}.
We then get $\norm{A\mb{x}_i} \leq \norm{R_{2n}^{-1}} \ ... \ \norm{R_1^{-1}} \norm{B \mb{z}_i} \leq \norm{R_{2n}^{-1}} \ ... \ \norm{R_1^{-1}} \sqrt{n(\varepsilon')^2} = (2n)^{2n}\sqrt{n}\varepsilon' = \varepsilon$ and we found $n - k$ unit norm vectors whose product with $A$ has length less than $\varepsilon$.

For each $\varepsilon$-eigenvector $\mathbf{x}$, we can find an index $j$ such that $[\mathbf{x}]_j \neq 0$, but $[\mathbf{v}]_j = 0$ for all other $\varepsilon$-eigenvectors $\mathbf{v}$.
Thus, we have shown the second claim of this proposition if we can find a lower bound on the absolute values of these entries that only depends on $A$.
The result then immediately follows from Corollary \ref{c_cauchy_schwarz}.
Each vector $\mathbf{x}_i$ is the product of $C_1 \ ... \ C_n$ and a vector $\mathbf{z}_i$. 
Those vectors $\mathbf{z}_i$ are normalized versions of vectors $\mathbf{y}_i$, which have the property that for each $i$, we can find a $j$ such that $[\mathbf{y}_i]_j = 1$, but $[\mathbf{y}_k]_j = 0$ for all $k \neq j$.
So we only have to find an upper bound for the norm of $\mathbf{y}_i$.
By Lemma \ref{l_max_norm_vector} and \ref{l_max_norm_product} we have $\norm{\mathbf{y}_i} \leq \sqrt{n}\max(1, \norm{T^{-1}M_i}_{\max}) \leq \sqrt{n}\max(1, n\norm{T^{-1}}_{\max} \norm{M}_{\max})$.
We can bound $\norm{M}_{\max}$ by $\norm{A_\varepsilon}_{\max}$ and we have a bound for $\norm{A_\varepsilon}_{\max}$ in Corollary \ref{c_epsilon_form_norm}.
That leaves $\norm{T^{-1}}_{\max}$.
As $T$ is matrix of full rank, we can find $T^{-1}$ with standard Gaussian elimination.
No columns swaps are needed and we can use the pivoting strategy from Proposition \ref{p_epsilon_reduction}.
We multiply $T$ with row swap and row addition matrices until it is reduced to $I$.
The product of these row operation matrices is $T^{-1}$.
We then find a similar norm estimate as in Corollary \ref{c_epsilon_form_norm}, $\norm{T^{-1}}_{\max} \leq n^{2n}(1 + \delta)^n \norm{T}_{\max}$.
Since obviously $\norm{T}_{\max} \leq \norm{A_\varepsilon}_{\max}$, we are done.
\end{proof}
\end{proposition}

Our main contribution Theorem \ref{t_det_zero} is now a simple consequence Proposition \ref{p_epsilon_kernel} and a standard continuity argument.

\begin{theorem} \label{t_det_zero}
Let $A$ be an $\mathbb{R}^{n \times n}$ matrix.
Then $\det A = 0$ if and only if for every $\varepsilon > 0$ there exists an $\mathbf{x} \in \mathbb{R}^n$ with $\norm{\mathbf{x}} = 1$ such that $\norm{A\mathbf{x}} < \varepsilon$. 
\begin{proof}
The `only if' part is the content of the previous proposition.
For the other implication, assume that for every $\varepsilon > 0$ there exists an $\mathbf{x} \in \mathbb{R}^n$ with $\norm{\mathbf{x}} = 1$ such that $\norm{A\mathbf{x}} < \varepsilon$.
Furthermore, suppose $\det A \neq 0$.
Then $A$ is invertible by Theorem 4.14 of \cite{CiM2}. 
This essentially follows Cramer's rule, whose proof is constructive once we assume $\det A \neq 0$.
Let $\varepsilon_k \rightarrow 0$ and construct a sequence $\mb{x}_k \in \mathbb{R}^n$ such that $\norm{\mb{x}_k} = 1$ and $\norm{A \mb{x}_k} < \varepsilon_k$ for all $k$.
Define $\mb{y}_k = A\mb{x}_k$, then clearly $\mb{y}_k \rightarrow \mb{0}$ as $k \rightarrow \infty$.
Because the mapping $\mb{x} \mapsto A^{-1}\mb{x}$ is continuous, this implies $A^{-1}\mb{y}_k$ converges to $A^{-1}\mb{0} = \mb{0}$.
But $A^{-1}\mb{y}_k = \mb{x}_k$ and $\norm{\mb{x}_k} = 1$ for all $k$.
This contradiction forces us to conclude $\neg \det A \neq 0$ and hence $\det A = 0$.
\end{proof}
\end{theorem}
If $A$ is an $n \times n$ matrix and $\mathbf{x} \neq \mathbf{0}$ an $n \times 1$ vector such that $A\mathbf{x} = \lambda \mathbf{x}$ for some $\lambda \in \mathbb{R}$, then we say $\mathbf{x}$ is an \textit{eigenvector} of $A$ and $\lambda$ is an \textit{eigenvalue}.
We will see that these concepts are not well-behaved in a constructive setting.
Although roots of the characteristic polynomial $\lambda \mapsto \det A - \lambda I$ can be calculated, finding the corresponding eigenvectors is undecidable.
We formally prove this undecidability in Proposition \ref{p_eigenvectors_uncomputable}.
It turns out that even for ranked, square matrices, eigenvectors are not computable.
\clearpage
For this reason, Osinenko, Devadze, and Streif consider $\varepsilon$-eigenvalues and $\varepsilon$-eigenvectors in \cite{CAoEPiCuNU}.
They first approximate $A$ with a rational matrix $A^*$. 
Eigenvalues $\lambda^*$ of $A^*$ are algebraic numbers, whose ordering and equality are decidable.
Subsequently we can find the eigenvectors of $A^*$ essentially as in classical mathematics.
Using standard Gaussian elimination for instance, we only encounter algebraic numbers as matrix entries.
Exact eigenvectors of $A^*$ then yield $\varepsilon$-eigenvectors of $A$. 

Their method is elegant but introduces significant computational overhead, as one must track the corresponding polynomials of all entries under every addition and multiplication.
Our approach treats matrix entries as abstract real numbers (black boxes providing rational approximations), avoiding such overhead. 
The method is more `linear algebraic' in nature.
In this section we present this alternative construction of $\varepsilon$-eigenvectors based on Theorem \ref{t_det_zero}.

Note that $\varepsilon$-eigenvectors were first introduced in an infinite dimensional setting by Bridges and Ishihara in \cite{SoSOiCA}.
However, here only selfadjoint operators are considered.
But both our work and that of Osinenko, Devadze, and Streif (\cite{CAoEPiCuNU}) considers general, real matrices, not just symmetric matrices.

\begin{definition}
Let $A$ be an $n \times n$ matrix and $\varepsilon > 0$.
A tuple $(\lambda, \mathbf{x}) \in \mathbb{R} \times \mathbb{R}^n$ is an \textbf{$\varepsilon$-eigenpair} if:
\begin{enumerate}
\item $\lambda$ is an \textbf{eigenvalue}, so $\det A - \lambda I = 0$, 
\item $\mathbf{x}$ is a normalized \textbf{$\varepsilon$-eigenvector} with eigenvalue $\lambda$, so $\norm{A\mathbf{x} - \lambda\mathbf{x}} < \varepsilon$ and $\norm{\mathbf{x}} = 1$. 
\end{enumerate}
\end{definition}

Calculating eigenvalues turns out be easy constructively.
This is a direct consequence of the Fundamental Theorem of Algebra (Theorem \ref{t_fta}).

\begin{definition} \label{d_poly}
A \textbf{polynomial of degree} $n$ is a function $p:\mathbb{C} \rightarrow \mathbb{C}$ that maps $x \in \mathbb{C}$ to $\sum_{i = 0}^n a_ix^i$ with $(a_i)_{0 \leq i \leq n} \in \mathbb{R}$ and $a_n \neq 0$.
When $n$ is clear from the context, we will also just call $p$ a \textbf{polynomial}.
\end{definition}

\begin{theorem}[Fundamental Theorem of Algebra]\label{t_fta}
Let $p$ be a polynomial of degree $n$, then there exist complex numbers $z_i$ with $1 \leq i \leq n$ such that $p(z)=a_n\prod_{i =1}^n(z - z_i)$ for all $z \in \mathbb{C}$.
\begin{proof}
See Theorem 5.10 of \cite{CA}.
\end{proof}
\end{theorem}

Thus it is immediately clear that for any matrix $A \in \mathbb{R}^{n \times n}$ we can compute $n$ eigenvalues $\lambda_1,...,\lambda_n$.
The tricky part, however, is the calculation of the corresponding eigenvectors.
Exact eigenvectors are fundamentally uncomputable, as Proposition \ref{p_eigenvectors_uncomputable} shows.
We use a construction mentioned on Math Overflow (https://mathoverflow.net/questions/369930/why-is-uncomputability-of-the-spectral-decomposition-not-a-problem?).
Theorem 1.7 of \cite{DoCMaaRoI} in fact strengthens Proposition \ref{p_eigenvectors_uncomputable} to an equivalence with a real number principle between LLPO and LPO.

\begin{proposition}\label{p_eigenvectors_uncomputable}
Let $T$ be the statement
`Let $A$ be a ranked, square matrix with at least two columns.
Then $A$ has an eigenvector'.
Then $T$ implies LLPO.
\begin{proof}
We first show that $T$ implies LLPO.
Let $\rho \in \mathbb{R}$ and define $\rho_1=\min(0,\rho)$ and $\rho_2=\max(0,\rho)$.
Consider the following matrix $A$:
\begin{equation*}
\begin{vmatrix}
1 + \rho_1 & \rho_2 \\
\rho_2 & 1 - \rho_1
\end{vmatrix}.
\end{equation*}
Its characteristic polynomial is $\lambda^2 - 2\lambda + 1 - \rho_1^2 - \rho_2^2$.
This has complex roots $1 \pm \sqrt{\rho_1^2 +\rho_2^2}$.
Since $\rho_1^2 = \min(0, \rho^2) = \min(0, \rho^2)$ and $\rho_2^2 = \max(0, \rho)^2 = \max(0, \rho^2)$,
we get $\rho_1^2 +\rho_2^2 = \rho^2$.
Hence the roots of the characteristic polynomial are $1 \pm \lvert \rho \rvert$.
If $(a, \sqrt{1 - a^2})^T$ is a normalized eigenvector of $A$, then we have the following system of equations:
\begin{align*}
       a\rho_1 + \sqrt{1 - a^2} \rho_2 &= a \lvert \rho \rvert \\
       a\rho_2 - \sqrt{1 - a^2} \rho_1 &= \sqrt{1 - a^2} \lvert \rho \rvert.
\end{align*}
Decide $a > 0 \lor a < 1/2$ and first consider $a > 0$.
Suppose $\rho < 0$, then $\lvert \rho \rvert = -\rho$, $\rho_1 = \rho$ and $\rho_2 = 0$.
Our system of equations reduces to:
\begin{align*}
       a\rho &= -a \rho \\
      \sqrt{1 - a^2} \rho &= -\sqrt{1 - a^2} \rho.
\end{align*}
Because $a > 0$, the first equation is contradictory and we must conclude $\rho \geq 0$.
Next consider $a < 1/2$ and suppose $\rho > 0$.
Then we know $\lvert \rho \rvert = \rho$, $\rho_1 = 0$ and $\rho_2 = \rho$.
The system of equations for our eigenvector $(a, \sqrt{1 - a^2})^T$ becomes:
\begin{align*}
       \sqrt{1 - a^2} \rho &= a \rho \\
       a\rho &= \sqrt{1 - a^2} \rho. 
\end{align*}
Solving for $a$ yields $a = (1/2)\sqrt{2}$, which contradicts $a < 1/2$.
Therefore, we must now conclude $\rho \leq 0$.
Thus we are able to decide $\rho \leq 0 \lor \rho \geq 0$ for an arbitrary $\rho \in \mathbb{R}$, which is precisely LLPO.
\end{proof}
\end{proposition}

Like Osinenko, Devadze, and Streif in \cite{CAoEPiCuNU}, we thus turn our attention towards $\varepsilon$-eigenvectors.
These are trivially computable from Proposition \ref{p_epsilon_kernel}.

\begin{proposition}
Let $A$ be a square $n \times n$ matrix, $\lambda$ an eigenvalue of $A$ and $\varepsilon > 0$.
Then there exist $n - \rank_\varepsilon (A - \lambda I)$ independent $\varepsilon$-eigenvectors of $A$ with eigenvalue $\lambda$.
\begin{proof}
This follows immediately  from $\det A - \lambda I = 0$ and Proposition \ref{p_epsilon_kernel}.
\end{proof}
\end{proposition}

Proving that we can also obtain independent $\varepsilon$-eigenvectors from Proposition \ref{p_epsilon_kernel} for $\varepsilon$ small enough is more difficult.
The following lemmata and propositions work towards this result in the form of Theorem \ref{t_eigen}.

\begin{lemma} \label{l_beta}
Let $X = \{ \mathbf{x}_i \ | \ 1 \leq i \leq m\}$ be a set of vectors in $\mathbb{R}^n$.
Suppose every subset of $m - 1$ vectors of $X$ is independent.
If $\beta\in \mathbb{R}^m$ satisfies $\beta \neq \mathbf{0}$, then either $\sum_i \beta_i \mathbf{x}_i \neq \mathbf{0}$, or $\beta_i \neq 0$ for each $i$.
\begin{proof}
Since $\beta \neq \mathbf{0}$, there is a $k$ with $\beta_k \neq 0$.
Without loss of generality we take $k = 2$.
We first show $\sum_i \beta_i \mathbf{x}_i \neq \mathbf{0}$ or $\beta_l \neq 0$ for every $l \neq 2$.
Again without loss of generality, we only look at the case $l = 1$.
By our assumption on $X$, the linear combination $\sum_{i=2}^m \beta_i \mathbf{x}_i$ is apart from $\mathbf{0}$.
This holds because the term $\beta_2$ is part of the coefficients over which we sum.
Thus there is a $j$ such that $(\sum_{i=2}^m \beta_i \mathbf{x}_i)_j \neq 0$.
Decide $\lvert (\beta_1 \mathbf{x}_1)_j \rvert > 0 \lor \lvert (\beta_1 \mathbf{x}_1)_j \rvert < 1/2 \lvert (\sum_{i=2}^m \beta_i \mathbf{x}_i)_j \rvert$.
In the first case we immediately have $\beta_1 \neq 0$, while the second case means $(\sum_{i=1}^m \beta_i \mathbf{x}_i)_j \neq 0$ and hence $\sum_i \beta_i \mathbf{x}_i \neq \mathbf{0}$.
This shows $\sum_i \beta_i \mathbf{x}_i \neq \mathbf{0}$ or $\beta_l \neq 0$ for every $l \neq 2$ and we already know $\beta_2 \neq 0$.
Now for each $k$, decide $\sum_i \beta_i \mathbf{x}_i \neq \mathbf{0}$ or $\beta_k \neq 0$.
We either find $\beta_k \neq 0$ that for all $k$, or there is a $k$ which gives us $\sum_i \beta_i \mathbf{x}_i \neq \mathbf{0}$.
\end{proof}
\end{lemma}

\begin{lemma} \label{l_reso}
Let $A$ be a square $n \times n$ matrix with eigenvalues $\lambda, \mu$ that are apart.
Then $\norm{A - \lambda I} > 0$.
\begin{proof}
Clearly, the determinant is a continuous function $\mathbb{R}^{n \times n} \rightarrow\mathbb{R}$.
Consider $\det$ in the point $(\lambda - \mu) I$ and find a $\delta > 0$ such that $\lvert \det \big((\lambda - \mu) I + E \big) - \det (\lambda - \mu) I \rvert < \lvert \lambda - \mu \rvert^n/2$ for any $E$ with $\norm{E} < \delta$.
Decide $\norm{A - \lambda I} > 0 \lor \norm{A - \lambda I } < \delta$, in the first case we are done.
We will derive a contradiction from the second option $\norm{A - \lambda I } < \delta$.
We can then write $\det (A - \mu I) = \det \big( (A - \lambda I) + (\lambda - \mu)I\big)$ and since the perturbation $(A - \lambda I)$ of $(\lambda - \mu)I$ satisfies $\norm{A - \lambda I } < \delta$, we get $\lvert \det (A - \mu I) - \det (\lambda - \mu) I  \rvert < \lvert \lambda - \mu \rvert^n/2$.
This reduces to $\lvert 0 - \lvert \lambda - \mu\rvert^n  \rvert < \lvert \lambda - \mu \rvert^n/2$ because $\mu$ is an eigenvalue and this is contradictory.
\end{proof}
\end{lemma}

\begin{proposition} \label{p_apart_eigenvectors}
Let $A$ be a square $n \times n$ matrix and let $\lambda_1,...,\lambda_k$ be eigenvalues of $A$ that are pairwise apart.
Then there is a $\varepsilon' > 0$ and real number $f(A, \lambda_1,...,\lambda_k) > 0$ such that for all $0 < \varepsilon < \varepsilon'$, any vector $\beta \in \mathbb{R}^k$ with $\beta_i \neq 0$ for $1 \leq i \leq k$ and any set $\mathbf{x}_i$ with $1 \leq i \leq l$ for some $l \leq n$ of $\varepsilon$-eigenvectors corresponding to $\lambda_1,...,\lambda_k$ the norm of $\sum_{i=1}^l \beta_i \mathbf{x}_i$ is bounded below by $f(A, \lambda_1,...,\lambda_k) \sum_{i = 1}^l \lvert \beta_i \rvert$.
\begin{proof}

Initially, we just consider the case where each eigenvalue has precisely one corresponding $\varepsilon$-eigenvector.
In particular we then have $l = k$.
We prove this with induction on $k$.
The base case is trivial, if $k = 1$ then for any $\varepsilon$, any $\varepsilon$-eigenvector $\mathbf{x}$ and $\beta_1 \in \mathbb{R}$ we have $\norm{\beta_1\mathbf{x}_1} = \lvert \beta_1 \rvert \norm{\mathbf{x}_1} = \lvert \beta_1 \rvert$.
So we define $\varepsilon' = 1$ and $f(A, \lambda_1) = 1$.
\clearpage
\noindent Suppose the theorem is proven for $k < n$, we will show it also holds for $k + 1$.
Define $\Lambda = \{\lambda_1,...,\lambda_{k + 1}\}$, then by the induction hypothesis, for each $1 \leq i \leq k + 1$ there are $\varepsilon'_i > 0$ and real numbers $f(A, \Lambda / \{ \lambda_i \})$ satisfying the requirements of the proposition.
Write $r = \max_{1 \leq i \leq k + 1} \norm{A - \lambda_iI}$, then $r > 0$ by Lemma \ref{l_reso}.
Define the following expressions:
\begin{align*}
    \varepsilon^* &= \min_{1 \leq i \leq k + 1} \varepsilon'_i \\
    f^*(A, \Lambda) &= \min_{1 \leq i \leq k} f(A, \Lambda / \{ \lambda_i \}) > 0 \\
    \hat{\beta} &= (2k + 1) / (2k - 1) \\
    \upgamma &= \min_{i \neq j} \lvert \lambda_i - \lambda_j \rvert > 0 \\
    f(A, \Lambda) &= \frac{\upgamma}{4r\hat{\beta}}f^*(A, \Lambda) \\
    \varepsilon' &= \min \big(\varepsilon^*, \ \frac{\upgamma}{2\hat{\beta}}f^*(A, \Lambda)\big).
\end{align*}
Let $\varepsilon > 0$ satisfy $\varepsilon < \varepsilon'$ and let $\mathbf{x}_i$ be $\varepsilon$-eigenvectors corresponding to $\lambda_i$ respectively for $1 \leq i \leq k + 1$.
Let $\beta$ be a vector in $\mathbb{R}^{k + 1}$ with all entries apart from $0$.
Define $\mathbf{z} = \sum_{i=1}^{k + 1} \beta_i \mathbf{x}_i$, we have to prove $\norm{\mathbf{z}} > f(A, \Lambda) \sum_{i = 1}^{k + 1} \lvert \beta_i \rvert$.
Moreover define $\mathbf{r}_i$ for $1 \leq i \leq k + 1$ by $\mathbf{r}_i = A\mathbf{x}_i - \lambda_i\mathbf{x}_i$.
We then know $\norm{\mathbf{r}_i} < \varepsilon$.
Find a $j$ such that $\lvert \beta_j \rvert < \sum_{i = 1} ^ {k + 1} \lvert \beta_i \rvert / (k + 1/2)$ by deciding $\lvert \beta_i \rvert > \sum_{i = 1} ^ {k + 1} \lvert \beta_i \rvert / (k + 1)$ or $\lvert \beta_i \rvert < \sum_{i = 1} ^ {k + 1} \lvert \beta_i \rvert / (k + 1/2)$ for each $i$ and noting that $\lvert \beta_i \rvert > \sum_{i = 1} ^ {k + 1} \lvert \beta_i \rvert / (k + 1)$ for all $1 \leq i \leq k + 1$ is contradictory.
Thus we get:
\begin{align}
    \sum_{i = 1} ^ {k + 1} \lvert \beta_i \rvert = \Big(\sum_{i = 1, i \neq j} ^ {k + 1} \lvert \beta_i \rvert\Big) + \lvert \beta_j\rvert &< \Big(\sum_{i = 1, i \neq j} ^ {k + 1} \lvert \beta_i \rvert\Big) + \sum_{i = 1} ^ {k + 1} \lvert \beta_i \rvert / (k + 1/2), \notag \\
    \big(1 - 1/(k+1/2)\big)\sum_{i = 1} ^ {k + 1} \lvert \beta_i \rvert &< \sum_{i = 1, i \neq j} ^ {k + 1} \lvert \beta_i \rvert, \notag \\
    \sum_{i = 1} ^ {k + 1} \lvert \beta_i \rvert &< \hat{\beta} \sum_{i = 1, i \neq j} ^ {k + 1} \lvert \beta_i \rvert. \label{eq_beta_hat}
\end{align}
Without loss of generality and for notational convenience assume $j = 1$.
We then have the following:
\begin{align*}
    A\beta_1\mathbf{x}_1 &= A\big(\mathbf{z} - \sum_{i = 2}^{k + 1} \beta_i\mathbf{x}_i \big) = A\mathbf{z} - \sum_{i = 2}^{k + 1} \beta_iA\mathbf{x}_i = A\mathbf{z} - \sum_{i = 2}^{k + 1} \beta_i(\lambda_i\mathbf{x}_i + \mathbf{r}_i) = A\mathbf{z} - \sum_{i = 2}^{k + 1} \beta_i\lambda_i\mathbf{x}_i - \sum_{i = 2}^{k + 1} \beta_i\mathbf{r}_i.
\end{align*}
At the same time, we can write:
\begin{align*}
    A\beta_1\mathbf{x}_1 &= \beta_1 (\lambda_1 \mathbf{x}_1 + \mathbf{r}_1) = \beta_1 \lambda_1 \mathbf{x}_1 + \beta_1\mathbf{r}_1 = \lambda_1 \big(\mathbf{z} - \sum_{i = 2}^{k + 1} \beta_i\mathbf{x}_i \big) + \beta_1\mathbf{r}_1 = \lambda_1 \mathbf{z} - \sum_{i = 2}^{k + 1} \beta_i \lambda_1\mathbf{x}_i + \beta_1\mathbf{r}_1.
\end{align*}
Equating the two expressions and rearranging, we obtain:
\begin{equation} \label{eq_z}
    (A - \lambda_1I)\mathbf{z} - \sum_{i = 1}^{k + 1}\beta_i \mathbf{r}_i = \sum_{i = 2}^{k + 1} \beta_i (\lambda_i - \lambda_1)\mathbf{x}_i.
\end{equation}
Decide $\norm{z} > f(A, \Lambda) \sum_{i = 1} ^ {k + 1} \lvert \beta_i \rvert$ or $\norm{z} < 2f(A, \Lambda)\sum_{i = 1} ^ {k + 1} \lvert \beta_i \rvert $.
In the first case we are done, so we may assume $\norm{z} < 2 f(A, \Lambda)\sum_{i = 1} ^ {k + 1} \lvert \beta_i \rvert$.
We will show this is contradictory, so that in fact we must have  $\norm{z} > f(A, \Lambda) \sum_{i = 1} ^ {k + 1} \lvert \beta_i \rvert$.
First consider the left hand side of (\ref{eq_z}) and start with the term $(A - \lambda_1I)\mathbf{z}$:
\begin{align}
    \norm{(A - \lambda_1I)\mathbf{z}} &\leq \norm{A - \lambda_1I} \norm{z} \leq r \norm{z} < r \cdot 2 f(A, \Lambda)\sum_{i = 1} ^ {k + 1} \lvert \beta_i \rvert = \frac{\upgamma}{2\hat{\beta}} f^*(A, \Lambda) \sum_{i = 1} ^ {k + 1} \lvert \beta_i \rvert. \label{eq_1}
\end{align} 
\clearpage 
\noindent Subsequently we investigate $\sum_{i = 1}^{k + 1}\beta_i \mathbf{r}_i$:
\begin{align}
    \norm{\sum_{i = 1}^{k + 1}\beta_i \mathbf{r}_i} &\leq \sum_{i = 1}^{k + 1} \lvert \beta_i \rvert \norm{\mathbf{r}_i} < \varepsilon' \sum_{i = 1}^{k + 1} \lvert \beta_i \rvert \leq \frac{\upgamma}{2\hat{\beta}} f^*(A, \Lambda) \sum_{i = 1}^{k + 1} \lvert \beta_i \rvert. \label{eq_2}
\end{align}
Combining (\ref{eq_1}) and ($\ref{eq_2}$) we obtain:
\begin{equation*} 
     \norm{(A - \lambda_1I)\mathbf{z} - \sum_{i = 1}^{k + 1}\beta_i \mathbf{r}_i} < \frac{\upgamma}{\hat{\beta}} f^*(A, \Lambda)\sum_{i = 1} ^ {k + 1} \lvert \beta_i \rvert. 
\end{equation*}
We can rewrite this as follows using (\ref{eq_beta_hat}):
\begin{align}
    \norm{(A - \lambda_1I)\mathbf{z} - \sum_{i = 1}^{k + 1}\beta_i \mathbf{r}_i} 
    &< \frac{\upgamma}{\hat{\beta}} f^*(A, \Lambda) \cdot\bigg( \sum_{i = 1} ^ {k + 1} \lvert \beta_i \rvert / \sum_{i = 2} ^ {k + 1} \lvert \beta_i \rvert \bigg) \cdot \sum_{i = 2} ^ {k + 1} \lvert \beta_i \rvert  \notag \\
    &\leq \frac{\upgamma}{\hat{\beta}} f^*(A, \Lambda) \cdot \hat{\beta} \cdot \sum_{i = 2} ^ {k + 1} \lvert \beta_i \rvert  \notag \\
    &= \upgamma f^*(A, \Lambda) \sum_{i = 2} ^ {k + 1} \lvert \beta_i \rvert. \label{eq_3}
\end{align}
Note that we are allowed to divide by $\sum_{i = 2} ^ {k + 1} \lvert \beta_i \rvert$, as this sum is apart from $0$ by our assumption that $\lvert \beta_i \rvert > 0$ for all $i$ and the induction hypothesis $k \geq 2$.
Next we look at the right-hand side of equation (\ref{eq_z}), for which we have a lower bound by our induction hypothesis:
\begin{align}
    \norm{\sum_{i = 2}^{k + 1} \beta_i (\lambda_i - \lambda_1)\mathbf{x}_i} 
    &> \Big( \sum_{i = 2}^{k + 1} \lvert \beta_i (\lambda_i - \lambda_1) \rvert \Big) f(A, \Lambda / \{ \lambda_1 \}) = f(A, \Lambda / \{ \lambda_1 \})\sum_{i = 2}^{k + 1} \lvert \beta_i \rvert \lvert \lambda_i - \lambda_1 \rvert \notag \\
    &\geq f(A, \Lambda / \{ \lambda_1 \}) \sum_{i = 2}^{k + 1} \lvert \beta_i \rvert \upgamma \geq \upgamma f^*(A, \Lambda)) \sum_{i = 2}^{k + 1} \lvert \beta_i \rvert.  \label{eq_4} 
\end{align}
Clearly (\ref{eq_3}) and (\ref{eq_4}) together contradict equation (\ref{eq_z}).

Now we sketch how to prove the case when there are $\varepsilon$-eigenspaces containing more than one $\varepsilon$-eigenvector.
Note first how the induction will then still range over $k$, but now $k$ does not necessarily signify the number of $\varepsilon$-eigenvectors in our linear combination.
It just stands for the number of apart eigenvalues under consideration.

The base case $k = 1$ then looks at all linear combinations of the form $\sum_{i = 1}^l \beta_i \mathbf{x}_i$.
Here, the vectors $\mathbf{x}_i$ are $\varepsilon$-eigenvectors of a yet to be determined $\varepsilon > 0$ corresponding to a single eigenvalue $\lambda$.
In that case we simply take $f(A, \{ \lambda \}) = M_A$, where $M_A$ is the number from Proposition \ref{p_epsilon_kernel}.

For the induction step, consider some $\mathbf{y}_i$ for $1 \leq i \leq m$ that are $\varepsilon$-eigenvectors for a single eigenvalue $\lambda$.
If $\alpha \in \mathbb{R}^m$ and any $0 < \delta < 1$, then we can rewrite the linear combination $\sum_{i = 1}^l \alpha_i \mathbf{y}_i$ as $\alpha_j\sum_{i = 1}^l \alpha_j^{-1} \alpha_i \mathbf{y}_i$ for some $j$ with $\lvert \alpha_j \rvert > \max_i \lvert \alpha_i \rvert / (1 + \delta)$.
Again this uses the fact that $\lvert \alpha_i \rvert > 0$ for all $i$, which in particular means $\lvert \alpha_i \rvert > 0$ for some $i$.
Defining $\mathbf{z} = \sum_{i = 1}^l \alpha_j^{-1} \alpha_i \mathbf{y}_i$, then we have $\norm{ \mathbf{z}} \leq \sum_{i = 1}^l \lvert \alpha_j^{-1} \alpha_i  \rvert \norm{\mathbf{y}_i} = \sum_{i = 1}^l \lvert \alpha_j^{-1} \alpha_i  \rvert \leq \sum_{i = 1}^l (1 + \delta) \leq n(1 + \delta)$.
Moreover similar estimates show that if the vectors $\mathbf{y}_i$ are $\varepsilon$-eigenvectors for the eigenvalue $\lambda$, then $\mathbf{z}$ is $n(1 + \delta) \varepsilon$-eigenvector. 
Thus the whole linear combination $\sum_{i = 1}^l \alpha_i \mathbf{y}_i$ of $\varepsilon$-eigenvectors can be seen as $\alpha_j \mathbf{z}$, where $\alpha_j \neq 0$ and $\mathbf{z}$ is a $n(1 + \delta) \varepsilon$-eigenvector. 
The proof for the geometric multiplicities all equal to 1 case then still works if we group together all $\varepsilon$-eigenvectors of the same eigenvalue and adjust the definition of $f(A, \Lambda)$ to include an extra factor $1 / \big( n(1 + \delta)\big)$.
The coefficients $\beta$ will then be the above defined $\alpha_j$, one for each eigenvalue.
\end{proof}
\end{proposition}

The proof sketch of Proposition \ref{p_apart_eigenvectors} in the case of geometric multiplicities greater than $1$ is a bit cumbersome. 
A more direct proof, involving an equality as (\ref{eq_z}), will likely also work.
This will in some sense be simpler than the construction we used.
Perhaps it will even yield tighter bounds for $\sum_{i=1}^l \beta_i \mathbf{x}_i$.
However, this will come at the cost of a heavy notational burden. 
We would  have to keep track of both the index of the corresponding eigenvalue and the index within the respective $\varepsilon$-eigenspace of every $\varepsilon$-eigenvector.

\begin{proposition}\label{p_eigen}
Let $A$ be a square $n \times n$ matrix and let $\lambda_1,...,\lambda_k$ be eigenvalues of $A$ that are pairwise apart.
Then there is an $\varepsilon' > 0$ such that for all $0 < \varepsilon \leq \varepsilon'$, any set of $\varepsilon$-eigenvectors corresponding to $\lambda_1, ...,\lambda_k$ are independent.
\begin{proof}
We use induction on $k$.
If $k = 1$, then we can reason exactly as in the previous proof.
Assume the result is true for $k < n$, we prove it for $k + 1$.
We have to show $\sum_{i = 1}^{k + 1} \beta_i \mathbf{x}_i \neq \mathbf{0}$ for all $\beta \in \mathbb{R}^k$ with $\beta \neq \mathbf{0}$.
Using Lemma \ref{l_beta}, decide $\sum_{i = 1}^{k + 1} \beta_i \mathbf{x}_i \neq \mathbf{0}$ or $\beta_i \neq 0$ for all $i$.
In the first case we are done.
That $\sum_{i = 1}^{k + 1} \beta_i \mathbf{x}_i \neq \mathbf{0}$ also holds in the second case is precisely the content of Proposition \ref{p_apart_eigenvectors}.
\end{proof}
\end{proposition}

We conclude with our main result Theorem \ref{t_eigen}.
At a quick glance, the difference between Proposition \ref{p_eigen} and this theorem is not directly clear.
The crucial difference is that the $\varepsilon'$ in Theorem \ref{t_eigen} does not depend on the eigenvalues.
Thus, given a matrix $A$, Theorem \ref{t_eigen} gives a set of eigenvalues $\Lambda$ and a set of vectors $X$ such these both are `approximations up to $\varepsilon$', stated informally.
The set $X$ consists of $\varepsilon$-eigenvectors and for every eigenvalue $\mu$ of $A$ there is a $\lambda \in \Lambda$ such that $\lvert \mu - \lambda \rvert < \varepsilon$.

\begin{theorem} \label{t_eigen}
Let $A$ be a square $n \times n$ matrix and $\varepsilon' > 0$.
Denote with $\lambda'_1,...,\lambda'_n$ the eigenvalues of $A$. 
Then there is a $0 < \varepsilon \leq \varepsilon'$ such that there are:
\begin{enumerate}
    \item real numbers $\lambda_1,...,\lambda_k$ with $k \leq n$, all eigenvalues of $A$,
    \item for each eigenvalue $\lambda'_i$ of $A$ there is a $j \leq k$ such that $\lvert \lambda'_i - \lambda_j \rvert < \varepsilon$,
    \item for each eigenvalue $\lambda_1,...,\lambda_k$ there exist $\varepsilon$-eigenvectors,
    \item the collection of these $\varepsilon$-eigenvectors is independent.
\end{enumerate}
\begin{proof}
Start with $\varepsilon_0 = \frac{1}{2}\varepsilon'$ and for each pair $\lambda_i', \lambda_j'$ with $i \neq j$, decide $\lvert \lambda'_i - \lambda_j' \rvert > 0$ or $\lvert \lambda'_i - \lambda_j' \rvert < \varepsilon_0$.
This way, we can separate the numbers $\lambda'_1,...,\lambda'_n$ into a set $\Lambda_0$ of pairwise apart numbers, and a set $X_0$ such that for every $\mu \in X$ there is a $\lambda \in \Lambda_0$ with $\lvert \mu - \lambda \rvert < \varepsilon_0$.
For notational convenience, relabel the elements of $\Lambda_0$ such that $\Lambda = \{\lambda'_1,...,\lambda'_k \}$ with $k \leq n$.
By Proposition \ref{p_eigen}, we can find an $\varepsilon[\Lambda_0] > 0$ such that the $\delta$-eigenvectors corresponding to the eigenvalues in $\Lambda_0$ are independent for all $0 < \delta < \varepsilon[\Lambda_0]$.
Next, define $\varepsilon_1 = \min(\varepsilon_0, \varepsilon[\Lambda_0])$ and for each $\mu \in X_0$ and each $\mu 
\in X$ and $1 \leq i \leq k$, decide $\lvert \mu - \lambda'_i \rvert > 0 \lor \lvert \mu - \lambda'_i \rvert < \varepsilon_1$.
If for every $\mu$ there is a $j$ with $\lvert \mu - \lambda'_j \rvert < \varepsilon_1$, then we are done.
We define $\varepsilon = \varepsilon_1$, since then $\varepsilon \leq \varepsilon[\Lambda_0]$ and the $\varepsilon$-eigenvectors corresponding to $\Lambda_0$ are all independent.
Suppose there are $\mu$ for which this is not the case, so $\mu$ that are apart from every element of $\Lambda_0$.
Then define $\Lambda_1$ as the union of $\Lambda_0$ and these values $\mu$ and let $X_1$ be those $\mu$ of $X_0$ that remain.
Now $\Lambda_1$ again consists of pairwise apart eigenvalues of $A$.
We again find $\varepsilon[\Lambda_1] > 0$ this way such that the $\delta$-eigenvectors corresponding to the eigenvalues in $\Lambda_1$ are independent for all $0 < \delta < \varepsilon[\Lambda_1]$.
Likewise, we define $\varepsilon_2 = \min(\varepsilon_1, \varepsilon[\Lambda_1])$ and again see for all $\mu \in X_1$ whether they are closer than $\varepsilon_2$ to some element of $\Lambda_1$, or apart from all elements of $\Lambda_1$.
If all $\mu \in X_1$ fall in the first category we define $\varepsilon = \varepsilon_2$ and we are done, otherwise we construct $\Lambda_2$, $X_2$, $\varepsilon_3 < \varepsilon_2$, $\varepsilon_3$-eigenvectors corresponding to $\Lambda_2$, etc. 
This construction must end at some point.
For any $k$, the set $\Lambda_{k + 1}$ is strictly greater than $\Lambda_{k}$ and we know that any such set can contain at most $n$ elements.
Thus we must find an $\varepsilon = \varepsilon_k$ with $k \leq n$ satisfying our requirements.
\end{proof}
\end{theorem}
\clearpage

\bibliographystyle{alpha}
\bibliography{cla}

@article{CAoEPiCuNU,
    author = "Pavel Osinenko and Grigory Devadze and Stefan Streif",
    title = "Constructive Analysis of Eigenvalue Problems in Control under Numerical Uncertainty",
    journal = "Control Theory and Applications",
    year = 2020,
    volume = "18",
    pages = "2177-–2185"
}

@article{CtDoLS,
    author = "Martin Ziegler and Vasco Brattka",
    title = "Computing the Dimension of Linear Subspaces",
    journal = "SOFSEM 2000: Theory and Practice of Informatics",
    year = 2000,
    pages = "450-–458"
}

@article{DoCMaaRoI,
    author  = "Andre Scedrov",
    title   = "Diagonalization of Continuous Matrices as a Representation of Intuitionistic Reals",
    year    = 1986,
    journal = "Annals of Pure and Applied Logic",
    volume  = "30",
    pages   = "201--206"
}

@article{EAoDMoMI,
    author  = "J. H. Wilkinson",
    title   = "Error Analysis of Direct Methods of Matrix Inversion" ,
    year    = 1961,
    journal = "Journal of the ACM" ,
    volume  = "8",
    number  = "3",
    pages   = "281--330"
}

@article{UuIA,
    author  = "Arend Heyting",
    title   = "Untersuchungen {\"u}ber intuitionistische algebra",
    year    = 1941,
    journal = "Verhandelingen der Nederlandsche Akademie van Wetenschappen",
    volume  = "18",
    number  = "2",
    pages   = "1--36"
}

@article{SoSOiCA,
  author    = "Douglas Bridges and Hajime Ishihara",
  title     = "Spectra of selfadjoint operators in constructive analysis",  year      = 1996,
  journal   = "Indagationes Mathematicae",
  volume    = "7",
  number    = "1",
  pages     = "11--35",

}

@book{ACiCA,
    author  = "Ray Mines and Fred Richman and Wim Ruitenburg",
    title   = "A Course in Constructive Algebra",
    year      = 1988,
    publisher = "Springer",
    address   = "New York"
}

@book{CA,
    author  = "Errett Bishop and Douglas Bridges",
    title   = "Constructive Analysis",
    year      = 1985,
    publisher = "Springer",
    address   = "New York"
}

@book{CiM2,
    author  = "A.S. Troelstra and D. van Dalen",
    title   = "Constructivism in Mathematics: An Introduction. Volume 2",
    year      = 1988,
    publisher = "Elsevier",
    address   = "Amsterdam"
}

@book{ToCA,
    author  = "Douglas S. Bridges and Luminita Simona Vita",
    title   = "Techniques of Constructive Analysis",
    year      = 2006,
    publisher = "Springer",
    address   = "New York"
}

@book{VoCM,
    author  = "Douglas S. Bridges and Fred Richman",
    title   = "Varieties of Constructive Mathematics",
    year      = 1987,
    publisher = "Cambridge University Press",
    address   = "Cambridge"
}

@incollection{FoITitEoNaMC,
  author       = {Vasco Brattka and Martin Ziegler},
  title        = {Computability of Linear Equations},
  booktitle    = {Foundations of Information Technology in the Era of Network and Mobile Computing},
  publisher    = {Springer},
  year         = {2002},
  pages        = {95--106},
  address      = {New York}
}

@incollection{MUCaP,
  author       = {Josef Berger and Gregor Svindland},
  title        = {Constructive Proofs of Negated Statements},
  booktitle    = {Mathesis Universalis, Computability and Proof },
  publisher    = {Springer},
  year         = {2019},
  pages        = {47--53},
  address      = {New York}
}

@phdthesis{CRM,
  author       = {Hannes Diener},
  title        = {Constructive Reverse Mathematics},
  school       = {Universität Siegen},
  year         = {2018},
  type         = {Habilitationsschrift},
  address      = {Siegen, Germany}
}

@phdthesis{HSiI,
  title        = "Hilbert Spaces in Intuitionism",
  author       = "Ashwini Kumar",
  year         = 1966,
  school       = "Universiteit van Amsterdam",
  type         = {PhD thesis}
}

@phdthesis{InA,
  title        = "Intuitionistic Algebra",
  author       = "Wim Ruitenburg",
  year         = 1982,
  school       = "Universiteit Utrecht",
  type         = {PhD thesis}
}

\end{document}